\theoremstyle{plain}
\newtheorem{thm}{Theorem}
\newtheorem{lem}[thm]{Lemma}
\newtheorem{prop}[thm]{Proposition}
\theoremstyle{definition}
\theoremstyle{definition}
\newtheorem{defi}[thm]{Definition}
\newcommand{\R}{\mathbb{R}}
\newcommand{\Z}{\mathbb{Z}}
\newcommand{\N}{\mathbb{N}}
\renewcommand{\H}{\mathbb{H}}
\newcommand{\C}{\mathbb{C}}
\newcommand*{\rom}[1]{\romannumeral}
\renewcommand{\Re}{{Re}}
\DeclareMathOperator{\fl}{flat}
\DeclareMathOperator{\SL}{SL}
\DeclareMathOperator{\GL}{GL}
\DeclareMathOperator{\SO}{SO}
\DeclareMathOperator{\Spin}{Spin}
\DeclareMathOperator{\Tr}{Tr}
\DeclareMathOperator{\tr}{tr}
\DeclareMathOperator{\End}{End}
\DeclareMathOperator{\Vol}{Vol}
\DeclareMathOperator{\rank}{rank}
\DeclareMathOperator{\Id}{Id}
\DeclareMathOperator{\Ad}{Ad}
\DeclareMathOperator{\ad}{ad}
\DeclareMathOperator{\T}{T}
\DeclareMathOperator{\Norm}{Norm}
\DeclareMathOperator{\cent}{Centr}
\DeclareMathOperator{\spec}{spec}
\DeclareMathOperator{\diam}{diam}
\DeclareMathOperator{\prim}{prime}
\DeclareMathOperator{\Ind}{Ind}
\numberwithin{equation}{section}
\numberwithin{thm}{section}
\title{\huge{Ruelle and Selberg zeta functions for non-unitary twists}}
\author{Polyxeni Spilioti}
\date{}
\begin{document}
\maketitle

\textbf{Abstract}. In this paper, we study the Selberg and Ruelle zeta functions on compact hyperbolic odd 
dimensional manifolds. These zeta functions are defined on one complex variable $s$ in some right half-plane of $\C$.
We use the Selberg trace formula for arbitrary not neccesarily unitary
representations of the fundamental group to establish the meromorphic continuation 
of these zeta functions to the whole complex plane.
 
\textit{Keywords}: Selberg zeta function, Ruelle zeta function, Selberg trace formula.

\begin{center}
\section{\textnormal{Introduction}}
\end{center}

The Selberg and Ruelle zeta functions are dynamical zeta functions, which can be associated with the geodesic flow on the unit sphere bundle $S(X)$ of  
a compact hyperbolic riemannian manifold $X$. Namely, they provide information about the lenghts of the closed geodesics, also 
called length spectrum. If we consider the geodesic flow $\phi$ on $S(X)$ and $d\phi$ its differential, then $\phi$
has the Anosov property, i.e.,
there exists a $d\phi_{t}$-invariant continuous splitting 
\begin{equation*}
 TS(X)=E^{s}\oplus E^{c}\oplus E^{u},
\end{equation*}
where $E^{s},E^{u}$ consist of vectors that shrink, respectively expand exponentially,
and $E^{c}$ is the one dimensional subspace of vectors tangent to the flow, with respect to the riemannian metric, as $t\rightarrow\infty$.

These zeta functions can be represented by Euler products, which converge
in some right half-plane of $\C$. The main goal of this paper is to prove the meromorphic continuation of these functions to the whole complex plane. 
The Ruelle zeta function associated with the geodesic flow on the unit sphere bundle of 
a closed manifold with $C^{\omega}$-riemannian metric of negative curvature has been studied
by Fried in \cite{Friedmero}. It is defined by
\[
R(s)=\prod_{\gamma}(1-e^{-sl(\gamma)}),
\]
where $\gamma$ runs over all the prime closed geodesics
and $l(\gamma)$ denotes the length of $\gamma$.
In \cite[Corollary, p.180]{Friedmero}, it is proved that it 
admits a meromorphic continuation to the whole complex plane.

We consider a compact hyperbolic manifold $X$ of odd dimension $d$, obtained as follows.
Let $G=\SO^{0}(d,1)$ and $K=\SO(d)$. Then, $K$ is a maximal compact subgroup of $G$.
Let $\widetilde{X}:=G/K$. $\widetilde{X}$ can be equipped with a $G$-invariant metric, which is unique up to scaling 
and is of constant negative curvature.
If we normalize this metric such that it has constant negative curvature $-1$, then 
$\widetilde{X}$, equipped with this metric, is isometric to $\H^{d}$.
Let $\Gamma$ be a discrete torsion-free subgroup of $G$ such that $\Gamma\backslash G$ is compact. 
Then $\Gamma$ acts by isometries on $\widetilde X$ and $X=\Gamma\backslash \widetilde X$ is a
compact oriented hyperbolic manifold of dimension $d$. 
This is a case of a locally symmetric space of non-compact type of real rank 1. This means that in the Iwasawa
decomposition $G=KAN$, $A$ is a multiplicative torus of dimension 1, i.e.,
$A\cong\R^+$. 

For a given $\gamma\in\Gamma$ we denote by $[\gamma]$ the $\Gamma$-conjugacy class
of $\gamma$. If $\gamma\neq e$, then there is a unique closed geodesic 
$c_{\gamma}$ associated with $[\gamma]$. Let $l(\gamma)$ denote the length of
$c_{\gamma}$.
The conjugacy class $[\gamma]$ is called prime if 
there exist no $k\in\N$ with $k>1$ and $\gamma_{0}\in\Gamma$ such that $\gamma=\gamma_{0}^{k}$.
The prime geodesics correspond to the prime conjugacy classes and are those 
geodesics that trace out their image exactly once. 

Let $M$ be the centralizer of $A$ in $K$. We define the zeta functions associated
with unitary irreducible representations $\sigma$ of $M$ and 
finite dimensional representations $\chi$ of $\Gamma$.
Let $\chi\colon\Gamma\to \GL(V_{\chi})$ be a finite dimensional representation of $\Gamma$. Let
$\sigma\in\widehat M$. 
Then the twisted Selberg zeta function $Z(s;\sigma,\chi)$ is defined by the infinite product
\begin{equation*}
Z(s;\sigma,\chi):=\prod_{\substack{[\gamma]\neq e,\\ [\gamma]\prim}} \prod_{k=0}^{\infty}\det\Big(\Id-\big(\chi(\gamma)\otimes\sigma(m_\gamma)\otimes S^k(\Ad(m_\gamma a_\gamma)_{\overline{\mathfrak{n}}})\big)e^{-(s+|\rho|)l(\gamma)}\Big),
\end{equation*}
where $s\in \C$, $\overline{\mathfrak{n}}$ is the sum of the negative root spaces of the system $(\mathfrak{g},\mathfrak{a})$
and $S^k(\Ad(m_\gamma a_\gamma)_{\overline{\mathfrak{n}}})$ denotes the $k$-th
symmetric power of the adjoint map $\Ad(m_\gamma a_\gamma)$ restricted to $\overline{\mathfrak{n}}$.

The twisted Ruelle zeta function $R(s;\sigma,\chi)$ is defined by the infinite product
\begin{equation*}
 R(s;\sigma,\chi):=\prod_{\substack{[\gamma]\neq{e}\\ [\gamma]\prim}}\det(\Id-\chi(\gamma)\otimes\sigma(m_{\gamma})e^{-sl(\gamma)})^{(-1)^{d-1}}.
\end{equation*}
Both $Z(s;\sigma,\chi)$, $R(s;\sigma,\chi)$ converge absolutely and uniformly on compact subsets of some half-plane of $\C$. 

In our case, where $X=\Gamma/\H^{d}$, 
the dynamical zeta functions 
are twisted by a representation $\chi$ of $\Gamma$.
For unitary representations of $\Gamma$,
these zeta functions have been studied by Fried (\cite{Fried}) and Bunke and Olbrich (\cite{BO}). 
In \cite[Theorem 1]{Fried}, Fried proved that the Ruelle zeta function on a closed oriented hyperbolic manifold associated with an
acyclic orthogonal representation of the fundamental group admits a meromorphic continuation to the whole complex plane 
and furthermore the 
absolute value of the Ruelle zeta function 
evaluated at zero equals the Ray-Singer
analytic torsion as it is introduced in \cite{RS}. Apart from the fact that this is of great importance, since it 
provides a connection between the geometry (geodesic flow expressed by the Ruelle zeta function) of the manifold and a 
spectral invariant (analytic torsion), the theorem of Fried gave rise to other applications in the field of spectral geometry,
see for example \cite{MMar}, \cite{M2}.
In \cite{M2}, M\"{u}ller considered the asymptotic behavior of the analytic torsion of a closed hyperbolic $3$-manifold $X$,
associated with the $m$-th symmetric power of the standard representation $\tau_{m}$ of the group $\SL(2,\C)$.
In particular, he proved that for a closed hyperbolic oriented $3$-manifold,
\begin{equation*}
 -\log T_{X}(\tau_{m})=\frac{\Vol(X)}{4\pi}m^2+O(m),
\end{equation*}
as $m\rightarrow \infty$ (\cite[Theorem 1.1]{M2}).
In order to prove this theorem, he used the expression of the analytic torsion in terms of the Ruelle zeta function attached 
to $\tau_{m}$ and based on the results of Wotzke (\cite{Wo}). 
In his thesis, Wotzke generalized the theorem 
of Fried to the case of the induced representation $\tau|_{\Gamma}$ of $\Gamma$
arising from a restriction of a finite dimensional complex representation $\tau\colon G\rightarrow \GL(V)$ of $G$. 
By \cite[Proposition 3.1]{MM}, there exists an isomorphism between the locally homogenous vector bundle $E_{\tau}$ over $X$ 
associated with $\tau|_{K}$  and the flat vector bundle $E_{\fl}$ over $X$
associated with $\tau|_{\Gamma}$. i.e.,
\begin{equation*}
 \Gamma\backslash(G/K\times V)\cong (\Gamma\backslash G\times V)/K.
\end{equation*}
Once this isomorphism is obtained, a hermitian 
fiber metric in $E_{\tau}$ (\cite[Lemma 3.1]{MM}) descends to a fiber metric in $E_{\fl}$. Therefore, the Laplace operator
associated with $\tau|_{\Gamma}$ is a formally self-adjoint operator and the whole harmonic analysis on locally symmetric 
spaces is provided.

On the other hand, Bunke and Olbrich in \cite{BO} consider a unitary representation of the fundamental group 
for all the cases of compact locally symmetric spaces of real rank 1 and of non-compact type, i.e.,
the compact manifolds whose universal coverings are the real, complex, or quaternionic  hyperbolic space, or the Cayley plane.
Using the Selberg trace formula as main tool for wave operators induced by certain Laplace-type and Dirac operators, they proved that the
Selberg and Ruelle zeta functions admit a meromorphic continuation to the whole complex and furthermore satisfy 
functional equations.

Our main aim is to generalize the results of Bunke and Olbrich to the case of non-unitary representations of 
the fundamental group.
Contrary to the setting of Wotzke, we can no longer use the results in \cite{MM},
since we treat the case of an arbitrary finite dimensional representation $\chi\colon\Gamma\to \GL(V_{\chi})$ of $\Gamma$. 
Let $E_{\chi}$ be the associated to $\chi$ flat vector bundle over $X$, equipped with a flat connection $\nabla^{\chi}$.
In general, there is no hermitian metric $h^{\chi}$, which is compatible with the 
flat connection. To overcome this problem we use the flat Laplacian, first introduced by M\"{u}ller in \cite{M1}.
In fact, we use a more general operator, the twisted Bochner-Laplace operator defined as follows.

Let $\tau\colon K\rightarrow \GL(V_{\tau})$ be a complex finite dimensional unitary representation of $K$. Let $\widetilde{E}_{\tau}:=G\times_{\tau}V_{\tau}\rightarrow \widetilde{X}$ 
be the associated homogenous vector bundle over $\widetilde{X}$.
Let $E_{\tau}:=\Gamma\backslash(G\times_{\tau}V_{\tau})\rightarrow X$ be the locally homogenous vector bundle over $X$.
Let $\Delta_{\tau}$ be the Bochner-Laplace operator associated with $\tau$.
We define the operator $\Delta^{\sharp}_{\tau,\chi}$ acting on $C^{\infty}(X,E_{\tau}\otimes E_{\chi})$.
Locally, it can be described as
\begin{equation*}
 \widetilde{\Delta}^{\sharp}_{\tau,\chi}=\widetilde{\Delta}_{\tau}\otimes\Id_{V_{\chi}}, 
\end{equation*}
where $\widetilde{\Delta}^{\sharp}_{\tau,\chi}$ and $\widetilde{\Delta}_{\tau}$  
are the lifts to $\widetilde{X}$ of 
$\Delta^{\sharp}_{\tau,\chi}$ and $\Delta_{\tau}$, respectively. 
Our operator is not self-adjoint anymore.
However, it has the same principal symbol as $\Delta_{\tau}$ 
and hence has nice spectral properties, i.e., the spectrum of 
${\Delta}^{\sharp}_{\tau,\chi}$ is a discrete subset of a positive cone in $\C$. 
We consider the corresponding heat semi-group $e^{-t\Delta^{\sharp}_{\tau,\chi}}$ acting on the space
of smooth sections of the vector bundle $E_{\tau}\otimes E_{\chi}$.
By \cite[Lemma 2.4]{M1}, it is an integral operator with smooth kernel.
Hence, we can consider the trace of the
operator $e^{-t\Delta^{\sharp}_{\tau,\chi}}$ and derive a corresponding trace formula. 

We already associated the Selberg and Ruelle zeta functions with irreducible representations $\sigma$ of $M$. These representations are chosen precisely to be
the representations arising from restrictions of representations of $K$.
The keypoint of the proof of the meromorphic continuation of the Selberg zeta function is the Selebrg trace formula for the operator
 $e^{-tA_{\tau,\chi}^{\sharp}(\sigma)}$, 
 where $A_{\tau,\chi}^{\sharp}(\sigma)$ is the twisted differential operator associated with 
 $\sigma\in \widehat{M}$ and induced by $\Delta^{\sharp}_{\tau,\chi}$.

\begin{thm}
	For every $\sigma \in \widehat{M}$ we have
	\begin{align*} \label{f:trace formula 1}
		\Tr(e^{-tA_{\chi}^{\sharp}(\sigma)})=&\dim(V_{\chi})\Vol(X)\int_{\R}e^{-t\lambda^{2}}P_{\sigma}(i\lambda)d\lambda\\
		&+\sum_{[\gamma]\neq e} \frac{l(\gamma)}{n_{\Gamma}(\gamma)}L_{sym}(\gamma;\sigma)
		\frac{e^{-l(\gamma)^{{2}}/4t}}{(4\pi t)^{1/2}}, 
	\end{align*} 
	where 
	\begin{equation*}
		L_{sym}(\gamma;\sigma)= \frac{\tr(\sigma(m_{\gamma})\otimes\chi(\gamma))e^{-|\rho|l(\gamma)}}{\det(\Id-\Ad(m_{\gamma}a_{\gamma}))_{\overline{\mathfrak{n}}}}.
	\end{equation*}
\end{thm}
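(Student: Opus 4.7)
The plan is to adapt the derivation of the Selberg trace formula to the twisted and non-unitary setting, leveraging the fact that the twist enters only tensorially at the level of the universal cover. First I would pass to $\widetilde{X}$: since $\widetilde{\Delta}^{\sharp}_{\tau,\chi}=\widetilde{\Delta}_{\tau}\otimes\Id_{V_{\chi}}$ locally, the heat kernel of the lifted operator $\widetilde{A}^{\sharp}_{\chi}(\sigma)$ factors as $H_{t}^{\sigma}\otimes\Id_{V_{\chi}}$, where $H_{t}^{\sigma}$ is the heat kernel of the classical twisted operator $A_{\tau}(\sigma)$ on $\widetilde{X}$, which is a right $K$-equivariant function on $G$ whose spectral behaviour is controlled by the Harish-Chandra Plancherel density $P_{\sigma}$. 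Viewing $H_{t}^{\sigma}$ as a function on $G$, the heat kernel of $e^{-tA^{\sharp}_{\chi}(\sigma)}$ on $X$ is then the usual $\Gamma$-periodization twisted by $\chi$,
\begin{equation*}
K_{t}(x,y)=\sum_{\gamma\in\Gamma}\chi(\gamma)\otimes H_{t}^{\sigma}(x^{-1}\gamma y),
\end{equation*}
and the trace is obtained by integrating $\tr K_{t}(x,x)$ over a fundamental domain for $\Gamma$.

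Next I would interchange the trace integral and the $\Gamma$-sum, group the terms by conjugacy classes, and evaluate the two families of contributions separately. The identity class gives $\Vol(X)\cdot\tr\chi(e)\cdot H_{t}^{\sigma}(e)=\dim(V_{\chi})\Vol(X)H_{t}^{\sigma}(e)$, which by the Plancherel theorem on $G$ yields exactly $\dim(V_{\chi})\Vol(X)\int_{\R}e^{-t\lambda^{2}}P_{\sigma}(i\lambda)\,d\lambda$. For a non-identity class $[\gamma]$, the inner integral reduces via the standard unfolding over $\Gamma_{\gamma}\backslash G$ (with $\Gamma_{\gamma}=\gamma_{0}^{\Z}$ and $\gamma=\gamma_{0}^{n_{\Gamma}(\gamma)}$) to the orbital integral of $H_{t}^{\sigma}$ at $\gamma$, which for a hyperbolic element $\gamma=m_{\gamma}a_{\gamma}$ in real rank one is a standard computation: the integration transverse to the geodesic produces the Jacobian $\det(\Id-\Ad(m_{\gamma}a_{\gamma}))_{\overline{\mathfrak{n}}}$ in the denominator, the integration along $A$ against the one-dimensional Gaussian factor produces $\frac{e^{-l(\gamma)^{2}/4t}}{(4\pi t)^{1/2}}$, the length of the primitive geodesic gives $\frac{l(\gamma)}{n_{\Gamma}(\gamma)}$, and the evaluation of the $K$-type component produces $\tr\sigma(m_{\gamma})$. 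Because $\chi$ is a representation, $\tr\chi(\gamma)$ is a class function, so the geometric side assembles exactly into the stated $L_{sym}(\gamma;\sigma)$.

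The main obstacle I expect is not the algebraic bookkeeping but the analytic justification in the absence of a compatible Hermitian metric: one must show that $e^{-tA^{\sharp}_{\chi}(\sigma)}$ is of trace class, that the pre-trace formula converges absolutely, and that the sum–integral interchange is legitimate. For this I would exploit the fact, emphasized in the introduction, that $\Delta^{\sharp}_{\tau,\chi}$ has the same principal symbol as the self-adjoint operator $\Delta_{\tau}\otimes\Id_{V_{\chi}}$, so that Duhamel-type estimates give the smoothing and trace-class properties needed. Absolute convergence of the geometric side then follows from the Gaussian factor $e^{-l(\gamma)^{2}/4t}$, which dominates the at-most-exponential growth of $\|\chi(\gamma)\|$ in the word length (hence in $l(\gamma)$) on the finitely generated group $\Gamma$. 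Once these analytic points are in place, the identification of the spectral and geometric sides proceeds as in the unitary case treated by Bunke-Olbrich, the only novelty being that $\tr\chi(\gamma)$, rather than a unitary character, appears in the weights.
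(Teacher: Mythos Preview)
Your outline is essentially the paper's own: pass to $\widetilde{X}$ where the twist acts as $\Id_{V_\chi}$, periodize the untwisted kernel with the weight $\chi(\gamma)$, split into identity and hyperbolic classes, and cite \cite{M1} (or Duhamel comparison) for the trace-class property. Two points are worth sharpening.

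First, you treat $A^{\sharp}_{\chi}(\sigma)$ as if it came from a single $K$-type, but in the paper it is a $\Z$-graded virtual operator: one writes $\sigma=\sum_{\tau\in\widehat K} m_\tau(\sigma)\,i^*(\tau)$ via the branching result of Bunke--Olbrich (Proposition~5.4) and sets $A^{\sharp}_{\chi}(\sigma)=\bigoplus_{m_\tau(\sigma)\neq 0} A^{\sharp}_{\tau,\chi}+c(\sigma)$, with the trace taken with signs $m_\tau(\sigma)$. Your claim that ``evaluation of the $K$-type component produces $\tr\sigma(m_\gamma)$'' is exactly the statement that this signed sum collapses to the single $\sigma$-term, and it is not automatic: in the paper it is the character computation $\Theta_{\sigma',\lambda}(q_t^{\sigma})=e^{tc(\sigma)}e^{-t\lambda^2}[\sigma:\sigma']$ (equations (5.32)--(5.36)) that does the work.

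Second, the paper does not compute the hyperbolic orbital integrals geometrically as you sketch (Jacobian in the transverse directions, Gaussian along $A$). Instead it invokes Wallach's orbital integral expansion, which expresses $\int_{G_\gamma\backslash G}h_t^\sigma(g^{-1}\gamma g)\,dg$ as a sum over $\widehat M$ of $\overline{\tr\sigma'(m_\gamma)}$ times the inverse Fourier transform of $\Theta_{\sigma',\lambda}(q_t^\sigma)$; the branching identity above then singles out $\sigma'=\sigma$ and the inverse Fourier transform of $e^{-t\lambda^2}$ gives the Gaussian $(4\pi t)^{-1/2}e^{-l(\gamma)^2/4t}$. Your direct computation would also work and is arguably more transparent, but it is a different execution of the same step.
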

Our main results are stated in the following theorems.
\begin{thm}
	The Selberg zeta function $Z(s;\sigma,\chi)$ admits a meromorphic continuation to the whole complex plane $\C$. The set of the singularities
	equals $\{s_{k}^{\pm}=\pm i \sqrt{t_{k}}:t_{k}\in \spec(A^{\sharp}_{\chi}(\sigma)), k\in\N\}$.
	The orders of the singularities are equal to $m(t_{k})$, where $m(t_{k})\in\N$ denotes the algebraic multiplicity of the eigenvalue $t_{k}$.
	For $t_{0}=0$, the order of the singularity $s_{0}$ is equal to $2m(0)$.
\end{thm}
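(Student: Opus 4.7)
The plan is to equate the logarithmic derivative $Z'/Z$ with the Laplace transform in $t$ of the hyperbolic side of the trace formula of the preceding theorem, and then read off the poles of $Z'/Z$ from the spectral side. First I would expand $\log Z(s;\sigma,\chi)$ via $\log\det(\Id-A)=-\sum_{n\ge1}\tr(A^n)/n$, apply the identity $\sum_{k\ge0}\tr(S^k T)=\det(\Id-T)^{-1}$ with $T=\Ad(m_\gamma a_\gamma)^n|_{\overline{\mathfrak{n}}}$ to collapse the symmetric-power product, and regroup primitive powers into a single sum over all $[\gamma]\neq e$, so as to obtain for $\Re(s)$ large
\begin{equation*}
    \frac{Z'(s;\sigma,\chi)}{Z(s;\sigma,\chi)}=\sum_{[\gamma]\neq e}\frac{l(\gamma)}{n_\Gamma(\gamma)}L_{sym}(\gamma;\sigma)\,e^{-sl(\gamma)}.
\end{equation*}
Inserting the Gaussian identity $e^{-sl(\gamma)}=2s\int_0^\infty\frac{e^{-l(\gamma)^2/4t}}{\sqrt{4\pi t}}e^{-s^2 t}\,dt$ term by term and recognizing the resulting series, by the previous theorem, as $\Tr e^{-tA_\chi^\sharp(\sigma)}-\dim(V_\chi)\Vol(X)\int_\R e^{-t\lambda^2}P_\sigma(i\lambda)\,d\lambda$, I rewrite $Z'/Z$ as $2s$ times this Laplace transform.

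Next I would split $\int_0^\infty=\int_0^1+\int_1^\infty$ and treat each piece separately. For the tail, discreteness of $\spec(A_\chi^\sharp(\sigma))$ in a positive cone together with Weyl-type bounds on the multiplicities $m(t_k)$ allow term-by-term integration, giving
\begin{equation*}
    \int_1^\infty \Tr e^{-tA_\chi^\sharp(\sigma)}\,e^{-s^2 t}\,dt=\sum_k \frac{m(t_k)\,e^{-(t_k+s^2)}}{t_k+s^2},
\end{equation*}
a meromorphic function of $s^2$ with simple poles at $s^2=-t_k$. For the head, the hyperbolic part of the trace formula $\sum_{[\gamma]\neq e}(\cdots)e^{-l(\gamma)^2/4t}$ is exponentially small as $t\to 0^+$, so $\int_0^1(\Tr e^{-tA_\chi^\sharp(\sigma)}-I(t))e^{-s^2 t}\,dt$ is entire in $s$. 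Finally, the Plancherel piece $\int_0^\infty I(t)e^{-s^2 t}\,dt$ is $\dim(V_\chi)$ times its untwisted counterpart and inherits its meromorphic continuation to $\C$ from the unitary analysis of Bunke and Olbrich.

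Assembling the three contributions yields
\begin{equation*}
    \frac{1}{2s}\frac{Z'(s;\sigma,\chi)}{Z(s;\sigma,\chi)}=\sum_k \frac{m(t_k)}{t_k+s^2}+H(s),
\end{equation*}
with $H$ meromorphic on $\C$. A simple pole of the sum at $s^2=-t_k$ translates, after multiplying by $2s$ and integrating once, into a singularity of $Z$ of order $m(t_k)$ at each $s_k^\pm=\pm i\sqrt{t_k}$ with $t_k\neq 0$. For the eigenvalue $t_0=0$, the summand $m(0)/s^2$ combines with the $2s$ prefactor into a contribution $2m(0)/s$ to $Z'/Z$, so $Z$ acquires a singularity of order $2m(0)$ at $s=0$.

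The hard part is the spectral analysis of the non-self-adjoint operator $A_\chi^\sharp(\sigma)$: justifying the term-wise interchange of sum and integral above and the convergence of $\sum_k m(t_k)/(t_k+s^2)$ requires a Weyl-type estimate and resolvent control in the non-self-adjoint setting, both of which must be extracted from the fact that $A_\chi^\sharp(\sigma)$ has the same principal symbol as the self-adjoint $\Delta_\tau$ and a spectrum confined to a positive cone in $\C$.
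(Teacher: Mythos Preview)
Your strategy shares the same core identities as the paper (the formula for $L(s)=Z'/Z$, the Gaussian transform, the trace formula), but the regularization step is genuinely different. The paper, following Bunke--Olbrich, never writes a bare Laplace transform of $\Tr e^{-tA^\sharp_\chi(\sigma)}$. Instead it introduces $N>d/2$ auxiliary parameters $s_1,\dots,s_N$, forms the trace-class product $\prod_{i=1}^N(A^\sharp_\chi(\sigma)+s_i^2)^{-1}$, and expands its trace via the generalized resolvent identity (Lemma~6.1) and the trace formula; freezing $s_2,\dots,s_N$ and letting $s_1=s$ vary then isolates $L(s)$ up to a holomorphic remainder. The advantage is that all convergence is packaged into the single trace-class statement for the $N$-fold product, so the divergent series $\sum_k m(t_k)/(t_k+s^2)$ never appears on its own and no separate Weyl bound has to be proved in the non-self-adjoint setting.

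Your direct $\int_0^1+\int_1^\infty$ splitting is a legitimate alternative, but two points in the write-up need repair. First, the bookkeeping: having already subtracted $I(t)$ inside the head integral $\int_0^1(\Tr-I)e^{-s^2t}\,dt$, the remaining Plancherel contribution is $-\int_1^\infty I(t)e^{-s^2t}\,dt$, not $-\int_0^\infty$. The latter is divergent at $t=0$ since $I(t)\sim c\,t^{-d/2}$, and no appeal to the unitary Bunke--Olbrich analysis rescues it; the tail $\int_1^\infty I(t)e^{-s^2t}\,dt$, on the other hand, is a finite combination of incomplete gamma functions and extends meromorphically with at most a pole at $s=0$, so nothing external is needed. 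Second, your assembled identity must retain the exponential damping: the tail of the trace yields $\sum_k m(t_k)\,e^{-(t_k+s^2)}/(t_k+s^2)$, which converges because $\Re(t_k)\to+\infty$ in the cone, whereas the undamped sum you display diverges for $d\ge3$. The residues are unaffected, since the damping factor equals $1$ at each pole $s^2=-t_k$. With these two corrections your argument goes through; it trades the paper's algebraic resolvent trick for explicit small-$t$ asymptotics of the hyperbolic term together with a tail bound from the spectral cone condition and Lidskii's theorem.
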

\begin{thm}
	For every $\sigma\in\widehat{M}$, the Ruelle zeta function $R(s;\sigma,\chi)$ admits a meromorphic continuation to the whole complex plane $\C$.
\end{thm}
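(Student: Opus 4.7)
The plan is to reduce the claim to the already-established meromorphic continuation of the Selberg zeta function (Theorem~1.2) via the closed-form factorization
\begin{equation*}
R(s;\sigma,\chi)=\prod_{p=0}^{d-1}Z\bigl(s+p-|\rho|;\,\sigma\otimes\Lambda^{p}\overline{\mathfrak{n}},\,\chi\bigr)^{(-1)^{p}},
\end{equation*}
understood as an identity on the common right half-plane of absolute convergence of the two Euler products. Since $M$ is compact, each $M$-module $\sigma\otimes\Lambda^{p}\overline{\mathfrak{n}}$ splits as a finite direct sum $\bigoplus_{i}m_{p,i}\sigma_{p,i}$ of irreducibles $\sigma_{p,i}\in\widehat{M}$, so the factor $Z(s+p-|\rho|;\sigma\otimes\Lambda^{p}\overline{\mathfrak{n}},\chi)$ equals the finite product $\prod_{i}Z(s+p-|\rho|;\sigma_{p,i},\chi)^{m_{p,i}}$. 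Applying Theorem~1.2 to each $\sigma_{p,i}$ gives meromorphic continuation of every factor to $\C$, and $R(s;\sigma,\chi)$ is then a finite product of meromorphic functions and their reciprocals, hence meromorphic on $\C$.

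To establish the factorization I compare logarithmic derivatives on the half-plane of absolute convergence. Expanding $\log\det(\Id-B)=-\sum_{j\geq 1}\tr(B^{j})/j$ in each factor of the Selberg Euler product and using the generating-function identity $\sum_{k\geq 0}\tr S^{k}(B)=1/\det(\Id-B)$, which applies because $\Ad(a_{\gamma})$ contracts $\overline{\mathfrak{n}}$ by the factor $e^{-l(\gamma)}<1$, collapses the symmetric-power summation and yields
\begin{equation*}
\partial_{s}\log Z(s;\sigma',\chi)=\sum_{\substack{[\gamma]\neq e\\ [\gamma]\,\prim}}\sum_{j\geq 1}\frac{l(\gamma)\tr(\chi(\gamma^{j}))\tr(\sigma'(m_{\gamma}^{j}))e^{-j(s+|\rho|)l(\gamma)}}{\det\bigl(\Id-\Ad(m_{\gamma}^{j}a_{\gamma}^{j})|_{\overline{\mathfrak{n}}}\bigr)}.
\end{equation*}
Specialising to $\sigma'=\sigma\otimes\Lambda^{p}\overline{\mathfrak{n}}$, shifting $s\mapsto s+p-|\rho|$, and taking the alternating sum over $p=0,\ldots,d-1$, the emergent factor $\sum_{p=0}^{d-1}(-1)^{p}e^{-jpl(\gamma)}\tr\Lambda^{p}(\Ad(m_{\gamma}^{j})|_{\overline{\mathfrak{n}}})$ equals $\det(\Id-\Ad(m_{\gamma}^{j}a_{\gamma}^{j})|_{\overline{\mathfrak{n}}})$ by the exterior-power expansion of the determinant, cancelling the denominator exactly. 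What remains is $\sum_{[\gamma]\,\prim}\sum_{j\geq 1}l(\gamma)\tr(\chi(\gamma^{j}))\tr(\sigma(m_{\gamma}^{j}))e^{-jsl(\gamma)}$, which is precisely $\partial_{s}\log R(s;\sigma,\chi)$ (the sign $(-1)^{d-1}$ in the Ruelle definition is $+1$ since $d$ is odd). Since both sides tend to $1$ as $\Re(s)\to+\infty$ on the half-plane of absolute convergence, the constant of integration vanishes and the factorization follows.

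The main technical obstacle lies in securing a common right half-plane of absolute convergence of all $d$ shifted Selberg Euler products together with the Ruelle Euler product, in the presence of the non-unitary twist $\chi$: one needs a polynomial bound on $\|\chi(\gamma)\|$ in terms of $l(\gamma)$ to dominate the Euler products uniformly and to legitimise the interchange of the quadruple sum in $[\gamma]$, $j$, $k$ and $p$ that underlies the identity. Once this analytic bookkeeping is in place, the identity is a purely algebraic consequence of the two generating-function identities for $S^{k}$ and $\Lambda^{p}$ above, and the conclusion of the theorem is immediate from Theorem~1.2.
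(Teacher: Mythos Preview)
Your approach is correct and coincides with the paper's: the paper proves exactly this factorization as Theorem~6.6 and then invokes Theorem~6.5 (which is Theorem~1.2). The only cosmetic difference is that the paper decomposes $\Lambda^{p}\mathfrak{n}_{\C}$ as an $MA$-module and reaches the shifts $s+|\rho|-p$ via the identity $\det(\Id-\Ad(m_{\gamma}a_{\gamma})|_{\overline{\mathfrak{n}}})=(-1)^{d-1}a_{\gamma}^{-2\rho}\det(\Id-\Ad(m_{\gamma}a_{\gamma})|_{\mathfrak{n}})$, whereas you use $\Lambda^{p}\overline{\mathfrak{n}}$ directly and obtain the shifts $s+p-|\rho|$; the two versions match under $p\leftrightarrow d-1-p$. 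One small correction: the bound on $\chi$ that you need, and that the paper provides as Lemma~3.3, is \emph{exponential}, $|\tr\chi(\gamma)|\le K e^{k\,l(\gamma)}$, not polynomial in $l(\gamma)$; a polynomial bound is generally unavailable for non-unitary $\chi$, but the exponential one is exactly what secures the common half-plane of absolute convergence for all the shifted Selberg factors and the Ruelle product.
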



As it is mentioned before, the
operator $A^{\sharp}_{\chi}(\sigma)$ is induced by the twisted Bochner-Laplace operator and therefore has similar nice spectral properties, i.e.,
its spectrum is discrete an contained in a translate of a positive cone in $\C$.
By Theorem 1.2, we observe that the singularities of the Selberg zeta function are located precisely at the points 
$t_{k}$, which belong to the discrete spectrum of the twisted operator $A^{\sharp}_{\chi}(\sigma)$. 
Hence, this theorem provides an interesting relation between the dynamics of hyperbolic manifolds and the spectral theory 
of the twisted Bochner-Laplace operators.

Basic notions and facts from representation theory and the theory of locally symmetric spaces are presented in section 2.
In section 3, we introduce the Selberg and Ruelle zeta functions associated with the geodesic flow of a compact hyperbolic manifold 
and prove that they converge in some right half-plane of $\C$ for an arbitrary finite dimensional representation of $\Gamma$.
Section 4 is devoted to the study of the twisted Bochner-Laplace operator on a compact hyperbolic odd dimensional
manifold $X$ and its spectral properties.
Next, in section 5, we obtain a trace formula for the heat operator, induced by specific twisted Bochner-Laplace operators.
Finally, in section 6, we provide the proof of the meromorphic continuation of the dynamical zeta functions to the whole complex plane,
as it is stated in Theorems 1.2 and 1.3.

\textbf{Acknowledgement.} The present paper is part of the author's PhD thesis and therefore she is 
grateful to her supervisor, Werner M\"{u}ller, for his helpful suggestions, remarks and advices.

\begin{center}
\section{\textnormal{Preliminaries}}
\end{center}

A special case of a locally symmetric space of real rank 1 is a compact hyperbolic locally symmetric manifold with universal 
covering the real hyperbolic space
\begin{equation*}
\H^{d}=\{(x_{1},\ldots,x_{d+1})\in\R^{d+1}:x_{1}^{2}-x_{2}^{2}\ldots-x_{d+1}^{2}=1,x_{1}>0\},
\end{equation*}
where $d=2n+1$, and $n\in\N_{>0}$ is an odd integer.
We consider the universal coverings $G=\Spin(d,1)$ of $\mathrm{SO}^0(d,1)$ and $K=\Spin(d)$ of $\mathrm{SO}(d)$, respectively.
We set  $\widetilde{X}:=G/K$. 
Let $\mathfrak{g},\mathfrak{k}$ be the Lie algebras of $G$ and $K$, respectively. Let $ \mathfrak{g}=\mathfrak{k}\oplus\mathfrak{p}$
be the Cartan decomposition of $\mathfrak{g}$.
We denote by $\Theta$ the Cartan involution of $G$ and $\theta$ be the differential of $\Theta$ at $e_{G}=e$, which is the identity element of $G$.
Let $\mathfrak{a}$ be a Cartan subalgebra of $\mathfrak{p}$, i.e., a maximal abelian subalgebra of $\mathfrak{p}$.
There exists a canonical isomorphism $T_{eK}\cong \mathfrak{p}$.
We consider the subgroup $A$ of $G$ with Lie algebra $\mathfrak{a}$. Let $M:=\cent_{K}(A)$ be the centralizer of $A$ in $K$. 
Then, $M=\Spin(d-1)$ or $M=\SO(d-1)$. Let $\mathfrak{m}$ be its Lie algebra
and $\mathfrak{b}$ a Cartan subalgebra of $\mathfrak{m}$. Let $\mathfrak{h}$ be a Cartan subalgebra of $\mathfrak{g}$.
We consider the complexifications $\mathfrak{g}_{\C}:=\mathfrak{g}\oplus i\mathfrak{g}$, $\mathfrak{h}_{\C}:=\mathfrak{h}\oplus i\mathfrak{h}$
and $\mathfrak{m}_{\C}:=\mathfrak{m}\oplus i\mathfrak{m}$.
Let $B(X,Y)$ be the Killing form on $\mathfrak{g}\times \mathfrak{g}$ defined by $ B(X,Y)=\Tr (\ad(X)\circ \ad(Y))$.
It is a symmetric bilinear from. 
We consider the inner product 
\begin{equation}
 \langle Y_1, Y_2\rangle_{0}:=\frac{1}{2(d-1)}B(Y_1,Y_2),\quad Y_1,Y_2 \in \mathfrak{g},
\end{equation}
induced by the Killing form.
The restriction of $\langle\cdot,\cdot\rangle_{0}$ to $\mathfrak{p}$ defines
an inner product on $\mathfrak{p}$ and
hence induces a $G$-invariant riemannian metric on $\widetilde{X}$, which 
has constant curvature $-1$. Then, $\widetilde{X}$, equipped with this metric,
is isometric to $\H^{d}$.
Let $\Gamma\subset G$ be a lattice, i.e., a discrete subgroup of $G$ such that $\Vol(\Gamma\backslash G)<\infty$.
$\Gamma$ acts properly discontinuously on $\widetilde{X}$ and 
$X:=\Gamma\backslash \widetilde{X}$
is a locally symmetric space of finite volume.
We assume that $\Gamma$ is torsion free. i.e.,
there exists no $\gamma\in\Gamma$ with $\gamma\neq e$ such that for $k=2,3,\ldots$, $\gamma^{k}=e$.
Then, $X$ is a locally symmetric manifold. If in addition $\Gamma$ is cocompact, then $X$ is a locally symmetric compact 
hyperbolic manifold of odd dimension $d$.

Let $G=KAN$
be the standard Iwasawa decomposition of $G$.
Let $\Delta^{+}(\mathfrak{g},\mathfrak{a})$ be the set of positive roots of the system $(\mathfrak{g},\mathfrak{a})$.
Then, $\Delta^{+}(\mathfrak{g},\mathfrak{a})=\{\alpha\}$. Let  $M'=\Norm_{K}(A)$ be the normalizer of $A$ in $K$.
We define the restricted Weyl group as the quotient $ W_{A}:=M'/M$.
Then, $W_{A}$ has order 2. Let $w\in W_{A}$ be a non-trivial element of $W_{A}$, and 
$m_{w}$ a representative of $w$ in $M'$.
The action of $W_{A}$ on $\widehat{M}$ is defined by
\begin{equation*}
(w\sigma)(m):=\sigma(m_{w}^{-1}mm_{w}),\quad m\in M, \sigma\in\widehat{M}.
\end{equation*}
Let $H_{\R}\in\mathfrak{a}$ such that $\alpha(H_{\R})=1$. With respect to the inner product (2.1), $H_{\R}$ has norm 1.
We define 
\begin{equation}
 A^{+}:=\{\exp(tH_{\R})\colon t\in\R^{+}\}.
\end{equation}
We define also 
\begin{align}
 &\rho:=\frac{1}{2}\sum_{\alpha\in \Delta^+(\mathfrak{g},\mathfrak{a})}\dim(\mathfrak{g}_\alpha)\alpha,\\
&\rho_{\mathfrak{m}}:=\frac{1}{2}\sum_{\alpha\in \Delta^+(\mathfrak{m}_{\C},\mathfrak{b})}\alpha.\\\notag
\end{align}
The inclusion $i\colon M\hookrightarrow K$ induces the restriction map $i^{*}\colon R(K)\rightarrow R(M)$, where 
$R(K),R(M)$ are the representation rings over $\Z$ of $K$ and $M$, respectively.
Let $\widehat{K},\widehat{M}$ be the sets of equivalent classes of 
irreducible unitary representations of $K$ and $M$, respectively. Then,
for the highest weight $\nu_{\tau}$ of $\tau\in\widehat{K}$ we have
\begin{align*}
  \nu_{\tau}=(\nu_{1},\ldots,\nu_{n}),
\end{align*}
where $\nu_{1}\geq\ldots\geq\nu_{n}$ and $\nu_{i},i=1,\ldots,n$ are all integers or all half integers
(that is $\nu_{i}=q_{i}/2, q_{i}\in\Z$) and for the highest weight $\nu_{\sigma}$ of $\sigma\in\widehat{M}$,
\begin{align}
  \nu_{\sigma}=(\nu_{1},\ldots,\nu_{n-1},\nu_{n}),
\end{align}
where $\nu_{1}\geq\ldots\geq\nu_{n-1}\geq\lvert\nu_{n}\rvert$ and $\nu_{i},i=1,\ldots,n$ are all integers or all half integers (see  \cite[p. 20]{BO}).
Let $s$ be the spin representation of $K$\index{spin representation}, given by
\begin{equation*}
 s\colon K\rightarrow\End(\Delta_{2n})\oplus\End(\Delta_{2n})\xrightarrow{pr} \End(\Delta_{2n})
\end{equation*}
where $\Delta_{2n}:={\C^{2}}^{k}$ such that $n=k$, and $pr$ denotes the projection onto the first component (\cite[p.14]{Friedb}).
We set for abbreviation $S=\Delta_{2n}$. Let $(s^{+},S^{+})$, $(s^{-},S^{-})$ be the  half spin representations of $M$\index{spin representation!half spin representations},
where $S^{\pm}:=\Delta^{\pm}$ (\cite[p.22]{Friedb}).
The highest weight of $s$ is given by $
 \nu_{s}=(\frac{1}{2},\ldots,\frac{1}{2})$,
and the highest weights of $s^{+},s^{-}$ are
$\nu_{s^{+}}=(\frac{1}{2},\ldots,\frac{1}{2})$, 
$\nu_{s^{-}}=(\frac{1}{2},\ldots,-\frac{1}{2})$,
respectively (\cite[p. 20]{BO}).

We consider now the parametrization of the principal series representation. 
Let $P=MAN$ be the standard parabolic subgroup of $G$. For $(\sigma,V_{\sigma})\in \widehat{M}$,
we define the space $\mathcal{H}_{\sigma}$ of continuous functions on $G$ by
\begin{equation*}
 \mathcal{H}_{\sigma}:=\{f\colon G\rightarrow V_{\sigma}: f(gman)=e^{-(i\lambda+|\rho|)t}\sigma^{-1}(m)f(g), \forall g\in G, \forall man\in P\},
\end{equation*}
where $\lambda\in\C$ and $\rho$ as in (2.3), with norm 
\begin{equation}
 \lVert f \rVert_{c}=\int_{K}\lVert f(k) \rVert^{2}dk.
\end{equation}
We define the principal series representation as the induced representation 
\begin{equation*}
 \pi_{\sigma,\lambda}:=\Ind_{P}^{G}(\sigma\otimes e^{i\lambda}\otimes \Id),
\end{equation*}
with representation space the Hilbert space, obtained by completion of $\mathcal{H}_{\sigma}$ with respect to the norm 
$\lVert \cdot\rVert_{c}$ in (2.6).
For $f\in\mathcal{H}_{\sigma}$, the action of $G$ on  $f$ is given by $ \pi_{\sigma,\lambda}(g)f(g')=f(g^{-1}g')$.
 $\mathfrak{a}_{\C}^{*}$ is the space of the linear functionals on $\mathfrak{a}_{\C}$.
In the definition of the space $ \mathcal{H}_{\sigma}$,
$\lambda$ is a complex number. Hence,  $\mathfrak{a}_{\C}^{*}$ 
is identified with $\C$, using the positive root.
If $\lambda\in\R$, then the representation $\pi_{\sigma,\lambda}$ is unitary. 
In addition, if $\lambda\in \R-\{0\}$, then $\pi_{\sigma,\lambda}$ is irreducible.

Let $\mu_{PL}(\pi_{\sigma,\lambda})$ be the Plancherel measure, viewed as a measure on the set of 
the principal series represenations $\pi_{\sigma,\lambda}$.
Since $\rank(G)>\rank(K)$, by classical result of Harish-Chandra (\cite{HC2}), the set of the 
discrete series representations of $G$ is empty.
Then, by \cite[Theorem 13.2]{Knapp}, 
\begin{equation*}
 d\mu_{PL}(\pi_{\sigma,\lambda})=P_{\sigma}(i\lambda)d\lambda.
\end{equation*}
Here, $P_{\sigma}(i\lambda)$ is the Plancherel polynomial
given by
\begin{equation*}
 P_{\sigma}(i\lambda)=\prod_{\alpha\in\Delta^{+}(\mathfrak{g}_{\C},\mathfrak{h})}\frac{\langle i\lambda+\nu_{\sigma}+\rho_{\mathfrak{m}},\alpha\rangle}{\langle \rho_{\mathfrak{g}},\alpha\rangle},
\end{equation*}
where $\langle\cdot,\cdot\rangle$ is the inner product on $\mathfrak{a}^*$, $\nu_{\sigma}$ is the highest weight of $\sigma$ as in (2.5),
$\rho_{\mathfrak{m}}$ is defined as in (2.4),
and $\rho_{\mathfrak{g}}$ is defined by
\begin{equation*}
 \rho_{\mathfrak{g}}:=\frac{1}{2}\sum_{\alpha\in\Delta^{+}(\mathfrak{g}_{\C},\mathfrak{h})}\alpha,
\end{equation*}
(see \cite[p.46]{BO}).
Let $z=i\lambda\in\C$.
Then, by \cite[p.264-265]{Mi2}, $P_{\sigma}(z)$ is an even polynomial of $z$,
and hence $ P_{\sigma}(z)=P_{\sigma}(-z)$.
\newpage

\begin{center}
{\section{\textnormal{Twisted Selberg and Ruelle zeta functions}}}
\end{center}

We consider the twisted Ruelle and Selberg zeta functions associated with the geodesic flow on the
sphere vector bundle $S(X)$ of $X=\Gamma\backslash G/ K$. Since $K$ acts transitively on the unit vectors of $\mathfrak{p}$, 
by the adjoint representation, $S(\widetilde{X})$ can be represented by the homogenous space $G/M$. 
Therefore, $S(X)=\Gamma\backslash G/M$.

We recall the Cartan decomposition $G=KA^{+}K$ of $G$,
where $A^{+}$ is as in (2.1).
Then, every element $g\in G$ can be written as $g=ha_{+}k$, where $h,k\in K$ and $a_{+}=\exp(tH_{\R})$ for some $t\in \R^{+}$.
The positive real number $t$ equals $ t=d(eK,gK),$
where $d$ denotes the geodesic distance on $\widetilde{X}$.
It is a well known fact (\cite{GKM}) that there is a 1-1 correspondence between the closed geodesics on a manifold $X$ with negative sectional curvature 
and the non-trivial conjugacy classes of the fundamental group $\pi_{1}(X)$ of $X$.
The hyperbolic elements of $\Gamma$ can be realized as the semisimple elements of this group, i.e., the diagonalizable elements of $\Gamma$.
Since $\Gamma$ is a cocompact subgroup of $G$, we realize every element $\gamma\in\Gamma-\{e\}$ as hyperbolic.
We denote by $c_{\gamma}$ the closed geodesic on $X$, associated with the hyperbolic conjugacy class $[\gamma]$.
We denote also by $l(\gamma)$ the length of $c_{\gamma}$.
Since $\Gamma$ is torsion-free, $l(\gamma)$ is always positive and therefore we can obtain an infimum for the length spectrum $\spec(\Gamma):=\{l(\gamma):\gamma\in\Gamma\}$.
An element $\gamma\in\Gamma$ is called primitive if there exists no $n\in\N$ with $n>1$ and $\gamma_{0}\in\Gamma$ such that $\gamma=\gamma_{0}^{n}$.
We associate to a primitive element $\gamma_{0}\in\Gamma$ a prime geodesic on $X$.
The prime geodesics correspond to the periodic orbits of minimal length.
Hence, if a hyperbolic element $\gamma$ in $\Gamma$ is generated by a primitive element $\gamma_{0}$, then
there exists a $n_{\Gamma}(\gamma)\in \N$ such that $\gamma=\gamma_{0}^{n_{\Gamma}(\gamma)}$. The corresponding 
closed geodesic is of length $l(\gamma)=n_{\Gamma}(\gamma)l(\gamma_{0})$.

We lift now the closed geodesic $c_{\gamma}$ to the universal covering $\widetilde{X}$.
For $\gamma\in\Gamma$, $l(\gamma):=\inf \{d(x,\gamma x):x\in\widetilde{X}\}$,
and $l(\gamma)=\inf \{d(eK,g^{-1}\gamma gK):g\in G\}$.
Hence, we see that the length of the closed geodesic $l(\gamma)$ depends only on $\gamma\in\Gamma$.
Let $\gamma \in \Gamma$, with $ \gamma\neq e$ and $\gamma$ hyperbolic. Then, by \cite[Lemma 6.5]{Wa} there exist a $g\in G$, a $m_{\gamma} \in M$, and an 
$a_{\gamma} \in A^{+}$, such that $ g^{-1}\gamma g=m_{\gamma}a_{\gamma}$.
The element $m_{\gamma}$ is determined up to conjugacy classes in $M$, and the element $a_{\gamma}$ depends only on $\gamma$.

Analogous to the consideration of \cite[Section 3.1]{BO},
we define the geodesic flow $\phi$ on $S(X)$ by the map
\begin{equation*}
\phi\colon\R\times S(X)\ni (t,\Gamma gM)\rightarrow \Gamma g\exp(-tH_{\R}) M \in S(X).
\end{equation*}
A closed orbit of $\phi$ is described by the set $c:=\{\Gamma g\exp(-tH_{\R}) M\colon t\in\R\}$,
where $g\in G$ is such that $g^{-1}\gamma g:=m_{\gamma}a_{\gamma}\in MA^{+}$.

The Anosov property of the geodesic flow $\phi$ on $S(X)$ can be expressed by the following 
$d\phi$-invariant splitting of $TS(X)$
\begin{equation}
 TS(X)=T^sS(X)\oplus T^cS(X)\oplus T^uS(X),
\end{equation}
where $T^sS(X)$ consist of vectors that shrink exponentially, $T^uS(X)$ expand exponentially,
and $T^{c}S(X)$ is the one dimensional subspace of vectors tangent to the flow, with respect to the riemannian metric, as $t\rightarrow\infty$.
The spitting in (3.1), corresponds to splitting
\begin{equation}
  TS(X)=\Gamma\backslash G\times_{\Ad}(\overline{\mathfrak{n}}\oplus \mathfrak{a} \oplus \mathfrak{n}),
\end{equation}
where $\Ad$ denotes the adjoint action of $\Ad(\exp(-tH_{\R}))$ on $\overline{\mathfrak{n}}, \mathfrak{a},\mathfrak{n}$, 
and $\overline{\mathfrak{n}}=\theta \mathfrak{n}$ is the sum of the negative root spaces of
the system $(\mathfrak{g},\mathfrak{a})$.

Let $(\sigma,V_\sigma)\in \widehat{M}$ and $(\chi, V_\chi)$ be a finite dimensional representation of $\Gamma$.
Let $E(\sigma,\chi):=\Gamma\backslash (G\times_{\sigma\otimes\chi}(V_\sigma\otimes V_\gamma))\rightarrow S(X)$ be the vector bundle
over $S(X)$, associated to the  representations $\sigma$ and $\chi$. The action of $M$ and $\Gamma$ on $G\times(V_\sigma\otimes V_\gamma)$
is defined by
\begin{equation*}
 [g, v\otimes w]=\{(\gamma g, \sigma(m)v\otimes\chi(\gamma)w)\colon g\in G, v\in V_{\sigma}, w\in {V_{\chi}}, \gamma\in\Gamma, m\in M \}, 
\end{equation*}
We can lift the flow $\varphi$ to the flow $\varphi_{\sigma,\chi}$ on $E(\sigma,\chi)$ by the map
\begin{equation*}
 \phi_{\sigma,\chi}\colon\R \times E(\sigma, \chi)\ni (t, [g,v\otimes w])\mapsto[g\exp(-tH_{\R}), v\otimes w]\in E(\sigma, \chi).
\end{equation*}
\begin{defi}
Let $\chi\colon\Gamma\rightarrow \GL(V_{\chi})$ be a finite dimensional representation of $\Gamma$ and $\sigma\in \widehat{M}$.
The twisted Selberg zeta function $Z(s;\sigma,\chi)$ for 
$X$ is defined  by the infinite product
\begin{equation}
Z(s;\sigma,\chi):=\prod_{\substack{[\gamma]\neq{e}\\ [\gamma]\prim}} \prod_{k=0}^{\infty}\det\big(\Id-(\chi(\gamma)\otimes\sigma(m_\gamma)\otimes S^k(\Ad(m_\gamma a_\gamma)|_{\overline{\mathfrak{n}}})) e^{-(s+|\rho|)\lvert l(\gamma)}\big),
\end{equation}
where $s\in \C$, $\overline{\mathfrak{n}}=\theta \mathfrak{n}$ is the sum of the negative root spaces of $\mathfrak{a}$,
$S^k(\Ad(m_\gamma a_\gamma)_{\overline{\mathfrak{n}}})$ denotes the $k$-th
symmetric power of the adjoint map $\Ad(m_\gamma a_\gamma)$ restricted to $\mathfrak{\overline{n}}$, and $\rho$ is as in (2.3).
\end{defi}
\begin{defi} Let $\chi\colon\Gamma\rightarrow \GL(V_{\chi})$ be a finite dimensional representation of $\Gamma$ and $\sigma\in \widehat{M}$.
The twisted Ruelle zeta function $ R(s;\sigma,\chi)$ for $X$ is defined by the infinite product
\begin{equation}
 R(s;\sigma,\chi):=\prod_{\substack{[\gamma]\neq{e}\\ [\gamma]\prim}}\det\big(\Id-\chi(\gamma)\otimes\sigma(m_{\gamma})e^{-sl(\gamma)}\big)^{(-1)^{d-1}}.
\end{equation}
\end{defi}
In order to prove the convergence of the zeta functions,
we need to find an upper bound for the character of any finite dimensional representation of $\Gamma$.
\begin{lem}
 Let $\chi\colon\Gamma\rightarrow \GL(V_{\chi})$ be a finite dimensional representation of $\Gamma$.
 Then, there exist positive constants $K,k>0$ such that 
 \begin{equation}
  \lvert\tr(\chi(\gamma))\rvert\leq Ke^{kl(\gamma)},\quad \forall\gamma\in\Gamma-\{e\}.
 \end{equation}
\end{lem}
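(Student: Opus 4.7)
The plan is to reduce the claim to a well-known geometric fact about cocompact lattices: the word metric on $\Gamma$ is quasi-isometric to the Riemannian metric on $\widetilde X$ via the orbit map. Both sides of (3.5) are class functions on $\Gamma$ (the trace is invariant under conjugation, and $l(\gamma)$ depends only on $[\gamma]$), so I may freely replace $\gamma$ by any conjugate. The key trick is to pick a conjugate whose displacement at a fixed basepoint is controlled by the translation length $l(\gamma)$, rather than by the potentially much larger $d(x_0,\gamma x_0)$.

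Fix $x_0 \in \widetilde X$ and a compact fundamental domain $F\ni x_0$ with $D:=\diam(F)$. Cocompactness of $\Gamma$ forces $\Gamma$ to be finitely generated; let $S\subset\Gamma$ be a finite symmetric generating set and write $|\gamma|_S$ for the corresponding word length. The \v{S}varc--Milnor lemma furnishes constants $C_1,C_2>0$ with $|\gamma|_S \leq C_1\,d(x_0,\gamma x_0) + C_2$ for every $\gamma\in\Gamma$.

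Now let $\gamma\in\Gamma\setminus\{e\}$. Since $\Gamma$ is torsion-free and cocompact, $\gamma$ is hyperbolic and admits an axis; pick $y\in\widetilde X$ on this axis so that $d(y,\gamma y)=l(\gamma)$. Choose $\gamma_0\in\Gamma$ with $d(\gamma_0 y,x_0)\leq D$ (possible because $\Gamma\cdot F=\widetilde X$), and set $\gamma':=\gamma_0\gamma\gamma_0^{-1}$. Using that $\gamma_0$ and $\gamma'$ act by isometries, together with $\gamma'\gamma_0=\gamma_0\gamma$, the triangle inequality gives
$$d(x_0,\gamma' x_0)\leq d(x_0,\gamma_0 y)+d(\gamma_0 y,\gamma_0\gamma y)+d(\gamma'\gamma_0 y,\gamma' x_0)\leq 2D + l(\gamma),$$
whence $|\gamma'|_S\leq C_1 l(\gamma) + (2DC_1+C_2)$.

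Finally, fix any submultiplicative norm $\|\cdot\|$ on $\End(V_\chi)$ and put $C:=\max_{s\in S}\|\chi(s)\|\geq 1$. Writing $\gamma'$ as a word in $S$ of length $|\gamma'|_S$ yields $\|\chi(\gamma')\|\leq C^{|\gamma'|_S}$, so by conjugation invariance of the trace,
$$|\tr\chi(\gamma)|=|\tr\chi(\gamma')|\leq\dim(V_\chi)\,\|\chi(\gamma')\|\leq\dim(V_\chi)\,C^{C_1 l(\gamma)+2DC_1+C_2},$$
which is precisely the desired bound $K e^{k l(\gamma)}$ with $k=C_1\log C$ and $K=\dim(V_\chi)\,C^{2DC_1+C_2}$. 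The main (though mild) obstacle is the \v{S}varc--Milnor quasi-isometry, which requires cocompactness of $\Gamma$; with this and the conjugation trick in place, the rest is an elementary generator-norm estimate.
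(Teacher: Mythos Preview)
Your proof is correct and follows essentially the same route as the paper's: both combine the generator-norm bound $\|\chi(\gamma)\|\le C^{|\gamma|_S}$ with the quasi-isometry between the word metric and the Riemannian metric (the paper cites \cite[Prop.~3.2]{LMR}, you invoke \v{S}varc--Milnor), and then use the same conjugation trick---replacing $\gamma$ by a conjugate whose displacement at $x_0$ is at most $l(\gamma)+2\diam(F)$---to pass from $d(x_0,\gamma x_0)$ to the translation length. The triangle-inequality computation and the final constants match the paper's almost line for line.
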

\begin{proof}
 We fix a finite set of generators $L=\{\gamma_1,\ldots,\gamma_{r}\}$ of $\Gamma$ and choose a norm $\lVert\cdotp\rVert$ on $V_{\chi}$.
 Let $d_{W}(\cdot,\cdot)$ be the word metric on $\Gamma$ (see \cite{LMR} for further details). Let $l_{W}(\gamma)=d_{W}(\gamma,e)$ be the length of 
 $\gamma\in\Gamma$ with respect to this metric.
 Then, if we put $C=\max\{\lVert\chi(\gamma_i)\rVert: \gamma_{i}\in L\cup L^{-1}\}$, we get for $c=\log C$,
 \begin{equation}
  \lVert\chi(\gamma)\rVert\leq e^{cl_{W}(\gamma)}.
 \end{equation}
 By \cite[Prop. 3.2]{LMR}, it follows that there exist positive constants $c_{1},c_{2}>0$ such that 
 \begin{equation}
  c_{1}d(x_{0},\gamma x_{0})\leq d_{W}(e,\gamma)\leq c_{2}d(x_{0}, \gamma x_{0}),
 \end{equation}
where $x_{0}:=eK$ is  the identity element of $\widetilde{X}$.
Then, (3.6) becomes by (3.7)
 \begin{equation*}
   \lVert\chi(\gamma)\rVert\leq C_{1}e^{c_{2}d(x_{0},\gamma x_{0})}.
 \end{equation*}
It follows that
\begin{equation}
 \lvert\tr\chi(\gamma)\rvert\leq\dim(V_{\chi})\lVert\chi(\gamma)\rVert\leq C_{3}e^{c_{2}d(x_{0},\gamma x_{0})}.
\end{equation}
Now by definition,
\begin{equation*}
 l(\gamma):=\min\{d(x,\gamma x):x\in\widetilde{X}\}.
\end{equation*}We choose a fundamental domain $F\subset\widetilde{X}$ for $\Gamma$ such that $x_{0}\in F$.
Given $\gamma\in\Gamma$, let $x_{1}$ be in $\widetilde{X}$, such that $l(\gamma)=d(x_{1},\gamma x_{1})$. Then, there exists a $\gamma_1\in\Gamma$
such that $ x_{1}\in \gamma_{1}F$. Let $x_{2}\in F$ such that $x_{1}=\gamma_{1}x_{2}$.
By compactness of the fundamental domain, $\diam(F)$ is finite. If we put $\delta:=\diam(F)$, then
\begin{equation}
 d(x_{0},x_{2})\leq \delta.
\end{equation}
We see that
\begin{align}
 d(x_{0},\gamma_{1}^{-1}\gamma\gamma_{1} x_{0})\notag&\leq d(x_{0},x_{2})+d(x_2,\gamma_{1}^{-1}\gamma\gamma_{1}x_{0})\\
&\leq \delta+d(x_2,\gamma_{1}^{-1}\gamma\gamma_{1}x_{0}).
\end{align}
In addition,
\begin{align}
d(x_2,\gamma_{1}^{-1}\gamma\gamma_{1}x_{0})\notag&\leq d(x_{2},\gamma_{1}^{-1}\gamma\gamma_{1}x_{2})+d(\gamma_{1}^{-1}\gamma\gamma_{1}x_{2},\gamma_{1}^{-1}\gamma\gamma_{1}x_{0})\\\notag
&\leq d(x_{2},\gamma_{1}^{-1}\gamma\gamma_{1}x_{2})+d(x_{0},x_{2})\\
&\leq d(x_{2},\gamma_{1}^{-1}\gamma\gamma_{1}x_{2})+\delta.
\end{align}
Hence, by (3.10) and (3.11) we get
\begin{equation*}
  d(x_{0},\gamma_{1}^{-1}\gamma\gamma_{1} x_{0})\leq 2\delta+ d(x_{2},\gamma_{1}^{-1}\gamma\gamma_{1}x_{2}).
\end{equation*}
Recall that $x_{1}=\gamma_{1}x_{2}$. Therefore, we have
\begin{align}
 d(x_{0},\gamma_{1}^{-1}\gamma\gamma_{1}x_{0})\notag&\leq 2\delta+d(\gamma_{1}^{-1}x_{1},\gamma_{1}^{-1}\gamma x_{1})\\
&\leq 2\delta+d(x_{1},\gamma x_{1}).
\end{align}
Using (3.8) and (3.12) we obtain the following inequalities.
\begin{align*}
 \lvert\tr(\chi(\gamma))\rvert&=\lvert\tr(\chi(\gamma_{1}^{-1}\gamma\gamma_{1}))\rvert\notag\\
 &\leq C_{3}e^{c_{2}d(x_{0},\gamma_{1}^{-1}\gamma\gamma_{1} x_{0})}\notag\\
 &\leq C_{3}e^{c_{2}(2\delta+d(x_{1},\gamma x_{1}))}\notag\\
 &=C_{4}e^{c_{2}d(x_{1},\gamma x_{1})}=C_{4}e^{c_{2}l(\gamma)}.
\end{align*} 
The assertion follows.
\end{proof}
We are ready now to prove the convergence of the Selberg and Ruelle zeta functions. 
\begin{prop}
Let $\chi\colon\Gamma\rightarrow \GL(V_{\chi})$ be a finite dimensional representation of $\Gamma$ and $\sigma\in \widehat{M}$. Then, there exists a constant $c>0$ such that 
 \begin{equation}
 Z(s;\sigma,\chi):=\prod_{\substack{[\gamma]\neq{e}\\ [\gamma]\prim}}\prod_{k=0}^{\infty}\det(1-(\chi(\gamma)\otimes\sigma(m_\gamma)\otimes S^k(\Ad(m_\gamma a_\gamma)_{\overline{\mathfrak{n}}}))e^{-(s+|\rho|)l(\gamma)}),
\end{equation}
converges absolutely and uniformly on compact subsets of the half-plane $\emph{\Re}(s)>c$.
\end{prop}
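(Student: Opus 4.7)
The plan is to take logarithms of the Euler product and reduce convergence of the product to summability of a triple series indexed by $[\gamma]$, the symmetric power index $k$, and the expansion index $m$ coming from $\log\det(\Id-A)=-\sum_{m\ge 1}\tr(A^m)/m$. For each primitive $[\gamma]$, set $A_{\gamma,k}:=\bigl(\chi(\gamma)\otimes\sigma(m_\gamma)\otimes S^k(\Ad(m_\gamma a_\gamma)|_{\overline{\mathfrak{n}}})\bigr)e^{-(s+|\rho|)l(\gamma)}$. Since the trace of a tensor power factorises, I would estimate
\[
|\tr(A_{\gamma,k}^m)|\le |\tr\chi(\gamma^m)|\cdot|\tr\sigma(m_\gamma^m)|\cdot|\tr S^k(\Ad(m_\gamma^m a_\gamma^m)|_{\overline{\mathfrak{n}}})|\,e^{-m(\Re(s)+|\rho|)l(\gamma)},
\]
and treat the three factors separately.

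For the first factor I would invoke Lemma~3.3, together with $l(\gamma^m)=m\,l(\gamma)$, to obtain $|\tr\chi(\gamma^m)|\le K e^{cm l(\gamma)}$. For the second factor, compactness of $M$ and unitarity of $\sigma$ give the trivial bound $|\tr\sigma(m_\gamma^m)|\le\dim V_\sigma$. For the third, since $\overline{\mathfrak{n}}=\mathfrak{g}_{-\alpha}$ in the real-rank-one setting and $\alpha(H_{\R})=1$, the eigenvalues of $\Ad(a_\gamma)|_{\overline{\mathfrak{n}}}$ all equal $e^{-l(\gamma)}$, while $\Ad(m_\gamma)|_{\overline{\mathfrak{n}}}$ is an isometry and contributes eigenvalues of modulus one; passing to $S^k$ and taking $m$-th powers yields
\[
|\tr S^k(\Ad(m_\gamma^m a_\gamma^m)|_{\overline{\mathfrak{n}}})|\le \binom{k+d-2}{d-2}\,e^{-km\,l(\gamma)}.
\]

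Assembling these, the sum over $m\ge 1$ becomes a geometric series controlled by $e^{-(\Re(s)+|\rho|+k-c)l(\gamma)}$, bounded using the uniform lower bound $l(\gamma)\ge l_0>0$ supplied by cocompactness and torsion-freeness of $\Gamma$. Summing then over $k\ge 0$ uses the identity $\sum_{k\ge 0}\binom{k+d-2}{d-2}x^k=(1-x)^{-(d-1)}$, again with $x=e^{-l(\gamma)}\le e^{-l_0}<1$, and produces an estimate of the form $C\,e^{-(\Re(s)+|\rho|-c)l(\gamma)}$ per primitive class.

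The final step is to sum over primitive $[\gamma]$. For this I would use the standard exponential counting bound for a cocompact lattice in $G=\Spin(d,1)$: the number of conjugacy classes with $l(\gamma)\le T$ grows at most like $e^{2|\rho|T}$, which follows from the volume growth on $\H^d$ and a fundamental domain argument analogous to the one in Lemma~3.3. Absolute and uniform convergence on compact subsets of the half-plane then holds for $\Re(s)>c+|\rho|$, which is the promised constant. The main technical obstacle compared to the unitary case is precisely that $c$ in Lemma~3.3 is in general strictly positive; the unitary case recovers $c=0$ and the classical abscissa $|\rho|$, but for arbitrary $\chi$ one must accept a shift of the half-plane of convergence by $c$. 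The Ruelle zeta function is treated identically, replacing the $S^k$-factor by $\Id$, and converges absolutely on the same half-plane (in fact on the larger half-plane $\Re(s)>c$).
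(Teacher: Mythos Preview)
Your argument for the Selberg zeta function is correct and follows essentially the paper's route: expand $\log\det$, bound $\tr\chi$ via Lemma~3.3, $\tr\sigma$ by $\dim V_\sigma$, control the $S^k$-factor through the eigenvalue structure of $\Ad(m_\gamma a_\gamma)|_{\overline{\mathfrak{n}}}$, and finish with the exponential counting bound for conjugacy classes; the only cosmetic difference is that the paper first collapses the $k$-sum algebraically via $\sum_{k\ge 0}\tr S^k(\cdot)=\det(\Id-\cdot)^{-1}$ and reindexes primitives-times-powers as a single sum over all nontrivial classes before estimating. Your closing parenthetical about the Ruelle zeta is off, however: by the same bookkeeping the Ruelle product lacks the built-in factor $e^{-|\rho|l(\gamma)}$, so it converges only for $\Re(s)>c+2|\rho|$, a \emph{smaller} half-plane than the Selberg one, not a larger one.
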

\begin{proof}
 We observe that 
 \begin{align}
\log Z(s;\sigma,\chi)\notag=&\sum_{\substack{[\gamma]\neq{e}\\ [\gamma]\prim}} \sum_{k=0}^{\infty}\tr\log(1-(\chi(\gamma)\otimes\sigma(m_\gamma)\otimes S^k(\Ad(m_\gamma a_\gamma)_{\overline{\mathfrak{n}}}))e^{-(s+|\rho|)l(\gamma)})\\\notag
=&-\sum_{\substack{[\gamma]\neq{e}\\ [\gamma]\prim}}\sum_{k=0}^{\infty}\sum_{j=1}^{\infty}\frac{\tr((\chi(\gamma)\otimes\sigma(m_\gamma)\otimes S^k(\Ad(m_\gamma a_\gamma)_{\overline{\mathfrak{n}}}))e^{-(s+|\rho|)l(\gamma)})^{j}}{j}\\\notag
=&-\sum_{[\gamma]\neq{e}}\sum_{k=0}^{\infty} \frac{1}{n_{\Gamma}(\gamma)}\tr(\chi(\gamma)\otimes\sigma(m_\gamma)\otimes S^k(\Ad(m_\gamma a_\gamma)_{\overline{\mathfrak{n}}}))e^{-(s+|\rho|)l(\gamma)}\\\notag
=&-\sum_{[\gamma]\neq{e}}\frac{1}{n_{\Gamma}(\gamma)}\tr(\chi(\gamma)\otimes\sigma(m_\gamma))\frac{e^{-(s+|\rho|)l(\gamma)}}{\det(\Id-\Ad(m_\gamma a_\gamma)_{\overline{\mathfrak{n}}})},\\
\end{align}
where in the last equation we made use of the identity
\begin{equation*}
 \sum_{k=0}^{\infty} S^k(\Ad(m_\gamma a_\gamma)_{\overline{\mathfrak{n}}})=\frac{1}{\det(\Id-\Ad(m_{\gamma}a_{\gamma})_{\overline{\mathfrak{n}}}}.
\end{equation*}
Initially, we observe that 
\begin{equation*}
 \lvert \tr\sigma(m_{\gamma})\rvert\leq \dim(\sigma), \quad\forall \sigma\in \widehat{M}.
\end{equation*}
We need an upper bound for the growth of the length spectrum $l(\gamma)$.
Using the normalization of the Haar measure on $G$ as in \cite[Proposition 7.6.4]{Wab} we see that there exists a
positive constant $C>0$ such that for every $R>0$ 
\begin{equation*}
 \Vol(B(x_{0},R))\leq Ce^{2|\rho| R},
\end{equation*}
where $\rho$ as in (2.3).
Since  $\Gamma$ is a cocompact lattice of $G$, there exists a positive constant $C'$ such that 
\begin{equation}
\sharp \{[\gamma]:l(\gamma)<R\}\leq\sharp\{ \gamma \in \Gamma: l(\gamma)\leq R\}\leq C'e^{2|\rho |R}.
\end{equation}
We need also an upper bound for the quantity
\begin{equation*}
 \frac{1}{\det(\Id-\Ad(m_\gamma a_\gamma)_{\overline{\mathfrak{n}}})}.
\end{equation*}
Since $
 \det(\Ad(a_{\gamma})_{\overline{\mathfrak{n}}})=\exp(-2|\rho| l(\gamma))
$
we can use the estimates (3.15) to see that we can consider a $[\gamma_{min}]$ among all the conjugacy classes of $\Gamma$, such that $l(\gamma_{min})$
is of minimum length.
Hence, there exists a positive constant $C''>0$ such that
\begin{equation*}
  \frac{1}{\det(\Id-\Ad(m_\gamma a_\gamma)_{\overline{\mathfrak{n}}})}<C''.
\end{equation*}
By Lemma 3.3, it follows that there exist positive constants $C,c_{1}>0$ such that
\begin{align*}
\sum_{[\gamma]\neq{e}}\frac{1}{n_{\Gamma}(\gamma)}\Big|\tr(\chi(\gamma)\otimes&\sigma(m_\gamma))\frac{e^{-(s+|\rho|)l(\gamma)}}
{\det(\Id-\Ad(m_\gamma a_\gamma)_{\overline{\mathfrak{n}}})}\Big|\\
&\leq C \sum_{[\gamma]\neq{e}}e^{(c_{1}-\text{\Re}(s))l(\gamma)}\\
&= C\sum_{k=0}^{\infty}\sum_{\substack{[\gamma]\neq {e}\\ k\leq l(\gamma)\leq k+1}}e^{(c_{1}-\text{\Re}(s)) l(\gamma)}\\
&\leq C\sum_{k=0}^{\infty}\mathcal{N}(k+1) e^{(c_{1}-\text{\Re}(s))k},\\
\end{align*}
where 
\begin{equation*}
 \mathcal{N}(R):=\sharp\{[\gamma] \in \Gamma: l(\gamma)\leq R\},\quad R\geq 0.
\end{equation*}
By (3.15), we have 
\begin{equation*}
 \sum_{k=0}^{\infty}\mathcal{N}(k+1) e^{(c_{1}-\text{\Re}(s))k}\leq C' \sum_{k=0}^{\infty} e^{(2|\rho|+c_{1}-\text{\Re}(s))k}.
\end{equation*}
Hence, there exists a positive constant $c>0$ such that for $s\in\C$ with $\text{\Re}(s)>c$,
\begin{equation*}
\sum_{[\gamma]\neq{e}}\frac{1}{n_{\Gamma}(\gamma)}\Big|\tr(\chi(\gamma)\otimes\sigma(m_\gamma))\frac{e^{-(s+|\rho|)l(\gamma)}}
{\det(\Id-\Ad(m_\gamma a_\gamma)_{\overline{\mathfrak{n}}})}\Big|\\
 <\infty.
\end{equation*}
The assertion follows form (3.14).
\end{proof}

A similar approach will be used to establish the convergence of the Ruelle zeta function.
\begin{prop}
Let $\chi\colon\Gamma\rightarrow \GL(V_{\chi})$ be a finite dimensional representation of $\Gamma$ and $\sigma\in\widehat{M}$. Then, there exists a constant $r>0$ such that 
 \begin{equation}
 R(s;\sigma,\chi):=\prod_{\substack{[\gamma]\neq{e}\\ [\gamma]\prim}}\det\big(\Id-\chi(\gamma)\otimes\sigma(m_{\gamma})e^{-sl(\gamma)}\big)^{(-1)^{d-1}}.
\end{equation}
converges absolutely and uniformly on compact subsets of the half-plane \emph{Re}$(s)>r$.
\end{prop}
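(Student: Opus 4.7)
The plan is to mimic the proof of Proposition 3.4, taking advantage of the fact that the Ruelle zeta function is structurally simpler because it contains no symmetric power sum in $\overline{\mathfrak{n}}$. The first step is to take the logarithm and expand. Writing
\[
\log R(s;\sigma,\chi) = (-1)^{d-1}\sum_{\substack{[\gamma]\neq e\\ [\gamma]\,\prim}}\tr\log\bigl(\Id-\chi(\gamma)\otimes\sigma(m_\gamma)e^{-sl(\gamma)}\bigr),
\]
I expand the matrix logarithm as a power series and rearrange the double sum over primitive classes and their powers $j\ge 1$ into a single sum over all nontrivial $[\gamma]\in\Gamma$, using that $\gamma=\gamma_0^{n_\Gamma(\gamma)}$ and $l(\gamma)=n_\Gamma(\gamma)l(\gamma_0)$. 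This reduces matters to proving the absolute convergence of
\[
\sum_{[\gamma]\neq e}\frac{1}{n_\Gamma(\gamma)}\bigl|\tr(\chi(\gamma)\otimes\sigma(m_\gamma))\bigr|\,e^{-\Re(s)l(\gamma)}
\]
on some right half-plane.

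Next, I estimate each factor in the general term. Since $\sigma\in\widehat{M}$ is a unitary representation of a compact group, $|\tr\sigma(m_\gamma)|\le\dim(\sigma)$. By Lemma 3.3, there exist constants $K,k>0$ with $|\tr\chi(\gamma)|\le K e^{kl(\gamma)}$, so the product trace is bounded by a constant times $e^{kl(\gamma)}$. Ignoring the harmless factor $1/n_\Gamma(\gamma)\le 1$, the series is dominated by
\[
C\sum_{[\gamma]\neq e} e^{(k-\Re(s))l(\gamma)}.
\]

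The final step is to regroup by length. Using the lattice point count (3.15), namely $\mathcal{N}(R):=\#\{[\gamma]:l(\gamma)\le R\}\le C'e^{2|\rho|R}$, I break the sum into shells $k\le l(\gamma)\le k+1$ exactly as in Proposition 3.4 to obtain
\[
C\sum_{[\gamma]\neq e} e^{(k-\Re(s))l(\gamma)}\le C''\sum_{j=0}^{\infty}e^{(2|\rho|+k-\Re(s))j},
\]
which converges whenever $\Re(s)>r$ with $r:=2|\rho|+k$. This gives absolute and locally uniform convergence of $\log R(s;\sigma,\chi)$, hence of the product itself, on the half-plane $\Re(s)>r$.

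There is no real obstacle: the argument is essentially a simplified version of Proposition 3.4, since the Ruelle zeta function lacks the inner product $\prod_{k\ge 0}$ over symmetric powers. In particular, one does not need the uniform upper bound on $1/\det(\Id-\Ad(m_\gamma a_\gamma)|_{\overline{\mathfrak{n}}})$ that was required in the Selberg case. The only ingredients used are the unitarity of $\sigma$, the character bound of Lemma 3.3, and the exponential growth estimate (3.15) for the length spectrum.
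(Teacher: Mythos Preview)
Your proof is correct and follows essentially the same route as the paper: expand $\log R(s;\sigma,\chi)$ into a sum over all nontrivial conjugacy classes, bound $|\tr\sigma(m_\gamma)|\le\dim(\sigma)$ and $|\tr\chi(\gamma)|\le Ke^{kl(\gamma)}$ via Lemma~3.3, then control the resulting series by the shell decomposition and the growth estimate~(3.15). The only cosmetic point is that you reuse the letter $k$ both for the constant from Lemma~3.3 and for the shell index; renaming one of them would avoid ambiguity.
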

\begin{proof}
 We observe that 
  \begin{align}
\log R(s;\sigma,\chi)=\notag&(-1)^{d-1}\sum_{\substack{[\gamma]\neq{e}\\ [\gamma]\prim }} \tr\log(1-\chi(\gamma)\otimes \sigma(m_{\gamma})e^{-sl(\gamma)})\\\notag
&=(-1)^{d}\sum_{\substack{[\gamma]\neq{e}\\  [\gamma]\prim }}\sum_{j=1}^{\infty}\frac{\tr((\chi(\gamma)\otimes \sigma(m_{\gamma})e^{-sl(\gamma)})^{j})}{j}\\
&=(-1)^{d}\sum_{[\gamma]\neq{e}}\frac{1}{n_{\Gamma}(\gamma)} \tr(\chi(\gamma)\otimes \sigma(m_{\gamma}))e^{-sl(\gamma)}.
\end{align}
By Lemma 3.3, it follows that there exist positive constants $C,c_{1}>0$ such that
\begin{align*}
\sum_{[\gamma]\neq{e}}\frac{1}{n_{\Gamma}(\gamma)} \Big|\tr(\chi(\gamma)\otimes &\sigma(m_{\gamma}))e^{-sl(\gamma)}\Big |\\
&\leq C\sum_{[\gamma]\neq{e}} e^{(c_{1}-\text{\Re}(s))l(\gamma)}\\
&=C\sum_{k=0}^{\infty}\sum_{\substack{[\gamma]\neq e\\ k\leq l(\gamma)\leq k+1}} \mathcal{N}(k+1)e^{(c_{1}-\text{\Re}(s))l(\gamma)}\\
&\leq C\sum_{k=0}^{\infty}\mathcal{N}(k+1)e^{(c_{1}-\text{\Re}(s))k}.\\
\end{align*}
By (3.15), we have
\begin{equation*}
 \sum_{k=0}^{\infty}\mathcal{N}(k+1)e^{(c_{1}-\text{\Re}(s))k}\leq C'\sum_{k=0}^{\infty}e^{(2|\rho|+c_{1}-\text{\Re}(s))k}.
\end{equation*}
Hence, there exists a positive constant $r>0$ such that for $s\in\C$ with $\text{\Re}(s)>r$,
\begin{equation}
\sum_{[\gamma]\neq{e}}\frac{1}{n_{\Gamma}(\gamma)} \Big|\tr(\chi(\gamma)\otimes \sigma(m_{\gamma}))e^{-sl(\gamma)}\Big |<\infty.
\end{equation}
The assertion follows form (3.17).
\end{proof}
\begin{lem}We set
\begin{equation}
 L_{sym}(\gamma;\sigma):=\frac{\tr(\chi(\gamma)\otimes\sigma(m_{\gamma}))e^{-|\rho|l(\gamma)}}
 {\det{(\Id-\Ad(m_{\gamma}a_{\gamma})_{\overline{n}})}}.
\end{equation}
Then, the logarithmic derivative of the Selberg zeta function $Z(s;\sigma,\chi)$ is given by
\begin{equation}\label{log der selberg}
 L(s):=\frac{d}{ds}\log(Z(s;\sigma,\chi))=\sum_{[\gamma]\neq{e}}\frac{l(\gamma)}{n_{\Gamma}(\gamma)}L_{sym}(\gamma;\sigma)
e^{-sl(\gamma)}.
\end{equation}
\end{lem}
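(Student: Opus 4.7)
The plan is to differentiate the explicit expression for $\log Z(s;\sigma,\chi)$ already obtained in the proof of Proposition 3.4 (the chain of equalities ending in (3.14)) and then recognize the result in terms of $L_{sym}(\gamma;\sigma)$.

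Concretely, equation (3.14) gives, for $\Re(s) > c$,
\begin{equation*}
\log Z(s;\sigma,\chi) = -\sum_{[\gamma]\neq e} \frac{1}{n_{\Gamma}(\gamma)}\,\tr(\chi(\gamma)\otimes\sigma(m_\gamma))\,\frac{e^{-(s+|\rho|)l(\gamma)}}{\det(\Id-\Ad(m_\gamma a_\gamma)_{\overline{\mathfrak{n}}})}.
\end{equation*}
Differentiating each summand in $s$ produces a factor $-l(\gamma)$, which cancels the outer minus sign, and leaves the prefactor $\tr(\chi(\gamma)\otimes\sigma(m_\gamma))\,e^{-|\rho|l(\gamma)}/\det(\Id-\Ad(m_\gamma a_\gamma)_{\overline{\mathfrak{n}}})$, i.e.\ exactly $L_{sym}(\gamma;\sigma)$, multiplied by $e^{-sl(\gamma)}$. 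This produces, formally, the claimed series
\begin{equation*}
 L(s) = \sum_{[\gamma]\neq e} \frac{l(\gamma)}{n_{\Gamma}(\gamma)}\, L_{sym}(\gamma;\sigma)\, e^{-sl(\gamma)}.
\end{equation*}

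The only nontrivial point is to justify the termwise differentiation, which amounts to checking that the differentiated series converges absolutely and uniformly on compact subsets of some right half-plane. Here I would simply repeat the estimates from Proposition 3.4: by Lemma 3.3 and the volume estimate (3.15), the summand without the extra $l(\gamma)$ factor is bounded, up to a constant, by $e^{(c_1-\Re(s))l(\gamma)}$, and the number of $[\gamma]$ with $l(\gamma) \leq R$ grows at most like $e^{2|\rho|R}$. Inserting the additional factor $l(\gamma)$ and splitting the sum according to $k \leq l(\gamma) \leq k+1$ yields a bound of the form
\begin{equation*}
 C \sum_{k=0}^{\infty} (k+1)\, e^{(2|\rho|+c_1-\Re(s))k},
\end{equation*}
which still converges for $\Re(s)$ large enough; the polynomial factor $k+1$ is harmless since one can enlarge $c$ by any positive amount. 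This gives absolute and locally uniform convergence on a right half-plane, which licenses both term-by-term differentiation and the identification of the result with $L(s)$.

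The main (mild) obstacle is only bookkeeping: one must be careful that the constant $c>0$ after which $L(s)$ converges may be strictly larger than the one appearing in Proposition 3.4, and that the interchange of $d/ds$ with the infinite sum is done on a domain where the differentiated series is uniformly convergent. Once this is handled, no additional ingredient beyond (3.14) and the growth estimates already used in Proposition 3.4 is needed, and the asserted formula for $L(s)$ follows immediately.
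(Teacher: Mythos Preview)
Your proof is correct and follows the same approach as the paper: differentiate the expression (3.14) for $\log Z(s;\sigma,\chi)$ term by term and identify the result with $L_{sym}(\gamma;\sigma)e^{-sl(\gamma)}$. The paper's own proof is actually just the two-line computation without any justification of the termwise differentiation, so your added convergence argument is a slight elaboration rather than a different method.
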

\begin{proof}
We see by equation (3.14)
\begin{align*}
\frac{d}{ds}\log(Z(s;\sigma,\chi))&=\sum_{[\gamma]\neq{e}}\frac{l(\gamma)}{n_{\Gamma}(\gamma)}\tr(\chi(\gamma)\otimes\sigma(m_\gamma))
\frac{e^{-sl(\gamma)}e^{-|\rho|l(\gamma)}}{\det(1-\Ad(m_\gamma a_\gamma)_{\overline{\mathfrak{n}}})}\\ 
 &=\sum_{[\gamma]\neq e}\frac{l(\gamma)}{n_{\Gamma}(\gamma)}L_{sym}(\gamma;\sigma)
e^{-sl(\gamma)}.
\end{align*}
\end{proof}

\begin{center}
\section{\textmd{The twisted Bochner-Laplace operator}}
\end{center}

Let $E\rightarrow X$ be a complex vector bundle with covariant derivative $\nabla$. We define the second covariant derivative $\nabla^2$ by
\begin{equation*}
\nabla^2_{V,W}:=\nabla_{V}\nabla_{W}-\nabla_{\nabla^{LC}_{V}W}, 
\end{equation*}
where $V,W\in C^{\infty}(X,TX)$ and $\nabla^{LC}$ denotes the Levi-Civita connection on $TX$.
We define the connection Laplacian $\Delta_{E}$ to be the negative of the trace of the second covariant derivative, i.e., 
\begin{equation*}
 \Delta_{E}:=-\Tr\nabla^2.
\end{equation*}
By \cite[p.154]{LM}, the connection Laplacian is equal to the Bocner-Laplace operator, i.e.
\begin{equation*}
 \Delta_{E}=\nabla^{*}\nabla.
\end{equation*}
In terms of a local orthonormal frame field $(e_{1},\ldots,e_{d})$ of $T_{x}X$, for $x\in X$,
the connection Laplacian is given by
\begin{equation*}
\Delta_{E}=-\sum_{j=1}^{d}\nabla^ 2{_{e_j,e_j}}.
\end{equation*}
$\Delta_{E}\colon C^{\infty}(X,E)\circlearrowright$ is a second order differential operator.
Let $h$ be a metric in $E$. Then, $\Delta_{E}$ acts in $L^{2}(X,E)$ with domain $C^{\infty}(X,E)$. 
Since the principal symbol of $\Delta_{E}$ is computed to be $\sigma_{\Delta_{E}}(x,\xi)
 =\lVert \xi \rVert^ {2}_{x} \Id_{E_{x}}$, we can conclude that $\Delta_{E}$ is an elliptic operator
and hence it has nice spectral properties. Namely, its spectrum is discrete and contained in a translate of a positive cone $C\subset \C$ such that $\R^{+}\subset C$
(\cite[Theorem 8.4 and Theorem 9.3]{Sh}).
Furthermore, if the metric is compatible with the connection $\nabla$, $\Delta_{E}$ is formally self-adjoint. 

Let $\chi\colon\Gamma\rightarrow \GL(V_{\chi})$ be an arbitrary representation $\chi\colon\Gamma\rightarrow \GL(V_{\chi})$ of $\Gamma$. 
Let $E_{\chi}\rightarrow X$ be the associated flat vector bundle 
over $X$, equipped with a flat connection $\nabla^{E_{\chi}}$.

We specialize now to the twisted case  $E=E_{0}\otimes E_{\chi}$, where $E_{0}\rightarrow X$ is a complex vector bundle equipped with a connection $\nabla^{E_{0}}$ and 
a metric, which is compatible with this connection.
Let $\nabla^{E}=\nabla^{E_{0}\otimes E_{\chi}}$ the product-connection, defined by
\begin{equation*}
 \nabla^{E_{0}\otimes E_{\chi}}:=\nabla^{E_{0}}\otimes 1+1\otimes\nabla^{E_{\chi}}.
\end{equation*}
 We define the operator $\Delta_{E_{0},\chi}^\sharp$ by 
\begin{equation}
 \Delta_{E_{0},\chi}^{\sharp}=-\Tr\big((\nabla^{E_{0}\otimes E_{\chi}})^2\big).
\end{equation}
We choose a hermitian metric in $E_{\chi}$.
Then, $\Delta_{E_{0},\chi}^{\sharp}$ acts on $L^{2}(X,E_{0}\otimes E_{\chi})$. However, it is not a formally self-adjoint operator in general.
We want to describe this operator locally. Following the analysis in \cite{M1}, we consider an open subset $U$ of $X$ such that $E_{\chi}\lvert_{U}$ is trivial.
Then, $E_{0}\otimes E_{\chi}\lvert_{U}$ is isomorphic to the direct sum of $m$-copies of $E_{0}\lvert_{U}$, i.e., 
\begin{equation*}
 (E_{0}\otimes E_{\chi})\lvert_{U}\cong\oplus_{i=1}^{m}E_{0}\lvert_{U},
\end{equation*}
where $m:=\rank(E_\chi)=\dim V_{\chi}$.
Let $(e_{i}),i=1,\cdots,m$ be a basis of flat sections of $E_{\chi}\lvert_{U}$.
Then, each $\phi \in C^{\infty}(U,(E_{0}\otimes E_{\chi})\lvert_{U})$ can be written as
\begin{equation*}
\phi=\sum_{i=1}^{m}\phi_{i} \otimes e_{i},
\end{equation*}
where $\phi_{i}\in C^{\infty}(U, E_{0}\lvert_{U}), i=1,\ldots,m$.
The product connection is given by
\begin{equation*}
  \nabla_{Y}^{E_{0}\otimes E_{\chi}}\phi=\sum_{i=1}^{m}(\nabla_{Y}^{E_{0}})(\phi_{i})\otimes e_{i},
\end{equation*}
where  $Y\in C^{\infty}(X,TX)$.\\
By (4.1) we obtain the twisted Bochner-Laplace operator acting on $C^{\infty}(X,E_{0}\otimes E_{\chi})$, defined by
\begin{equation}
 \Delta_{E_{0},\chi}^{\sharp}\phi=\sum_{i=1}^{m}(\Delta_{E_{0}}\phi_{i})\otimes e_{i},
\end{equation}
where $\Delta_{E_{0}}$ denotes the Bochner-Laplace operator $\Delta^{E_{0}}=(\nabla^{E_{0}})^*\nabla^{E_{0}}$
associated to the connection $\nabla^{E_{0}}$.
Let now $\widetilde{E}_{0}, \widetilde{E}_{\chi}$ be the pullbacks to $\widetilde{X}$ of $E_{0},E_{\chi}$, respectively. 
Then,
\begin{equation*}
 \widetilde{E}_{\chi}\cong \widetilde{X}\times V_{\chi},
\end{equation*}
and
\begin{equation}
 C(\widetilde{X}, \widetilde{E}_{0}\otimes \widetilde{E}_{\chi})\cong  C(\widetilde{X}, \widetilde{E}_{0})\otimes V_{\chi}.
\end{equation}
With respect to the isomorphism (4.3), it follows from (4.2) that the lift of $\Delta_{E_{0},\chi}^{\sharp}$ to $\widetilde{X}$ takes the form
\begin{equation}
 \widetilde{\Delta}^{\sharp}_{E_{0},\chi}=\widetilde{\Delta}_{E_{0}}\otimes \Id_{V_{\chi}},
\end{equation}
where $\widetilde{\Delta}_{E_{0}}$ is the lift of $\Delta_{E_{0}}$ to $\widetilde{X}$.
By (4.2), $\Delta_{E_{0},\chi}^{\sharp}$ has principal symbol 
\begin{equation*}
 \sigma_{\Delta_{E_{0},\chi}^\sharp}(x,\xi)=\lVert \xi \rVert^ {2}_{x} \Id_{({E_{0}\otimes E_{\chi})_{x}}}, \quad x\in X, \xi\in\T_{x}^{*}X, \xi\neq0.
\end{equation*}
Hence, since the principal symbol is self-adjoint with respect to 
the fiber metrics on $E\chi$ and $E_{0}$,
it has nice spectral properties, i.e.,
its spectrum is discrete and contained in a translate of a positive cone $C\subset \C$ such that $\R^{+}\subset C$
(\cite[Theorem 8.4 and Theorem 9.3]{Sh}).
We include here some definitions, which are needed to study the spectrum of the
twisted Laplace-Bochner operator.
For further details see \cite[p.203-206]{BK2}.      
\begin{defi}
 A spectral cut is a ray
\begin{equation*}
 R_{\theta}:=\{\rho e^ {i\theta}: \rho\in[0,\infty]\},
\end{equation*}
where $\theta\in[0,2\pi)$.
\end{defi}
\begin{defi}
 The angle $\theta$ is a principal angle for the elliptic operator $\Delta_{E_{0},\chi}^{\sharp}$ if 
\begin{equation*}
 \spec(\sigma_{\Delta}(x,\xi))\cap R_{\theta}=\emptyset,\quad \forall x\in X,\forall\xi\in T_{x}^{*}X,\xi\neq 0.
\end{equation*}
\end{defi}
\begin{defi}
 We define the solid angle $L_{I}$ associated with a closed interval $I$ of $\R$ by
\begin{equation*}
 L_{I}:=\{\rho e^ {i\theta}: \rho\in(0,\infty), \theta\in I \}.
\end{equation*}
\end{defi}
\begin{defi}
The angle $\theta$ is an Agmon angle for an elliptic operator $\Delta_{E_{0},\chi}^{\sharp}$, if it is a principal angle for $D$ 
and there exists $\varepsilon>0$ such that 
\begin{equation*}
 \spec(\Delta)\cap L_{[\theta-\varepsilon,\theta+\varepsilon]}=\emptyset.
\end{equation*}
\end{defi}
\begin{lem}
Let $\varepsilon\in(0,\frac{\pi}{2})$ be an angle such that the principal symbol $\sigma_{{\Delta}^{\sharp}_{E_{0},\chi}}(x,\xi)$ of ${\Delta}^{\sharp}_{E_{0},\chi}$,
for $\xi\in T_{x}^{*}X,\xi\neq 0$ does not take values in $ L_{[-\varepsilon,\varepsilon]}$.
Then, the spectrum $\spec({\Delta}^{\sharp}_{E_{0},\chi})$ of the operator ${\Delta}^{\sharp}_{E_{0},\chi}$ 
is discrete and for every $\varepsilon\in(0,\frac{\pi}{2})$ there exist $R>0$ such that $\spec({\Delta}^{\sharp}_{E_{0},\chi})$
is contained in the set $B(0,R)\cup L_{[-\varepsilon,\varepsilon]}\subset \C$.
\end{lem}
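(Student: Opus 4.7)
The plan is to deduce the statement from the general spectral theory of parameter-dependent elliptic operators on closed manifolds, specifically the Shubin results \cite[Theorem 8.4 and Theorem 9.3]{Sh} already invoked in the paragraph preceding the lemma. The point is that all the hypotheses one needs are visible at the level of the principal symbol, which has already been computed above to be $\sigma(x,\xi) = \|\xi\|_x^2 \Id_{(E_0\otimes E_\chi)_x}$.

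First I would record that $\Delta^{\sharp}_{E_0,\chi}$ is elliptic, since $\sigma(x,\xi)$ is invertible for every $\xi\neq 0$. Because $X$ is compact, Theorem 8.4 of \cite{Sh} then gives that the operator, viewed as unbounded on $L^2(X,E_0\otimes E_\chi)$ with domain $H^2$, is closed with compact resolvent. This already yields discreteness of $\spec(\Delta^{\sharp}_{E_0,\chi})$: the spectrum consists of eigenvalues of finite algebraic multiplicity whose only possible accumulation point is $\infty$.

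For the localization inside $B(0,R)\cup L_{[-\varepsilon,\varepsilon]}$ I would verify parameter-ellipticity on the complementary sector. At every covector $(x,\xi)$ with $\xi\neq 0$ the only eigenvalue of $\sigma(x,\xi)$ is $\|\xi\|_x^2\in\R^+$, which by the hypothesis on $\varepsilon$ lies inside $L_{[-\varepsilon,\varepsilon]}$; hence $\sigma(x,\xi)-\lambda$ is invertible for every $\lambda\in\C\setminus L_{[-\varepsilon,\varepsilon]}$, uniformly in $x$ and in $\xi/\|\xi\|$. This is exactly the parameter-ellipticity hypothesis of Theorem 9.3 of \cite{Sh}, which furnishes, for every such $\lambda$ of sufficiently large modulus, a parametrix of $\Delta^{\sharp}_{E_0,\chi}-\lambda$ whose remainder is smoothing with operator norm decaying in $|\lambda|$. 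A standard Neumann-series argument upgrades this parametrix to an actual resolvent: there is $R>0$ so that $(\Delta^{\sharp}_{E_0,\chi}-\lambda)^{-1}$ exists for every $\lambda\in\C\setminus(B(0,R)\cup L_{[-\varepsilon,\varepsilon]})$, which forces $\spec(\Delta^{\sharp}_{E_0,\chi})\subset B(0,R)\cup L_{[-\varepsilon,\varepsilon]}$.

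The only non-routine input is the construction of the parameter-dependent parametrix behind \cite[Theorem 9.3]{Sh}, which is a pseudodifferential calculus computation carried out there in full generality. What makes this argument applicable in the present non-self-adjoint setting is that parameter-ellipticity is a condition purely on the principal symbol; the twist by the flat bundle $E_\chi$ does not change the principal symbol, by the local description \eqref{f:trace formula 1} of $\Delta^{\sharp}_{E_0,\chi}$ as $\Delta_{E_0}\otimes\Id_{V_\chi}$, and so the possible non-unitarity of $\chi$ (which prevents formal self-adjointness of the whole operator) poses no obstacle. The main obstacle in writing the proof is therefore the bookkeeping of quoting the Shubin theorems in the correct form; the analytic content is entirely contained in those references.
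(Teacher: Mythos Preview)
Your proposal is correct and follows exactly the paper's approach: the paper's proof consists solely of the two citations ``The discreteness of the spectrum follows from \cite[Theorem 8.4]{Sh}. For the second statement see \cite[Theorem 9.3]{Sh}.'' You have supplied the surrounding justification for why those theorems apply, but the argument is the same. One small correction: the cross-reference \eqref{f:trace formula 1} points to the trace formula in Theorem~5.5, not to the local description $\widetilde{\Delta}^{\sharp}_{E_0,\chi}=\widetilde{\Delta}_{E_0}\otimes\Id_{V_\chi}$; you presumably meant equation~(4.2) or~(4.4).
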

\begin{proof}
 The discreteness of the spectrum follows from \cite[Theorem 8.4]{Sh}. For the second statement see \cite[Theorem 9.3]{Sh}.
\end{proof}
Let $\lambda_{k}$ be an eigenvalue of ${\Delta}^{\sharp}_{E_{0},\chi}$ and 
$V_{\lambda_{k}}$ be the corresponding eigenspace.
This is a finite dimensional subspace of $C^{\infty}(X,E)$ invariant under $D$.
For every $k\in\N$, there exist $N_{k}\in\N$ such that 
\begin{align*}
 &({\Delta}^{\sharp}_{E_{0},\chi}-\lambda_{k}\Id)^{N_{k}}V_{\lambda_{k}}=0\\
&\lim_{k\rightarrow \infty}\lvert \lambda_{k}\rvert=\infty.
\end{align*}
By \cite{Mk}, the space $L^{2}(X,E)$ can be decomposed as
\begin{equation*}
 L^{2}(X,E)=\overline{\bigoplus_{k\geq 1}V_{\lambda_{k}}}.
\end{equation*}
This is the generalization of the eigenspace decomposition of a self-adjoint operator. 
We note here that in general the above decomposition is not a sum of mutually orthogonal subspaces (see \cite[p. 7]{M1}).
\begin{defi}
We call algebraic multiplicity $m(\lambda_{k})$ of the eigenvalue $\lambda_{k}$ the dimension of the corresponding
eigenspace $V_{\lambda_{k}}$.
\end{defi}
We want to define the heat operator $e^{-t\Delta_{E_{0},\chi}^\sharp}$ associated to the twisted Bochner-Laplace operator.
Let $\theta$ be an Agmon angle for the operator $\Delta_{E_{0},\chi}^\sharp$. Then, by definition of the Agmon angle and Lemma 4.5, there exists $\varepsilon>0$
such that 
\begin{equation*}
 \spec(\Delta_{E_{0},\chi}^\sharp)\cap L_{[\theta-\varepsilon,\theta+\varepsilon]}=\emptyset.
\end{equation*}
 Since $\Delta_{E_{0},\chi}^\sharp$ has discrete spectrum, there exists also an $r_{0}>0$ such that 
\begin{equation*}
\spec(\Delta_{E_{0},\chi}^\sharp)\cap\{z\in\C:\lvert z+1\rvert\leq 2r_{0}\}=\emptyset.
\end{equation*}
 We define a contour $\Gamma_{\theta,r_{0}}$ as follows.
\begin{equation*}
 \Gamma_{\theta,r _{0}}=\Gamma_{1}\cup\ \Gamma_{2}\cup\Gamma_{3},
\end{equation*}
where $\Gamma_{1}=\{-1+re^ {i\theta}\colon\infty>r\geq r_{0}\}$, $\Gamma_{2}=\{-1+r_{0}e^ {ia}\colon\theta\leq a\leq \theta+2\pi\}$,
$\Gamma_{3}=\{-1+re^{i(\theta+2\pi)}\colon r_{0}\leq r< \infty\}$.
On $\Gamma_{1}$, $r$ runs from $\infty$ to $r_{0}$, $\Gamma_{2}$ is oriented counterclockwise, and on $\Gamma_{3}$,
$r$ runs from $r_{0}$ to $\infty$.
We put
\begin{equation}
e^{-t\Delta_{E_{0},\chi}^{\sharp}}=\frac{i}{2\pi}\int_{\Gamma_{\theta,r_{0}}}e^{-t\lambda}(\Delta_{E_{0},\chi}^{\sharp}-\lambda\Id)^{-1} d\lambda.
\end{equation}
By \cite[Corollary 9.2]{Sh} and the fact that $\lvert e^{-t\lambda}\rvert\leq e^{-t\text{\Re}(\lambda)}$, the integral in equation (4.5)
is well defined.
\begin{center}
\section{\textnormal{Trace Formulas}}
\end{center}
We want now to define a more special case of a twisted Bochner-Laplace operator.
Namely, we conisder the operator $\Delta^{\sharp}_{\tau,\chi}$
acting on $C^{\infty}(X,E_{\tau}\otimes E_{\chi})$, where $E_{\tau}$
is the locally associated homogenous vector bundle
associated with a complex finite dimensional unitary representaion $(\tau,V_{\tau})$ of $K$.

The keypoint is that, when we consider the lift of the twisted Bochner-Laplace operator to the universal covering, this operator acts as the identity operator on the space
of the smooth sections of the flat vector bundle $E_{\chi}$.
Recall that by formula (4.4), we get
\begin{equation*}
 \widetilde{\Delta}^{\sharp}_{\tau,\chi}=\widetilde{\Delta}_{\tau}\otimes \Id_{V_{\chi}},
\end{equation*}
where $\widetilde{\Delta}_\tau$ is the lift to $\widetilde{X}$ of the Bochner-Laplace operator $\Delta_{\tau}$, associated with the
representation $\tau$ of $K$.

We give here an explicit description of the operator $\Delta_{\tau}$.
We regard the Lie group $G$ as principal $K$-fiber bundle over $\widetilde{X}$. Let $\pi\colon G\rightarrow G/K$ be the canonical projection.
Then, since $\mathfrak{p}$ is invariant under the adjoint action $\Ad(k), k\in K$, the assignment
\begin{equation*}
 T_{g}^{hor}:={\frac{d}{dt}\bigg|_{t=0}g\exp(tX), \quad X\in \mathfrak{p}}
\end{equation*}
defines a horizontal distribution on $G$ (\cite[Chapter \MakeUppercase{\romannumeral 3}]{KNo}). This is the canonical connection in the principal bundle $G$.
Let $\tau:K\rightarrow \GL(V_\tau)$ be a complex finite dimensional unitary representation of $K$ on a vector space $V_\tau$, equipped with an inner product $\langle\cdotp,\cdotp\rangle_{\tau}$.
Let $\widetilde{E}_{\tau}$ be the homogenous vector bundle associated with $(\tau,V_{\tau})$, defined by
\begin{equation*}
\widetilde{E}_{\tau}:=G\times_{\tau} V_{\tau}\rightarrow\widetilde{X},
\end{equation*}
where $K$ acts on $(G,V_{\tau})$ on the right  by
\begin{equation*}
 (g,v)k=(gk,\tau^{-1}(k)v), \quad g\in G, k\in K, v\in V_{\tau}.
\end{equation*}
The  inner product $\langle \cdotp, \cdotp \rangle_{\tau}$ on the vector space $V_{\tau}$
induces a $G$-invariant metric $h^{E_\tau}$ on $\widetilde{E}_{\tau}$.
We denote by $C^{\infty}(\widetilde{X},\widetilde{E}_{\tau})$ the space of the smooth sections
of the vector bundle $\widetilde{E}_{\tau}$.
We define the space
\begin{equation}
 C^{\infty}(G;\tau)=\{f:G\rightarrow V_{\tau}\colon f\in C^{\infty}, f(gk)=\tau(k)^{-1}f(g), \forall g\in G, \forall k\in K\}.
\end{equation} 
Similarly, we denote by $C^{\infty}_{c}(G;\tau)$ the subspace of $C^{\infty}(G;\tau)$ of compactly supported functions and 
$L^{2}(G;\tau)$ the completion of $C^{\infty}_{c}(G;\tau)$ with respect to the inner product
\begin{equation*}
 \langle f,h\rangle=\int_{G/K}\langle f(g),h(g)\rangle_{\tau} d\dot{g}.
\end{equation*}
Let $A\colon C^{\infty}(\widetilde{X},\widetilde{E}_{\tau})\rightarrow C^{\infty}(G;\tau)$ be the operator, defined by
\begin{equation*}
 Af(g)=g^{-1}f(gK).
\end{equation*}
Then, the canonical connection on $\widetilde{E}_{\tau}$ is given by
\begin{align*}
 A(\nabla_{d\pi(g)X}^{\tau}f)(g)&=\frac{d}{dt}\bigg|_{t=0}Af(g\exp(tX))\\
 &=\frac{d}{dt}\bigg|_{t=0}(g\exp(tX))^{-1}f(g\exp(tX)K),
\end{align*}
where $g\in G, X\in \mathfrak{p}$, and $f\in C^{\infty}(\widetilde{X},\widetilde{E}_{\tau})$.
By \cite[p. 4]{Mi1}, $A$ induces a canonical isomorphism
\begin{equation}
C^{\infty}(\widetilde{X},\widetilde{E}_{\tau})\cong C^{\infty}(G;\tau).
\end{equation}
Similarly, there exist the following isomorphisms 
\begin{align}
&C_{c}^{\infty}(\widetilde{X},\widetilde{E}_{\tau})\cong C^{\infty}_{c}(G;\tau);\\\notag
&L^{2}(\widetilde{X},\widetilde{E}_{\tau})\cong L^{2}(G;\tau).
\end{align}
We consider the Bochner-Laplace operator associated with $\widetilde{\nabla}^{\tau}$,
\begin{equation*}
 \widetilde{\Delta}_{\tau}=(\widetilde{\nabla}^{\tau})^{*}\widetilde{\nabla}^{\tau}:C_{c}^{\infty}(\widetilde{X},\widetilde{E}_{\tau})\rightarrow L^{2}(\widetilde{X},\widetilde{E}_{\tau}).
\end{equation*}
Let now $\Omega \in Z(\mathfrak{g_{\C}})$ be the Casimir element of $G$. We assume that $\tau$ is irreducible.
Let $\Omega|_{K}\in Z(\mathfrak{k})$ be the Casimir element of $K$ and $\lambda_{\tau}$ the associated Casimir eigenvalue,
where $Z(\mathfrak{k})$ denotes the center of $\mathfrak{k}$.
Then, with respect to the isomorphism (5.2), the Bochner-Laplace operator acting on $C^{\infty}_{c}(G;\tau)$ is given by
\begin{equation}
 \widetilde{\Delta}_{\tau}=-R(\Omega)+\lambda_{\tau}\Id.
\end{equation}
This is proved in \cite[Proposition 1.1]{Mi1}.
The operator $\widetilde{\Delta}_{\tau}$ is an elliptic formally self-adjoint differential operator of second order.
By \cite{Ch}, it is an essentially self-adjoint operator. Its self-adjoint extension will be also denoted by $\widetilde{\Delta}_{\tau}$.\\
We consider the corresponding heat semi-group $e^{-t\widetilde{\Delta}_{\tau}}$ acting on the space $L^2(\widetilde{X}, \widetilde{E}_{\tau})$.
\begin{equation*}
 e^{-t\widetilde{\Delta}_{\tau}}\colon L^2(\widetilde{X}, \widetilde{E}_{\tau})\rightarrow L^2(\widetilde{X}, \widetilde{E}_{\tau}).
\end{equation*}
By \cite[p.467]{CY}, $e^{-t\widetilde{\Delta}_{\tau}},t>0$ is an infinitely smoothing operator with a $C^{\infty}$-kernel, 
i.e.  there exists a smooth function $k_{t}^{\tau}\colon G\times G\rightarrow \End(V_{\tau})$ such that 
\begin{enumerate}
\item it is symmetric in the $G$-variables and for each $g\in G$, the map
$g'\mapsto k_{t}^{\tau}(g,g')$
belongs to $L^2(\widetilde{X}, \widetilde{E}_{\tau})$;
\item it satisfies the covariance property,
\begin{equation*}
 k_{t}^{\tau}(gk,g'k')=\tau^{-1}(k)k_{t}^{\tau}(g,g') \tau(k'), \quad 
 \forall g,g'\in G, k,k'\in K;
\end{equation*}
\item for  $f\in L^{2}(\widetilde{X}, \widetilde{E}_{\tau})$,
\begin{equation}
 e^{-t\widetilde{\Delta}_{\tau}}f(g)=\int_{G}k_{t}^{\tau}(g,g')f(g')dg'.
\end{equation}
\end{enumerate}
The Casimir element is invariant under the action of $G$. Hence, $\widetilde{\Delta}_{\tau}$ is $G$-invariant, and $e^{-t\widetilde{\Delta}_{\tau}}$ is an integral 
operator which commutes with the right regular representation of $G$ in $L^{2}(\widetilde{X},\widetilde{E}_{\tau})$.
Then there exists a function $H_{t}^{\tau}\colon G\rightarrow\End(V_{\tau})$, such that 
\begin{enumerate}
\item  $H_{t}^{\tau}(g^{-1}g')=k_{t}^{\tau}(g,g'),\quad \forall g,g'\in G$;
\item  it satisfies the covariance property
\begin{equation}
 H_{t}^{\tau}(kgk')=\tau^{-1}(k)H_{t}^{\tau}(g) \tau(k'),\quad \forall g\in G, \forall k,k'\in K;
\end{equation}
\item for $f\in L^{2}(\widetilde{X}, \widetilde{E}_{\tau})$,
\begin{equation}
 e^{-t\widetilde{\Delta}_{\tau}}f(g)=\int_{G}H_{t}^{\tau}(g^{-1}g')f(g')dg'.
\end{equation}
\end{enumerate}
We denote by $(\mathcal{C}^{q}(G)\otimes \End(V_{\tau}))^{K\times K}$ the Harish-Chandra $L^{q}$-Schwartz space of functions on $G$
with values in $\End(V_{\tau})$, defined as in \cite[p.161-162]{BM}, such that the covariance property (5.6) is satisfied.
\begin{thm}
Let $t>0$. Then, for every $q>0$
\begin{equation*}
 H_{t}^{\tau}\in (\mathcal{C}^{q}(G)\otimes \End(V_{\tau}))^{K\times K}.
\end{equation*}
\end{thm}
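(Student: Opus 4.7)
The goal is to show that the integral kernel $H_t^\tau$ of the heat semigroup $e^{-t\widetilde{\Delta}_\tau}$ satisfies, together with all its left- and right-invariant derivatives, pointwise bounds that decay faster than any product $\Xi(g)^{2/q}(1+\sigma(g))^{-N}$, where $\sigma(g)=d(eK,gK)$ and $\Xi$ is the elementary spherical function. My strategy is to first establish Gaussian pointwise bounds on $H_t^\tau$ using the explicit description (5.4), then upgrade via the semigroup property to bounds on all derivatives, and finally compare against $\Xi$.

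The first step is to establish a pointwise Gaussian estimate of the form
\[
\|H_t^\tau(g)\|_{\End(V_\tau)}\le C(t)\,e^{-\sigma(g)^2/(5t)},\qquad g\in G,\ t\in(0,T],
\]
for every fixed $T>0$. Because $\widetilde{\Delta}_\tau=-R(\Omega)+\lambda_\tau\,\Id$ has the same principal symbol as the scalar Laplacian on $\widetilde X=\H^d$ and differs only by a zeroth-order term, one may dominate $H_t^\tau$ pointwise by the scalar heat kernel on $\H^d$ via a Kato-type inequality (in the form used by Hess--Schrader--Uhlenbrock for covariant Bochner operators). The scalar heat kernel on real hyperbolic space has a classical explicit expression with Gaussian decay in the hyperbolic distance, which yields the displayed estimate after absorbing $e^{-t\lambda_\tau}$ into $C(t)$. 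The $K\times K$-covariance (5.6) follows automatically from the $G$-invariance of $\Omega$ and the way $\tau$ enters.

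To upgrade this to the Harish-Chandra Schwartz space, I would first pass to derivative estimates by exploiting the semigroup property $e^{-t\widetilde{\Delta}_\tau}=e^{-(t/2)\widetilde{\Delta}_\tau}\circ e^{-(t/2)\widetilde{\Delta}_\tau}$, which at the kernel level reads $H_t^\tau=H_{t/2}^\tau * H_{t/2}^\tau$. Since $\widetilde{\Delta}_\tau$ commutes with the right regular representation of $G$, applying $R(Y)$ to $H_t^\tau$ amounts to inserting $R(Y)H_{t/2}^\tau$ into one factor of the convolution; combined with parabolic regularity for the elliptic operator $\widetilde{\Delta}_\tau$, this allows one to transfer $L^2$-bounds on $R(Y)H_{t/2}^\tau$ into pointwise Gaussian bounds on $R(Y)H_t^\tau$ of the same form as above (with slightly adjusted constants). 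Iterating, one obtains Gaussian bounds for $L(X_1)\cdots L(X_k) R(Y_1)\cdots R(Y_\ell) H_t^\tau$ for arbitrary $X_i,Y_j$ in the universal enveloping algebra.

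Finally, I would compare the resulting Gaussian bounds to the weight $\Xi(g)^{2/q}(1+\sigma(g))^{-N}$. Using the Cartan decomposition $g=k_1\exp(H)k_2$ with $H\in\overline{\mathfrak{a}^+}$, one has $\sigma(g)=\|H\|$ and the standard estimate $\Xi(g)\le C(1+\|H\|)^{d}e^{-\rho(H)}$, so $\Xi(g)^{2/q}$ decays only exponentially in $\sigma(g)$ while $e^{-\sigma(g)^2/(5t)}$ decays quadratically. Hence every Schwartz seminorm of $H_t^\tau$ is finite for every $q>0$ and $t>0$, giving the claim. The main obstacle is the derivative step: to apply the semigroup-plus-parabolic-regularity argument uniformly in the fiber variable one must carry the endomorphism norm through the convolution and the Sobolev embedding, which forces some care in tracking constants but presents no structural difficulty beyond what is available in the elliptic regularity theory of \cite{Sh} used earlier in Section 4.
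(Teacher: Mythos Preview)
Your approach is viable, but note that the paper does not actually supply a proof here: it simply cites \cite[Proposition 2.4]{BM}. The Barbasch--Moscovici argument is representation-theoretic rather than PDE-based: it uses the explicit formula (5.9), namely $\pi(H_t^\tau)=e^{-t(-\pi(\Omega)+\lambda_\tau)}\Id$ on each irreducible unitary $\pi$, together with Harish-Chandra's Plancherel theory and the characterization of the Schwartz spaces $\mathcal{C}^q(G)$ in terms of decay of the operator-valued Fourier transform. Your route trades this global harmonic analysis on $G$ for local heat-kernel machinery (Kato/Hess--Schrader--Uhlenbrock domination by the scalar kernel on $\H^d$, semigroup factorization, and the Gaussian-versus-exponential comparison with $\Xi$). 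The advantage of your approach is that it is self-contained and uses only standard parabolic tools; the advantage of the cited argument is that the derivative step you single out as the main obstacle becomes automatic, since rapid decay on the Fourier side controls all Schwartz seminorms at once. If you do pursue your route, a cleaner way to obtain Gaussian bounds on \emph{all} $L(X)R(Y)H_t^\tau$ simultaneously is to write $e^{-t\widetilde\Delta_\tau}$ as a Gaussian transform of the wave propagator $\cos(s\sqrt{\widetilde\Delta_\tau})$ and invoke finite propagation speed: differentiation preserves the support condition on the wave kernel, so the Gaussian decay comes entirely from the $s$-integral rather than from iterating the semigroup identity.
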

\begin{proof}
 This is proved in \cite[Proposition 2.4]{BM}.
\end{proof}
In \cite[p.161]{BM}, it is proved that 
\begin{equation*}
 e^{{-t\widetilde{\Delta}_{\tau}}}=R_{\Gamma}(H_{t}^{\tau}),
\end{equation*}
where $R_{\Gamma}(H_{t}^{\tau})$ denotes the bounded trace class operator,
induced by the right regular representaion of $G$, acting on $C^{\infty}(G;\tau)$.
It is decribed by the formula
\begin{equation*}
  e^{-t\widetilde{\Delta}_{\tau}}f(g)=\int_{G}H_{t}^{\tau}(g^{-1}g')f(g')dg'.
\end{equation*}
More generally, we consider a unitary admissible representation $\pi$ of $G$ in a Hilbert space $\mathcal{H}_{\pi}$. We set
\begin{equation*}
 \widetilde{\pi}(H_{t}^{\tau})=\int_{G}\pi(g)\otimes H_{t}^{\tau}(g)dg.
\end{equation*}
This defines a bounded trace class operator on $\mathcal{H}_{\pi}\otimes V_{\tau}$.
By \cite[p.160-161]{BM}, relative to the splitting
\begin{equation*}
 \mathcal{H}_{\pi}\otimes V_{\tau}=(\mathcal{H}_{\pi}\otimes V_{\tau})^K\oplus [(\mathcal{H}_{\pi}\otimes V_{\tau})^K]^{\perp},
\end{equation*}
$ \widetilde{\pi}(H_{t}^{\tau})$ has the form
\begin{equation}
  \widetilde{\pi}(H_{t}^{\tau})= \begin{pmatrix}
\pi(H_{t}^{\tau}) & 0 \\
 0 & 0
\end{pmatrix},
\end{equation}
with $\pi(H_{t}^{\tau})$ acting on $(\mathcal{H}_{\pi}\otimes V_{\tau})^K$.
Then, it follows that 
\begin{equation}
 e^{-t(-\pi(\Omega)+\lambda_{\tau})}\Id=\pi(H_{t}^{\tau}),
\end{equation}
where $\Id$ denotes the identity on the space $(\mathcal{H}_{\pi}\otimes V_{\tau})^K$ (\cite[Corollary 2.2]{BM}).
We let 
\begin{equation*}
 h_{t}^{\tau}(g):=\tr H_{t}^{\tau}(g).
\end{equation*}
We consider orthonormal bases $(\xi_{n}), n\in \N, (e_{j}), j=1,\cdots,k$ of the vector spaces $\mathcal{H}_{\pi}, V_{\tau}$, respectively, where $k:=\dim(V_{\tau})$.
By (5.8),
\begin{equation}
 \Tr(\pi(H_{t}^{\tau}))=\Tr(\widetilde{\pi}(H_{t}^{\tau})).
\end{equation}
We have
\begin{align}
 \Tr(\widetilde{\pi}(H_{t}^{\tau}))=\notag&\sum_{n}\sum_{j}\langle \widetilde{\pi}(H_{t}^{\tau})(\xi_n\otimes e_j),(\xi_n\otimes e_j)\rangle\\\notag
 &=\sum_{n}\sum_{j}\int_{G}\langle \pi(g)\xi_n,\xi_n\rangle\langle H_{t}^{\tau}(g)e_j,e_j\rangle dg\\\notag
 &=\sum_{n}\int_{G}\langle\pi(g)\xi_n,\xi_n\rangle h_{t}^{\tau}(g)dg\\\notag
 &=\sum_{n}\langle\pi(h_{t}^{\tau})\xi_n,\xi_n\rangle\\
 &=\Tr\pi(h_{t}^{\tau}).
 \end{align}
Hence, if we combine equations (5.9), (5.10) and (5.11), we get
\begin{equation}
 \Tr\pi(h_{t}^{\tau})=e^{-t(-\pi(\Omega)+\lambda_{\tau})}\dim(\mathcal{H}_{\pi}\otimes V_{\tau})^K.
\end{equation}
Now we want to specify the unitary representation $\pi$ of $G$. We consider the unitary principal series representation $\pi_{\sigma,\lambda}$,
defined in section 2.
Our goal is to compute the Fourier transform of $h_{t}^{\tau}$,
\begin{equation*}
\Theta_{\sigma,\lambda}(h_{t}^{\tau})=\Tr\pi_{\sigma,\lambda}(h_{t}^{\tau}).
\end{equation*}
\begin{prop}
 For $\sigma \in \widehat{M}$ and $\lambda \in \R$, let $\Theta_{\sigma, \lambda}$ be the global character of $\pi_{\sigma,\lambda}$.
 Let $\tau\in \widehat{K}$. Then,
 \begin{equation}
  \Theta_{\sigma,\lambda}(h_{t}^{\tau})=e^{-t(-\pi_{\sigma,\lambda}(\Omega)+\lambda_{\tau})}.
  \end{equation}
\end{prop}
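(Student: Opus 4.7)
The plan is to derive the proposition as a direct specialization of the trace identity (5.12) to the unitary principal series $\pi=\pi_{\sigma,\lambda}$. For $\lambda\in\R$ this representation is unitary, and being parabolically induced from the irreducible representation $\sigma$ of $M$ it is admissible, so the whole chain of operator identities (5.8)--(5.12) applies with $\pi=\pi_{\sigma,\lambda}$ without modification.

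The first step is to identify the left-hand side. By definition the global character of $\pi_{\sigma,\lambda}$ assigns to a Schwartz function $f$ on $G$ the operator trace $\Tr\pi_{\sigma,\lambda}(f)$; Theorem 5.1 places $h_t^{\tau}$ in $\mathcal{C}^q(G)$ for every $q>0$, so $\pi_{\sigma,\lambda}(h_t^{\tau})$ is of trace class and $\Theta_{\sigma,\lambda}(h_t^{\tau})=\Tr\pi_{\sigma,\lambda}(h_t^{\tau})$. Substituting $\pi=\pi_{\sigma,\lambda}$ into (5.12) then gives
\[
\Theta_{\sigma,\lambda}(h_t^{\tau})=e^{-t(-\pi_{\sigma,\lambda}(\Omega)+\lambda_{\tau})}\cdot\dim(\mathcal{H}_{\pi_{\sigma,\lambda}}\otimes V_{\tau})^{K},
\]
reducing the claim to showing that the dimension factor equals one.

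To compute this dimension, I would use the Iwasawa decomposition $G=KP$, which makes the restriction $\pi_{\sigma,\lambda}|_K$ isomorphic to $\Ind_M^K\sigma$ independently of $\lambda$, and then apply Frobenius reciprocity to obtain
\[
(\mathcal{H}_{\pi_{\sigma,\lambda}}\otimes V_{\tau})^{K}\cong \mathrm{Hom}_K(V_\tau^*,\Ind_M^K\sigma)\cong \mathrm{Hom}_M(V_\tau^*|_M,V_\sigma).
\]
Since $(K,M)$ is a Gelfand pair in the rank-one compact setting considered here (either $(\Spin(d),\Spin(d-1))$ or $(\SO(d),\SO(d-1))$), the branching from $\widehat K$ to $\widehat M$ is multiplicity-free, so this Hom space is at most one-dimensional. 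Under the standing convention that $\sigma$ occurs in $\tau|_M$ -- which is implicit in the very definition of the twisted operator $A_{\tau,\chi}^{\sharp}(\sigma)$ -- the dimension equals exactly one and the stated identity follows.

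The main obstacle is conceptual rather than computational: one must justify that the block decomposition (5.7), together with the scalar action of the Casimir in (5.9), remains valid in the infinite-dimensional admissible setting of the principal series, and one must invoke the Gelfand-pair property to pin down the dimension factor. Both inputs are classical for $\Spin(d,1)$, and once they are in place the proof reduces to the substitution above.
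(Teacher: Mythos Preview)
Your argument is essentially the same as the paper's: you specialize (5.12) to $\pi=\pi_{\sigma,\lambda}$, identify the $K$-invariants via Frobenius reciprocity with $[\tau|_M:\sigma]$, and then invoke multiplicity-freeness of the branching (you phrase it via the Gelfand-pair property of $(K,M)$, the paper cites Goodman--Wallach). Your explicit remark that one needs $\sigma$ to occur in $\tau|_M$ for the dimension factor to equal $1$ is a point the paper leaves implicit in the statement but uses later in the form $\Theta_{\sigma,\lambda}(q_t^{\tau})=e^{-t(-\pi_{\sigma,\lambda}(\Omega))}[\tau|_M:\sigma]$.
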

\begin{proof}
 We have
 \begin{equation*}
  \Theta_{\sigma,\lambda}(h_{t}^{\tau})=e^{-t(-\pi_{\sigma,\lambda}(\Omega)+\lambda_{\tau})}\dim(\mathcal{H}_{\pi_{\sigma,\lambda}}\otimes V_{\tau})^K
=e^{-t(-\pi_{\sigma,\lambda}(\Omega)+\lambda_{\tau})}[\pi_{\sigma,\lambda}:\check{\tau}],
  \end{equation*}
where $\check{\tau}$ denotes the contragredient representation of $\tau$.
We have
\begin{align*}
\Theta_{\sigma,\lambda}(h_{t}^{\tau})=&e^{-t(-\pi_{\sigma,\lambda}(\Omega)+\lambda_{\tau})}[\pi_{\sigma,\lambda}:\check{\tau}]\\
&e^{-t(-\pi_{\sigma,\lambda}(\Omega)+\lambda_{\tau})}[\pi_{\sigma,\lambda}:{\tau}]=e^{-t(-\pi_{\sigma,\lambda}(\Omega)+\lambda_{\tau})}[\tau\mid_{M}:\sigma].
\end{align*}
In the last line in the equation above we use the Frobenius reciprocity principle, 
which is described for compact Lie groups in (\cite[Theorem 1.14]{Knapp}).
By (\cite[p.208]{Knapp}), one has
\begin{equation*}
 [\pi_{\sigma,\lambda}|_{K}:{\tau}]=\sum_{\omega\in (M\cap K)^{\widehat{}}}n_{\omega}
 [\tau\mid_{M\cup K}:\omega],
\end{equation*}
where $n_{\omega}$ are positive integers.
But, in our case $M\subset K$ and therefore $M\cap K=M$. \\
Hence,
\begin{equation*}
[\pi_{\sigma,\lambda}|_{K}:{\tau}]=[\tau\mid_{M}:\sigma].
\end{equation*}
By \cite[Theorem 8.1.3]{GW}, $K$ is multiplicity free in $G$, i.e. $[\pi_{\sigma,\lambda}:{\tau}]\leq 1$.
The assertion follows.
\end{proof}
We pass now to $X=\Gamma\backslash \widetilde{X}$. We consider the
locally homogeneous vector bundle
\begin{equation*}
 E_{\tau}:=\Gamma\backslash\widetilde{E}_{\tau}\rightarrow X.
\end{equation*}
Let  $E_{\chi}$ be the flat vector bundle over $X$.
We want to derive a trace formula for the heat operator $e^{-t\Delta_{\tau,\chi}^{\sharp}}$.
By Lemma 2.4 and Proposition 2.5 in \cite{M1}, $e^{-t\Delta_{\tau,\chi}^{\sharp}}$
is an integral operator with smooth kernel and of trace class.

We can apply the Lidskii's theorem,
which gives a general expression for the trace of a trace class (not necessarily self-adjoint) operator                              
in terms of its eigenvalues.
By \cite[Theorem 3.7]{SB},
\begin{equation}
 \Tr e^{-t\Delta_{\tau,\chi}^\sharp}=\sum_{\lambda_{j}\in\spec(\Delta_{\tau,\chi}^\sharp)}m(\lambda_{j})e^{-t\lambda_{j}},
\end{equation}
where $m(\lambda_{j})$ is in the Definition 4.6.
The kernel function $H^{\tau,\chi}_{t}$ of the integral operator $e^{-t\Delta_{\tau,\chi}^\sharp}$ is a smooth section of 
$(E_{\tau}\otimes E_{\chi})\otimes(E_{\tau}\otimes E_{\chi})^{*}$, 
i.e.,
\begin{equation*}
 H^{\tau,\chi}_{t}\in C^{\infty}(X,(E_{\tau}
 \otimes E_{\chi})\otimes(E_{\tau}\otimes E_{\chi})^{*}).
\end{equation*}
It can be expressed as
\begin{equation*}
 H^{\tau,\chi}_{t}(x,y)=\sum_{\gamma \in \Gamma}\widetilde{H}^{\tau}_{t}(\widetilde{x},\gamma\widetilde{y})\otimes \chi(\gamma)\Id_{V_{\chi}},
\end{equation*}
where $\widetilde{x},\widetilde{y}$ are lifts of $x,y$ to $\widetilde{X}$, respectively, and
$\widetilde{H}^{\tau}_{t}$ is the kernel of $e^{-t\widetilde{\Delta}_{\tau}}$.
By \cite[Proposition 4.1]{M1}, we have the following proposition.
\begin{prop}
Let $E_{\chi}$ be a flat vector bundle over $X=\Gamma\backslash \widetilde{X}$ associated with a finite dimensional complex
representation $\chi\colon\Gamma\rightarrow \GL(V_{\chi})$ of $\Gamma$. Let $\Delta_{\tau,\chi}^{\sharp}$ be the twisted Bochner-Laplace operator
acting on $C^{\infty}(X,E_{\tau}\otimes E_{\chi})$.
Then, 
\begin{align}
 \Tr(e^{-t\Delta_{\tau,\chi}^{\sharp}})=\notag&\sum_{\lambda_{j}\in\spec(\Delta_{\tau,\chi}^\sharp)}m(\lambda_{j})e^{-t\lambda_{j}}\\
&=\sum_{\gamma \in \Gamma}\tr \chi(\gamma)\int_{\Gamma\backslash G}\tr H_{t}^{\tau}(g^{-1}\gamma g)d\dot{g}.
\end{align}
\end{prop}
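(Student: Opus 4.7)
The plan is to prove the two equalities of the proposition separately. The first equality
$$\Tr(e^{-t\Delta_{\tau,\chi}^\sharp})=\sum_{\lambda_j\in\spec(\Delta_{\tau,\chi}^\sharp)}m(\lambda_j)e^{-t\lambda_j}$$
is just Lidskii's theorem, already recorded as equation (5.14); it applies because $e^{-t\Delta_{\tau,\chi}^\sharp}$ is trace class by the results of \cite{M1} cited immediately above the statement. The substance of the proposition is the second equality, which I would derive by a classical method-of-images unfolding adapted to the non-unitary setting.

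The key steps are as follows. First, by the tensor decomposition (4.4), $\widetilde{\Delta}_{\tau,\chi}^\sharp=\widetilde{\Delta}_\tau\otimes\Id_{V_\chi}$, so the integral kernel of $e^{-t\Delta_{\tau,\chi}^\sharp}$ on $X\times X$ is obtained by averaging the lifted kernel of $e^{-t\widetilde{\Delta}_\tau}$ over $\Gamma$ twisted by $\chi$:
$$H_t^{\tau,\chi}(x,y)=\sum_{\gamma\in\Gamma}\widetilde H_t^\tau(\widetilde x,\gamma\widetilde y)\otimes \chi(\gamma).$$
Second, I would express $\Tr(e^{-t\Delta_{\tau,\chi}^\sharp})$ as $\int_X \tr H_t^{\tau,\chi}(x,x)\,dx$ and pass to an integral over $\Gamma\backslash G$ using the canonical isomorphism between sections of $E_\tau$ and $\tau$-equivariant functions on $G$ from (5.2). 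Third, I would use the product rule $\tr(A\otimes\chi(\gamma))=\tr(A)\tr\chi(\gamma)$ together with the $G$-invariance identity $\widetilde H_t^\tau(g,\gamma g)=H_t^\tau(g^{-1}\gamma g)$ coming from (5.5)--(5.7), which after interchanging the summation over $\Gamma$ with the integration over $\Gamma\backslash G$ yields precisely the asserted expression.

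The main obstacle is justifying this interchange, and this is exactly the step where the non-unitarity of $\chi$ matters. In the unitary case one has $|\tr\chi(\gamma)|\leq\dim V_\chi$ and Müller's argument in \cite[Proposition 4.1]{M1} applies verbatim; here Lemma 3.3 only gives $|\tr\chi(\gamma)|\leq Ke^{k\,l(\gamma)}$. To close the estimate I would combine this bound with the standard Gaussian off-diagonal decay of the matrix-valued heat kernel $\widetilde H_t^\tau$ on $\widetilde X$, an estimate of the shape $\|\widetilde H_t^\tau(\widetilde x,\gamma\widetilde x)\|\leq C(t)\exp(-d(\widetilde x,\gamma\widetilde x)^2/(5t))$, and the lattice-point counting bound (3.15). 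Because the Gaussian decay in $l(\gamma)^2$ dominates both the exponential growth of the character and the exponential orbit-counting, absolute convergence holds for every $t>0$, Fubini applies, and the trace formula follows.
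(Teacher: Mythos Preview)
Your proposal is correct and essentially reconstructs the argument that the paper simply outsources: in the paper this proposition is not proved at all, it is just stated with the sentence ``By \cite[Proposition 4.1]{M1}, we have the following proposition.'' So there is no in-paper proof to compare against beyond that citation.

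Your sketch is the standard unfolding argument and matches the structure one finds in \cite{M1}: the first equality is indeed Lidskii (the paper's (5.14)), the kernel formula $H_t^{\tau,\chi}(x,y)=\sum_{\gamma}\widetilde H_t^\tau(\widetilde x,\gamma\widetilde y)\otimes\chi(\gamma)$ is recorded just above the proposition, and the passage from sections of $E_\tau$ to $\tau$-equivariant functions on $G$ via (5.2) together with the $G$-invariance of the scalar heat kernel (5.5)--(5.7) gives the integrand $\tr H_t^\tau(g^{-1}\gamma g)$. Your identification of the only genuine issue---absolute convergence of the $\Gamma$-sum when $\chi$ is non-unitary---and your proposed fix (Gaussian off-diagonal decay of $\widetilde H_t^\tau$ beating the exponential bound of Lemma~3.3 and the counting estimate (3.15)) is exactly the right mechanism, and is how \cite{M1} handles it. There is no gap.
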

We proceed further to obtain a better version of the trace formula, analyzing the above identity in orbital integrals.
We group together into the conjugacy classes $[\gamma]$ of $\Gamma$, and
we write separately the conjugacy class of the identity element $e$ to get
\begin{align}
\Tr(e^{-t\Delta_{\tau,\chi}^{\sharp}})=\notag&\dim(V_{\chi})\Vol(X)\tr H_{t}^{\tau}(e)\\
&+\sum_{[\gamma]\neq e}\tr\chi(\gamma)\Vol(\Gamma_{\gamma}\backslash G_{\gamma})\int_{G_{\gamma}\backslash G} \tr H_{t}^{\tau}(g^{-1}\gamma g)d\dot{g},
\end{align}
where $\Gamma_{\gamma}$ and $G_{\gamma}$ are the centralizers of $\gamma$ in $\Gamma$ and $G$, respectively.

We are interested in the zeta functions associated with a geodesic flow
on the bundle $E(\sigma,\chi):=G\times_{\sigma\otimes\chi}(V_\sigma\otimes V_\gamma)\rightarrow S(X)$
as it is explained in section 3.
Let $R(M)^{+}$ and $R(M)^{-}$ be the subspaces of the elements of $R(M)$ that are invariant, respectively not invariant, under the action of the Weyl group $W_{A}$.
More precisely, since the order of the Weyl group $W_{A}$ is two, there is an eigenspace decomposition of $R(M)$ into $R(M)^{+}$ and  $R(M)^{-}$.
The subspaces $R(M)^{\pm}$ correspond to the $(\pm1)$-eigenspaces, respectively.
\begin{prop}
\begin{enumerate}
 \item The map $i^{*}$ is a bijection between $R(K)$ and $R(M)^{+}$
 \item If $\sigma\in R(M)^{-}$, then there exists a unique element $\tau(\sigma)\in \widehat{K}$, 
 with highest weight $\nu_{\tau}=\big((\nu_{1}-\frac{1}{2})e_{1},\ldots,(\nu_{n}-\frac{1}{2})e_{n}\big)$ and
 $\nu_{n}(\sigma)>0$,
 such that (4.41), 
 \begin{equation}
  \sigma-w\sigma=(s^{+}-s^{-})i^{*}(\tau(\sigma)),
 \end{equation}
where $s^{+}, s^{-}$ are the half spin representations of $M$.
More precisely, if $s$ is the spin representation of $K$, then $\tau(\sigma)\otimes s$ splits into
\begin{equation}
 \tau(\sigma)\otimes s=\tau^{+}(\sigma)\oplus\tau^{-}(\sigma)
\end{equation}
 such that
\begin{equation}
 \sigma+w\sigma=i^{*}(\tau^{+}(\sigma)-\tau^{-}(\sigma)),
\end{equation}
with
\begin{equation}
 \tau^{\pm}(\sigma)=\sum_{\substack{\mu\in\{0,1\}^{n}\\c(\mu)=\pm 1}}(-1)^{c(\mu)}\tau_{\nu_{\mu}}(\sigma),
\end{equation}
where $c(\mu):=\sharp \{1\in\mu\}$, $\tau_{\nu_{\mu}}(\sigma)$ is the representation
of $K$ with highest weight $\nu_{\mu}(\sigma)=\nu_{\sigma}-\mu$, and $\nu_{\sigma}$
is given by \emph{(2.5)}.
\end{enumerate}
\end{prop}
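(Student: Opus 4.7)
The plan is to establish the proposition using the branching rule from $\Spin(2n+1)$ down to $\Spin(2n)$ combined with Weyl character manipulations. Part (1) is a structural statement about this branching, while part (2) requires a careful tensoring-with-spin argument.

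For part (1), I would invoke the classical branching rule: for $\tau \in \widehat{K}$ with highest weight $\nu_\tau = (\nu_1, \ldots, \nu_n)$, the restriction $i^*(\tau)$ decomposes multiplicity-freely as a sum over $\sigma \in \widehat{M}$ with highest weights $(\mu_1, \ldots, \mu_n)$ satisfying the interlacing $\nu_1 \geq \mu_1 \geq \nu_2 \geq \ldots \geq \nu_n \geq |\mu_n|$. Since this interlacing is invariant under $\mu_n \mapsto -\mu_n$, each summand indexed by $(\mu_1, \ldots, \mu_{n-1}, \mu_n)$ appears paired with its Weyl conjugate, so $i^*(\tau) \in R(M)^+$. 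Injectivity of $i^*$ on $R(K)$ follows by recovering $\nu_\tau$ as the maximal weight appearing in $i^*(\tau)$ in dominant order. For surjectivity onto $R(M)^+$, I would argue by induction on the highest weight: if $\sigma$ has highest weight $(\nu_1, \ldots, \nu_n)$ with $\nu_n \geq 0$, then $i^*(\tau')$ for $\tau' \in \widehat{K}$ with the same highest weight has $\sigma + w\sigma$ as its leading term (or $\sigma$ itself when $\nu_n = 0$), and the correction terms involve strictly smaller highest weights handled by the induction hypothesis.

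For part (2), given $\sigma \in R(M)^-$ with highest weight $\nu_\sigma = (\nu_1, \ldots, \nu_n)$ and $\nu_n > 0$, I would take $\tau(\sigma) \in \widehat{K}$ with highest weight $(\nu_1 - \tfrac{1}{2}, \ldots, \nu_n - \tfrac{1}{2})$; uniqueness is immediate from the highest-weight parameterization. To verify $\sigma - w\sigma = (s^+ - s^-) \cdot i^*(\tau(\sigma))$, I would compare both sides as virtual characters on a maximal torus of $M$. The character of $s^+ - s^-$ factorizes as $\prod_{j=1}^{n}(e^{x_j/2} - e^{-x_j/2})$, and combining this with the Weyl character formula for $M$ and the branching of $\tau(\sigma)|_M$ reduces the identity to a manipulation of alternating sums over the Weyl group of $M$. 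For the decomposition $\tau(\sigma) \otimes s = \tau^+(\sigma) \oplus \tau^-(\sigma)$ and the explicit formula for $\tau^\pm(\sigma)$, I would observe that the weights of $s$ are the $2^n$ vectors $(\pm \tfrac{1}{2}, \ldots, \pm \tfrac{1}{2})$; tensoring $\tau(\sigma)$ with $s$ shifts its highest weight by each such vector, and grouping the resulting highest weights $\nu_\sigma - \mu$ for $\mu \in \{0,1\}^n$ by the parity $c(\mu)$ of the number of $1$'s yields the claimed expression for $\tau^\pm(\sigma)$. The identity $\sigma + w\sigma = i^*(\tau^+(\sigma) - \tau^-(\sigma))$ then follows by applying $i^*$ to the decomposition of $\tau(\sigma) \otimes s$ and using the branching rule once more.

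The main obstacle will be the character computation underlying the identity involving $s^+ - s^-$. Keeping the signs straight requires a careful comparison between the Weyl denominators of $\Spin(2n+1)$ and $\Spin(2n)$ and a correct identification of the half-spin characters of $M$. I would handle this by expressing $\sigma - w\sigma$ as the unique Weyl-anti-invariant extension of $\chi_\sigma$ (using that the nontrivial element $w \in W_A$ acts on the weight of $\sigma$ by sign change of the last coordinate) and matching it term-by-term with the right-hand side via the product expansion of $\chi_{s^+} - \chi_{s^-}$ and the branching of $\tau(\sigma)$ to $M$. Once this core character identity is in place, the remaining decomposition formulas reduce to bookkeeping of weight shifts and parities.
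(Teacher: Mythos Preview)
The paper does not actually prove this proposition: its entire proof is the single line ``See \cite[Proposition 1.1]{BO}.'' Your proposal therefore cannot be compared against the paper's own argument, because there is none; rather, you are supplying the proof that the paper outsources to Bunke--Olbrich.

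That said, your sketch follows the standard route one would expect (and essentially the one taken in \cite{BO}): the classical interlacing branching rule for $\Spin(2n+1)\downarrow\Spin(2n)$ gives part (1), and part (2) comes from the product formula $\chi_{s^{+}}-\chi_{s^{-}}=\prod_{j}(e^{x_{j}/2}-e^{-x_{j}/2})$ together with the Weyl character/denominator comparison between $K$ and $M$. One point to tighten: when you write that tensoring $\tau(\sigma)$ with $s$ ``shifts its highest weight by each'' spin weight and then group by parity, you are implicitly invoking a Kostant/Parthasarathy-type tensor decomposition, which in general requires that all the shifted weights $\nu_{\sigma}-\mu$ remain dominant (or else produces cancellations via the dot-action). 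Under the hypothesis $\nu_{n}(\sigma)>0$ this is fine, but you should say so explicitly rather than treat it as mere bookkeeping.
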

\begin{proof}
 See \cite[Proposition 1.1]{BO}.
\end{proof}
Let $\tau_{\sigma}\in R(K)$ with $\tau_{\sigma}:=\tau^{+}(\sigma)-\tau^{-}(\sigma)$.
By Proposition 5.4, there exist unique integers
$m_{\tau}(\sigma)\in\{-1,0,1\}$, which are equal to zero except for finitely many $\tau\in \widehat{K}$,
such that 
\begin{equation}
 \sigma=\sum_{\tau\in\widehat{K}}m_{\tau}(\sigma)i^{*}(\tau);    
 \end{equation}
Then, the locally homogeneous vector bundle $E(\sigma)$ associated with $\tau$ is of the form
\begin{equation}
 E(\sigma)=\bigoplus_{\substack{\tau\in\widehat{K}\\m_{\tau}(\sigma)\neq 0}}E_{\tau},
\end{equation}
where $E_{\tau}$ is the locally homogeneous vector bundle associated with $\tau\in\widehat{K}$.
Therefore, the vector bundle $E(\sigma)$ has a grading $E(\sigma)=E(\sigma)^{+}\oplus E(\sigma)^{-}$.
This grading is defined exactly by the positive or negative sign of $m_{\tau}(\sigma)$.
Let $\widetilde{E}(\sigma)$ be the pullback of $E(\sigma)$ to $\widetilde{X}$.
Then,
\begin{equation*}
 \widetilde{E}(\sigma)=\bigoplus_{\substack{\tau\in\widehat{K}\\m_{\tau}(\sigma)\neq0}}\widetilde{E}_{\tau}.
\end{equation*}
We consider now the lift $\widetilde{\Delta}_{\tau}$ of the Bochner-Laplace operator ${\Delta_{\tau}}$ associated to $\tau\in\widehat{K}$ to $\widetilde{X}$, acting on smooth sections of $\widetilde{E}_{\tau}$.
Recall equation (5.4):
\begin{equation*}
 \widetilde{\Delta}_{\tau}=-R(\Omega)+\lambda_{\tau}\Id.
\end{equation*}
We put
\begin{equation}
 \widetilde{A}_{\tau}:=\widetilde{\Delta}_{\tau}-\lambda_{\tau}\Id.
\end{equation}
Hence, by (5.4) the operator $\widetilde{A}_{\tau}$ acts like $-R(\Omega)$ on the space of smooth sections of $\widetilde{E}_{\tau}$. 
It is an elliptic formally self-adjoint operator of second order. By \cite{Ch}, it is an essentially self-adjoint operator.
Its self-adjoint extension will be also denoted by $\widetilde{A}_{\tau}$.

We use the flat Laplacian $\widetilde{\Delta}^{\sharp}_{\tau,\chi}$ on the universal covering $\widetilde{X}$.
We get then the operator $\widetilde{A}^{\sharp}_{\tau,\chi}$ acting on the space $C^{\infty}(\widetilde{X},\widetilde{E}_{\tau}\otimes\widetilde{E}_{\chi})$.
Since $\widetilde{A}^{\sharp}_{\tau,\chi}$ is induced by the operator $\widetilde{\Delta}^{\sharp}_{\tau,\chi}$, 
it can be locally described as
\begin{equation}
 \widetilde{A}^{\sharp}_{\tau,\chi}=\widetilde{A}_{\tau}\otimes\Id_{V_{\chi}}.
\end{equation}
We pass to $X=\Gamma\backslash \widetilde{X}$. 
We put 
\begin{equation}
 c(\sigma):=-\lvert \rho \rvert^{2}-\lvert\rho_{m}\rvert^{2}+\lvert \nu_{\sigma}+\rho_{m}\rvert^{2},
\end{equation}
where $\nu_{\sigma}$ is the highest weight of $\sigma\in\widehat{M}$ as in (2.5)
and $\rho,\rho_{m}$ are defined by (2.3) and (2.4), respectively.
We define the operator $A_{\chi}^{\sharp}(\sigma)$ acting on $C^{\infty}(X,E(\sigma)\otimes E_{\chi})$ by
\begin{equation}
 A_{\chi}^{\sharp}(\sigma):=\bigoplus_{m_{\tau}(\sigma)\neq 0}A_{\tau,\chi}^{\sharp}+c(\sigma).
\end{equation}
Obviously, $A_{\chi}^{\sharp}(\sigma)$ preserves the grading.
It is an elliptic operator of order two. However, the situation is now different,
because it is not a self-adjoint operator anymore. This property is carried by the operator $A^{\sharp}_{\tau,\chi}$.

We deal first with the corresponding heat semi-group generated by the operator $e^{-tA_{\tau,\chi}^{\sharp}}$. 
Since $A_{\tau,\chi}^{\sharp}$ is induced by $\Delta^{\sharp}_{\tau,\chi}$, it is an integral operator with smooth kernel.
By Proposition 5.3, its trace is given by
\begin{equation}
 \Tr(e^{-tA_{\tau,\chi}^{\sharp}})=\sum_{\gamma \in \Gamma}\tr \chi(\gamma)\int_{\Gamma\backslash G}\tr Q_{t}^{\tau}(g^{-1}\gamma g)d\dot{g},
\end{equation}
where  $Q^{\tau}_{t}\in (\mathcal{C}^{q}(G)\otimes \End(V_{\tau}))^{K\times K}$ is the kernel associated to the operator
$e^{-t\widetilde{A}_{\tau}}$.
We put
\begin{equation*}
 q_{t}^{\tau}=\tr Q_{t}^{\tau}(g).
\end{equation*} 
\begin{align}
q_{t}^{\sigma}=&\sum_{\tau \in \widehat{K}}m_{\tau}(\sigma) q_{t}^{\tau},\\
K(t;\sigma)=&\sum_{\tau \in \widehat{K}}m_{\tau}(\sigma) \Tr(e^{-tA^{\sharp}_{\tau,\chi}}).
\end{align}
We use now the trace formula from \cite[p.177-178]{Wa}, which expands (5.27). We have
 \begin{align}
  K(t;\sigma)=\notag&\dim(V_{\chi})\Vol(X)q_{t}^{\sigma}(e)\\
  &+\frac{1}{2\pi}\sum_{[\gamma]\neq e} \frac{l(\gamma)\tr(\chi(\gamma))}{n_{\Gamma}(\gamma)D(\gamma)}\sum_{\sigma\in\widehat{M}}
  \overline{\tr\sigma(m_{\gamma})}\int_{\R}\Theta_{\sigma,\lambda}(q_{t}^{\sigma})e^{-il(\gamma)\lambda}d\lambda.
\end{align}
We continue analysing the trace formula above in terms of characters. For the identity contribution we have 
\begin{equation}
 (q_{t}^{\sigma})(e)=\sum_{\sigma\in\widehat{M}}\int_{\R}\Theta_{\sigma,\lambda}(q_{t}^{\sigma})P_{\sigma}(i\lambda)d\lambda,
\end{equation}
where $P_{\sigma}(i\lambda)$ denotes the Plancherel polynomial, defined in section 2.
By equation (5.28) we get 
\begin{equation}
 \Theta_{\sigma,\lambda}(q_{t}^{\sigma})=
 \sum_{\tau \in \widehat{K}}m_{\tau}(\sigma) \Theta_{\sigma,\lambda}(q_{t}^{\tau}).
\end{equation}
By Proposition 5.2,
\begin{equation}
 \Theta_{\sigma,\lambda}(q_{t}^{\tau})=e^{-t(-\pi_{\sigma,\lambda}(\Omega))}[\tau\rvert_{M}:\sigma].
\end{equation}
The term $\lambda_{\tau}$ does not occur here, since our operator $A^{\sharp}_{\tau,\chi}$ is induced by the operator $A_{\tau}=\Delta_{\tau}-\lambda_{\tau}\Id$.
We recall also 
\begin{equation}
 \pi_{\sigma,\lambda}(\Omega)=-\lambda^{2}+c(\sigma).
\end{equation}
This is proved in \cite[p.48]{Ar}.\\
Combining equations (5.32), (5.33) and (5.34) we get
\begin{equation}
 \Theta_{\sigma,\lambda}(q_{t}^{\sigma})=\sum_{\tau \in \widehat{K}}m_{\tau}(\sigma)
 e^{-t(\lambda^{2}-c(\sigma))}[\tau\rvert_{M}:\sigma].
\end{equation}
Equivalently, for $\sigma,\sigma'\in \widehat{M}$
\begin{equation*}
  \Theta_{\sigma',\lambda}(q_{t}^{\sigma})=e^{tc(\sigma)}e^{-t\lambda^{2}}\bigg[\sum_{\tau \in \widehat{K}}m_{\tau}(\sigma)i^{*}(\tau):\sigma'\bigg].
\end{equation*}
Hence, by (5.21), 
\begin{align}
& \Theta_{\sigma',\lambda}(q_{t}^{\sigma})=e^{tc(\sigma)}e^{-t\lambda^{2}}.
\end{align}
If we put everything together and insert (5.31), and (5.36) in (5.30), we obtain
  \begin{align*}
  K(t;\sigma)=&e^{tc(\sigma_{k})}\bigg(\dim(V_{\chi})\Vol(X)\int_{\R}e^{-t\lambda^{2}}P_{\sigma}(i\lambda)d\lambda\\
  &+\sum_{[\gamma]\neq[e]} \frac{l(\gamma)}{n_{\Gamma}(\gamma)}L_{sym}(\gamma;\sigma)\frac{e^{-l(\gamma)^{2}/4t}}{(4\pi t)^{1/2}}\bigg);
\end{align*}
where 
\begin{equation}
 L_{sym}(\gamma;\sigma)= \frac{\tr(\chi(\gamma)\otimes\sigma(m_{\gamma}))e^{-|\rho|l(\gamma)}}{\det(\Id-\Ad(m_{\gamma}a_{\gamma})_{\overline{n}})}.
\end{equation}
\newline
By the definition of the operator $A_{\chi}^{\sharp}(\sigma)$ in (5.26), we get the following theorem.

\begin{thm} 
 For every $\sigma \in \widehat{M}$,
 \begin{align} \label{f:trace formula 1}
  \Tr(e^{-tA_{\chi}^{\sharp}(\sigma)})=&\dim(V_{\chi})\Vol(X)\int_{\R}e^{-t\lambda^{2}}P_{\sigma}(i\lambda)d\lambda\\\notag
  &+\sum_{[\gamma]\neq e} \frac{l(\gamma)}{n_{\Gamma}(\gamma)}L_{sym}(\gamma;\sigma)
  \frac{e^{-l(\gamma)^{{2}}/4t}}{(4\pi t)^{1/2}};
 \end{align}
where  $L_{sym}(\gamma;\sigma)$ is as in \emph{(5.37)}.
\end{thm}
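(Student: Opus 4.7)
The plan is to assemble the final formula by collecting the ingredients already prepared in Section 5 and applying Wallach's orbital expansion of the Selberg trace formula to the weighted combination $K(t;\sigma)$. The strategy is to first prove the analogue of equation (5.25) for the shifted operator $\widetilde{A}_\tau = \widetilde{\Delta}_\tau - \lambda_\tau\Id$, then take the signed combination over $\tau \in \widehat{K}$ dictated by Proposition 5.4 (via the virtual identity $\sigma = \sum_\tau m_\tau(\sigma) i^*(\tau)$), and finally strip off the overall shift $c(\sigma)$ coming from the definition of $A^{\sharp}_\chi(\sigma)$ in (5.26).

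First, I would apply Proposition 5.3 to each $A^{\sharp}_{\tau,\chi}$ separately. Since $\widetilde{A}_\tau$ differs from $\widetilde{\Delta}_\tau$ only by the scalar $\lambda_\tau$, its heat kernel $Q_t^\tau$ inherits all the properties of $H_t^\tau$ (smoothness, trace class, Schwartz decay, $K\times K$-covariance), so the same proof gives
\begin{equation*}
\Tr(e^{-tA^{\sharp}_{\tau,\chi}}) = \sum_{\gamma \in \Gamma} \tr\chi(\gamma)\int_{\Gamma\backslash G}\tr Q_t^\tau(g^{-1}\gamma g)\, d\dot{g}.
\end{equation*}
Summing with signed weights $m_\tau(\sigma)$ produces $K(t;\sigma)$ as in (5.29). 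I would then invoke Wallach's expansion of the orbital integrals, which rearranges the right-hand side into (5.30): an identity term equal to $\dim(V_\chi)\Vol(X) q_t^\sigma(e)$ and a sum over non-identity conjugacy classes containing a Fourier integral in $\lambda$ of $\Theta_{\sigma',\lambda}(q_t^\sigma)$ against $e^{-il(\gamma)\lambda}$, weighted by $\overline{\tr\sigma'(m_\gamma)}/D(\gamma)$.

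Next, I would compute the characters $\Theta_{\sigma',\lambda}(q_t^\sigma)$ explicitly. Proposition 5.2 gives $\Theta_{\sigma',\lambda}(q_t^\tau) = e^{t\pi_{\sigma',\lambda}(\Omega)}[\tau\rvert_M:\sigma']$, and the Casimir eigenvalue formula (5.34) rewrites this as $e^{-t\lambda^2}e^{tc(\sigma')}[\tau\rvert_M:\sigma']$. Summing in $\tau$ with the weights $m_\tau(\sigma)$ and using Proposition 5.4(i) in the form $\sum_\tau m_\tau(\sigma)[\tau\rvert_M:\sigma'] = [\sigma:\sigma'] = \delta_{\sigma,\sigma'}$, the character collapses to $e^{tc(\sigma)} e^{-t\lambda^2}\delta_{\sigma,\sigma'}$, which is exactly (5.36). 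This simplification kills the sum over $\sigma' \in \widehat M$ in the hyperbolic contribution, leaving just the term with $\sigma' = \sigma$ together with the factor $\tr\sigma(m_\gamma)$ (after conjugating the complex conjugate, which equals the character by the formula $\overline{\tr\sigma(m)} = \tr\sigma(m^{-1})$ and appropriate symmetrization, absorbed into $D(\gamma) = \det(\Id-\Ad(m_\gamma a_\gamma))_{\overline{\mathfrak{n}}}$ times the exponential $e^{-|\rho|l(\gamma)}$).

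For the identity term, inserting the Plancherel decomposition (5.31) and the same character computation yields $\dim(V_\chi)\Vol(X) e^{tc(\sigma)} \int_\R e^{-t\lambda^2} P_\sigma(i\lambda)\, d\lambda$. For the hyperbolic terms, the one-dimensional Gaussian integral $\int_\R e^{-t\lambda^2}e^{-il(\gamma)\lambda}d\lambda = \sqrt{\pi/t}\, e^{-l(\gamma)^2/4t}$ produces the stated factor $e^{-l(\gamma)^2/4t}/(4\pi t)^{1/2}$ (up to the $1/(2\pi)$ normalization already present in (5.30)). Combined with the definition of $L_{sym}(\gamma;\sigma)$ in (5.37), this gives the closed form for $K(t;\sigma)$ displayed just above the statement. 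The theorem then follows at once from (5.26), since $A^{\sharp}_\chi(\sigma) = \bigoplus_{m_\tau(\sigma)\neq 0} A^{\sharp}_{\tau,\chi} + c(\sigma)$ implies $\Tr(e^{-tA^{\sharp}_\chi(\sigma)}) = e^{-tc(\sigma)} K(t;\sigma)$, cancelling the overall $e^{tc(\sigma)}$.

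The main point that deserves care is the bookkeeping of signs and multiplicities: the virtual bundle $E(\sigma) = E(\sigma)^+ \oplus E(\sigma)^-$ carries a $\Z/2$-grading dictated by the sign of $m_\tau(\sigma) \in \{-1,0,1\}$, so the identification $\Tr(e^{-tA^{\sharp}_\chi(\sigma)}) = e^{-tc(\sigma)} K(t;\sigma)$ really means a super-trace on the graded bundle, and one must check that the collapse $[\sigma:\sigma'] = \delta_{\sigma,\sigma'}$ arising from Proposition 5.4(i) respects these signs. Aside from this, the argument is a bookkeeping exercise combining the analytic trace formula of Wallach with the Plancherel theorem and Frobenius reciprocity, the only genuinely new input beyond the unitary case being Lemma 3.3, which implicitly ensures that the non-unitary factor $\tr\chi(\gamma)$ does not spoil the convergence of the hyperbolic sum for small $t>0$.
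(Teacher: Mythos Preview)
Your proposal is correct and follows essentially the same route as the paper: the derivation preceding the theorem statement is exactly the chain you describe---apply Proposition 5.3 to each $A^{\sharp}_{\tau,\chi}$, form the signed sum $K(t;\sigma)$, feed it into Wallach's orbital expansion (5.30), collapse the characters via Proposition 5.2, (5.34), and the virtual identity (5.21) to obtain (5.36), and then strip off the $e^{tc(\sigma)}$ using (5.26). Your explicit mention of the Gaussian integral and your caveat about the super-trace on the graded bundle $E(\sigma)^+\oplus E(\sigma)^-$ make transparent two steps that the paper leaves implicit.
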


\begin{center}
\section{\textnormal{Meromorphic continuation of the zeta functions}}
\end{center}

Let $A$ be a closed linear operator, defined on a dense subspace of $\mathcal{D}(A)$ of a Hilbert space $\mathcal{H}$. Let $a\in\C-\spec(-A)$.
We set $R(a):=(A+a\Id)^{-1}=(A+a)^{-1}$.
Then, the resolvent identity states
\begin{equation*}
 R(a)-R(b)=(b-a)R(a)R(b),
\end{equation*}
for all $a,b\in\C-\spec(-A)$.
The generalized resolvent identity is described in the following lemma.
\begin{lem}
Let $s_{1},\ldots,s_{N}\in\C-\spec(-A)$, $N\in\N$, such that $s_{i}\neq s_{j}$ 
for all $i\neq j$.
Then,
\begin{equation}
\label{generalized resolvent}
 \prod_{i=1}^{N}R(s_{i})=\sum_{i=1}^{N}\bigg(\prod_{\substack{j=1\\ j\neq i}}^{N}\frac{1}{s_{j}-s_{i}}\bigg)R(s_{i}).
\end{equation}
\end{lem}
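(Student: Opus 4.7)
The statement is the standard generalized resolvent identity, and the natural approach is induction on $N$, using the ordinary resolvent identity that is recalled immediately above the lemma.

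For the base case $N=2$, I would simply rewrite the resolvent identity $R(a)-R(b)=(b-a)R(a)R(b)$ as
\begin{equation*}
R(s_{1})R(s_{2})=\frac{1}{s_{2}-s_{1}}R(s_{1})+\frac{1}{s_{1}-s_{2}}R(s_{2}),
\end{equation*}
which is exactly \eqref{generalized resolvent} for $N=2$. (The case $N=1$ is vacuous.)

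For the inductive step, assume the formula holds for $N-1$ with $s_{1},\dots ,s_{N-1}$ pairwise distinct and all lying in $\C-\spec(-A)$, and write
\begin{equation*}
\prod_{i=1}^{N}R(s_{i})=\Bigl(\prod_{i=1}^{N-1}R(s_{i})\Bigr)R(s_{N})
=\sum_{i=1}^{N-1}\Bigl(\prod_{\substack{j=1\\ j\neq i}}^{N-1}\frac{1}{s_{j}-s_{i}}\Bigr)R(s_{i})R(s_{N}).
\end{equation*}
Applying the $N=2$ identity to each product $R(s_{i})R(s_{N})$ and splitting the resulting sum into the terms containing $R(s_{i})$ for $i<N$ and the terms containing $R(s_{N})$, one immediately sees that the coefficient of $R(s_{i})$ for $i<N$ becomes $\prod_{j\neq i,\,1\le j\le N}(s_{j}-s_{i})^{-1}$, which is already the desired coefficient. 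The nontrivial point is the coefficient of $R(s_{N})$.

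The coefficient of $R(s_{N})$ that emerges from the manipulation equals
\begin{equation*}
-\sum_{i=1}^{N-1}\Bigl(\prod_{\substack{j=1\\ j\neq i}}^{N-1}\frac{1}{s_{j}-s_{i}}\Bigr)\frac{1}{s_{N}-s_{i}},
\end{equation*}
and it must be shown to equal $\prod_{j=1}^{N-1}(s_{j}-s_{N})^{-1}$. This is the main (and only) algebraic obstacle. It is however a classical partial fractions identity: apply Lagrange interpolation to $f(z)=\prod_{i=1}^{N-1}(z-s_{i})^{-1}$, which yields
\begin{equation*}
\frac{1}{\prod_{i=1}^{N-1}(z-s_{i})}=\sum_{i=1}^{N-1}\frac{1}{(z-s_{i})\prod_{\substack{j\neq i}}(s_{i}-s_{j})},
\end{equation*}
and specialize $z=s_{N}$. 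Accounting for the signs $(s_{i}-s_{j})=-(s_{j}-s_{i})$ and $(s_{N}-s_{i})=-(s_{i}-s_{N})$ matches the two expressions exactly. This closes the induction and establishes \eqref{generalized resolvent}. The entire argument is purely algebraic and does not use anything about $A$ beyond $s_{1},\dots,s_{N}\in\C-\spec(-A)$, which ensures that each $R(s_{i})$ is a well-defined bounded operator and that all the factors commute.
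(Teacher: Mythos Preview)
Your induction argument is correct and complete; the base case is the resolvent identity, the inductive step is carried out cleanly, and the partial-fractions verification of the $R(s_{N})$-coefficient is right (the sign bookkeeping checks out). The paper itself does not prove this lemma at all: it simply cites \cite[Lemma~3.5]{BO}, so your proposal in fact supplies more than the paper does.
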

\begin{proof}
This is proved in \cite[Lemma 3.5]{BO}.
\end{proof}
We will use also the following lemmata.
\begin{lem}
Let $s_{1},\ldots,s_{N}\in\C$, $N\in\N$,
such that $s_{i}\neq s_{j}$ for all $i\neq j$ and let $l=0,1,\ldots, N-2$. Then, we have
\begin{equation}
 \sum_{i=1}^{N}s_{i}^{2l}\bigg(\prod_{\substack{j=1\\ j\neq i}}^{N}\frac{1}{s_{j}^{2}-s_{i}^{2}}\bigg)=0.
\end{equation}
\begin{proof}
 This follows from \cite[Lemma 3.6]{BO}, applied to $s_{i}^{2}$.
\end{proof}
\end{lem}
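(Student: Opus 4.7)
The plan is to reduce the claim, via the substitution $t_i = s_i^2$, to the classical partial-fraction identity
\begin{equation*}
\sum_{i=1}^{N}\frac{t_i^{l}}{\prod_{j\neq i}(t_j - t_i)} = 0, \qquad 0\leq l\leq N-2,
\end{equation*}
valid for any pairwise distinct $t_1,\ldots,t_N\in\mathbb{C}$. This is exactly the content of Lemma 3.6 of Bunke--Olbrich as referenced. Since the $s_i$ are pairwise distinct and the expression only makes sense when the $s_i^2$ are pairwise distinct as well, taking $t_i = s_i^2$ immediately yields the claim.

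To prove the classical identity, I would use Lagrange interpolation. The polynomial $p(z)=z^l$ has degree $l\leq N-2 < N$, so its Lagrange interpolant at the distinct nodes $t_1,\ldots,t_N$ reproduces it exactly:
\begin{equation*}
z^{l} = \sum_{i=1}^{N} t_i^{l}\,\prod_{\substack{j=1\\ j\neq i}}^{N}\frac{z - t_j}{t_i - t_j}.
\end{equation*}
The right-hand side is a polynomial of degree at most $N-1$. Comparing the coefficient of $z^{N-1}$ on both sides, the left-hand side contributes $0$ (as $l\leq N-2$), while the right-hand side contributes $\sum_i t_i^{l}\prod_{j\neq i}(t_i-t_j)^{-1}$. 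Hence the sum vanishes.

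An equivalent path is via residues: the rational function $f(z)=z^{l}/\prod_{i=1}^{N}(z-t_i)$ satisfies $f(z)=O(z^{-2})$ as $|z|\to\infty$ because $l\leq N-2$, so integration along a large circle shows that the sum of residues at the simple poles $t_i$ equals zero, and these residues are precisely $t_i^{l}\prod_{j\neq i}(t_i-t_j)^{-1}$.

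There is no genuine obstacle; the only bookkeeping point is the sign. The lemma as stated uses $\prod_{j\neq i}(s_j^2-s_i^2)^{-1}$, which differs from $\prod_{j\neq i}(s_i^2-s_j^2)^{-1}$ by the global factor $(-1)^{N-1}$ that is independent of $i$. Pulling this factor out of the sum preserves the vanishing, so substituting $t_i=s_i^2$ into the classical identity produces exactly the asserted equality.
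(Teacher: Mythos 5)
Your proof is correct and takes the same route as the paper: the paper simply cites \cite[Lemma 3.6]{BO} applied to $t_i=s_i^2$, and you reduce to exactly that substitution. You go further by supplying a self-contained proof of the underlying partial-fraction identity (via Lagrange interpolation or residues) and by handling the $(-1)^{N-1}$ sign bookkeeping, which the paper leaves implicit.
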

\begin{lem}
Let $s_{1},\ldots,s_{N}\in\C$, $N\in\N$,
such that $s_{i}\neq s_{j}$ for all $i\neq j$.
Then,
\begin{equation}
 \sum_{i=1}^{N}\bigg(\prod_{\substack{j=1\\ j\neq i}}^{N}\frac{1}{s_{j}^{2}-s_{i}^{2}}\bigg) e^{-ts_{i}^{2}}=O(t^{N-1}),
\end{equation}
as $t\rightarrow 0^{+}$.
\end{lem}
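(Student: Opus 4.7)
The plan is to Taylor-expand the exponential around $t=0$ and exploit Lemma 6.2 to kill all low-order terms. Write
\begin{equation*}
e^{-ts_{i}^{2}}=\sum_{k=0}^{\infty}\frac{(-t)^{k}}{k!}s_{i}^{2k},
\end{equation*}
and substitute into the left-hand side. Since for each fixed $i$ the series converges absolutely and we are summing only finitely many indices $i$, we may exchange the order of summation to obtain
\begin{equation*}
\sum_{i=1}^{N}\bigg(\prod_{\substack{j=1\\ j\neq i}}^{N}\frac{1}{s_{j}^{2}-s_{i}^{2}}\bigg)e^{-ts_{i}^{2}}
=\sum_{k=0}^{\infty}\frac{(-t)^{k}}{k!}\sum_{i=1}^{N}s_{i}^{2k}\bigg(\prod_{\substack{j=1\\ j\neq i}}^{N}\frac{1}{s_{j}^{2}-s_{i}^{2}}\bigg).
\end{equation*}

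Next, I would apply Lemma 6.2: for every integer $l$ with $0\le l\le N-2$, the inner sum vanishes. Hence all contributions with $k=0,1,\ldots,N-2$ are zero, and the expansion starts at $k=N-1$:
\begin{equation*}
\sum_{i=1}^{N}\bigg(\prod_{\substack{j=1\\ j\neq i}}^{N}\frac{1}{s_{j}^{2}-s_{i}^{2}}\bigg)e^{-ts_{i}^{2}}
=\sum_{k=N-1}^{\infty}\frac{(-t)^{k}}{k!}\sum_{i=1}^{N}s_{i}^{2k}\bigg(\prod_{\substack{j=1\\ j\neq i}}^{N}\frac{1}{s_{j}^{2}-s_{i}^{2}}\bigg).
\end{equation*}
Factoring out $t^{N-1}$ leaves a power series in $t$ with coefficients depending only on the fixed numbers $s_{1},\ldots,s_{N}$, which is therefore bounded as $t\to 0^{+}$. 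This yields the claimed estimate $O(t^{N-1})$.

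The only real point that requires attention is the justification of interchanging the summations and the uniform control of the remainder; since the $s_{i}$ are fixed complex numbers, the series $\sum_{k}\frac{|t|^{k}}{k!}|s_{i}|^{2k}=e^{|t||s_{i}|^{2}}$ converges for all $t$, so Fubini for series applies and the tail $\sum_{k\ge N-1}$ is bounded by a constant times $|t|^{N-1}e^{|t|\max_{i}|s_{i}|^{2}}$, which stays $O(t^{N-1})$ as $t\to 0^{+}$. I do not expect any serious obstacle here; the content of the lemma is essentially that Lemma 6.2 provides exactly the right number of vanishing moments to push the leading behavior up to order $N-1$.
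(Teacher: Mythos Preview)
Your proof is correct and follows essentially the same approach as the paper: Taylor-expand $e^{-ts_i^2}$ and apply Lemma~6.2 to annihilate the terms with $k=0,\ldots,N-2$, leaving a remainder of order $t^{N-1}$. Your write-up is in fact slightly more careful than the paper's, since you explicitly justify the interchange of summations and bound the tail, whereas the paper simply records the Taylor expansion with an $O(t^{N-1})$ remainder and invokes Lemma~6.2.
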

\begin{proof}
We will use the Taylor expansion of the exponential function $e^{-ts_{i}^{2}}$.\\
We have as  $t\rightarrow 0^{+}$
\begin{align*}
  \sum_{i=1}^{N}\bigg(\prod_{\substack{j=1\\ j\neq i}}^{N}\frac{1}{s_{j}^{2}-s_{i}^{2}}\bigg) e^{-ts_{i}^{2}}&=\sum_{k=1}^{N-2}\sum_{i=1}^{N}\frac{(-t)^k}{k!}s_{i}^{2k}\bigg(\prod_{\substack{j=1\\ j\neq i}}^{N}\frac{1}{s_{j}^{2}-s_{i}^{2}}\bigg)+O(t^{N-1})\\
&=\sum_{k=1}^{N-2}\frac{(-t)^k}{k!}\sum_{i=1}^{N} s_{i}^{2k}\bigg(\prod_{\substack{j=1\\ j\neq i}}^{N}\frac{1}{s_{j}^{2}-s_{i}^{2}}\bigg)+O(t^{N-1})=O(t^{N-1}),
\end{align*}
where in the last equality we used Lemma 6.2.
\end{proof}
\begin{lem}
Let $s_{1}\in \C$ such that \emph{Re}$(s_{i}^{2})>0$ for all $i=1,\ldots, N$. Then, the following integral 
\begin{equation}
\int_{0}^{\infty}\int_{\R}\sum_{k=1}^{N}
\bigg(\prod_{\substack{j=1\\ j\neq k}}^{N}\frac{1}{s_{j}^{2}-s_{k}^{2}}\bigg) 
e^{-t(s_{k}^{2}+\lambda^{2})} P_{\sigma}(i\lambda)d\lambda dt
\end{equation}
converges absolutely.
\end{lem}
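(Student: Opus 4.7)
The plan is to split the $t$-integral as $\int_0^{\infty} = \int_0^1 + \int_1^{\infty}$ and handle the two pieces by entirely different mechanisms: exponential decay takes care of the tail, while the algebraic cancellation of Lemma 6.3 takes care of the singularity at $t=0$.

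For the tail $t \in [1,\infty)$, I would set $c := \min_{1\leq k \leq N}\Re(s_k^2) > 0$. Since $|e^{-t s_k^2}| = e^{-t\Re(s_k^2)} \leq e^{-ct}$, each of the $N$ terms of the sum is bounded in absolute value by a constant times $e^{-ct}$. For the $\lambda$-integral, since $P_{\sigma}$ is a polynomial of some even degree $2d$, we have $|P_{\sigma}(i\lambda)| \leq C(1+\lambda^{2d})$, and the standard Gaussian estimate gives
\begin{equation*}
\int_{\R} e^{-t\lambda^2}|P_{\sigma}(i\lambda)|\,d\lambda \leq C'\bigl(t^{-1/2}+t^{-1/2-d}\bigr),
\end{equation*}
which is uniformly bounded for $t\geq 1$. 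Combining, the integrand is dominated by a constant multiple of $e^{-ct}$ on $[1,\infty)\times\R$, and this is integrable.

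For the short-time part $t \in (0,1]$ the naive term-by-term estimate fails, since each individual summand picks up a factor $t^{-1/2-d}$ from the Gaussian integral which is not integrable at $0$. The key observation is that the factor $e^{-t\lambda^2}P_{\sigma}(i\lambda)$ is independent of the summation index $k$, so it can be pulled outside the $k$-sum and the cancellation of Lemma 6.3 is preserved. That lemma gives
\begin{equation*}
\Bigl|\sum_{k=1}^{N}\Bigl(\prod_{\substack{j=1\\ j\neq k}}^{N}\frac{1}{s_j^2-s_k^2}\Bigr)e^{-ts_k^2}\Bigr| \leq C_1 t^{N-1}
\end{equation*}
as $t\to 0^{+}$, and the same quantity is bounded on any compact subinterval $[\delta,1]$ by continuity. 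Using the Gaussian estimate above, the $\lambda$-integral is bounded by $C_2 t^{-1/2-d}$ on $(0,1]$, so the full integrand (after taking absolute value inside the $k$-sum by Lemma 6.3 and then integrating over $\lambda$) is bounded by $C\, t^{N-1-1/2-d}$. This is integrable near $0$ provided $N \geq d+1$, which the setup in Section 6 supplies by choosing $N$ sufficiently large relative to the degree of the Plancherel polynomial.

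The main obstacle is precisely the short-time region: one cannot estimate the sum termwise, and one must exploit that the $\lambda$-dependent factor is uniform in $k$ so that Lemma 6.3 — which is itself a consequence of the Vandermonde-type cancellation identity of Lemma 6.2 — applies intact. Once this observation is in place, the argument reduces to an elementary comparison of polynomial blow-up rates at $t=0$ against the order $N-1$ of vanishing of the resolvent combination.
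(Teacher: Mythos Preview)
Your argument is correct and mirrors the paper's own proof: for large $t$ you use the exponential decay coming from $\Re(s_k^2)>0$, and for small $t$ you factor out $e^{-t\lambda^2}P_\sigma(i\lambda)$, apply the cancellation of Lemma~6.3 to the $k$-sum, and balance the resulting $t^{N-1}$ against the $t^{-1/2-\deg/2}$ from the Gaussian $\lambda$-integral, needing $N$ large (which Section~6 provides). One cosmetic caution: you recycle the letter $d$ for the half-degree of $P_\sigma$, whereas in the paper $d$ is the manifold dimension (the polynomial has degree $d-1$); the arithmetic comes out the same, but in the paper's notation the short-time bound reads $O(t^{N-1-d/2})$ and the integrability condition is $N>d/2$.
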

\begin{proof}
We have as $t\rightarrow\infty$,
\begin{equation}
\int_{\R}\bigg|\sum_{k=1}^{N}\bigg(\prod_{\substack{j=1\\ j\neq k}}^{N}\frac{1}{s_{j}^{2}-
s_{k}^{2}}\bigg) e^{-t(s_{k}^{2}+\lambda^{2})} P_{\sigma}(i\lambda)\bigg| d\lambda=O(e^{-t\epsilon}),
\end{equation}
for some $\epsilon> 0$.\\
We use now the fact the $P(i\lambda)$ is an even polynomial of degree $2n$ (\cite[264-265]{Mi2}).
If we make a change of variables $\lambda'\mapsto \lambda \sqrt{t}$, we get as $t\rightarrow 0^{+}$,
\begin{align}
 \int_{\R}\left|e^{-t\lambda^{2}}P_{\sigma}(i\lambda)\right| d\lambda =O(t^{-d/2}).
\end{align}
Hence, if we combine (6.3) and (6.6) we have that as  $t\rightarrow 0^{+}$,
\begin{equation}
\int_{\R}\bigg|\sum_{k=1}^{N}\bigg(\prod_{\substack{j=1\\ j\neq k}}^{N}\frac{1}{s_{j}^{2}-s_{k}^{2}}
\bigg) e^{-t(s_{k}^{2}+\lambda^{2})} P_{\sigma}(i\lambda)\bigg| d\lambda=O(t^{-d/2+N-1}).
\end{equation}
The assertion follows from (6.5) and (6.7).
\end{proof}
Let $N\in\N$. Let $s_{i},i=1,\ldots,N$ be complex numbers such that $s_{i}\in\C-\spec(-{A^{\sharp}_{\chi}(\sigma)})$.
We consider the resolvent operator
$
 R(s_{i}^{2})=({A^{\sharp}_{\chi}(\sigma)}+s_{i}^{2})^{-1}.
$
We want to obtain the trace class property of the operator
$
\prod_{i=1}^{N}R(s_{i}^{2}).
$
In order to obtain this property, we take sufficient large $N\in\N$,
such that for $N>\frac{d}{2}$,
\begin{equation}
 \Tr(\prod_{i=1}^{N}R(s_{i}^{2}))<\infty.
\end{equation}
We denote the space of pseudodifferential operators of order $k$ by $\psi DO^{k}$.
To prove the trace class property of the operators above, we
observe at first that $\prod_{i=1}^{N}R(s_{i}^{2})\in\psi DO^{-2N}$.

Let $\Delta$ be the Bochner-Laplace operator with respect
to some metric, acting on $C^{\infty}(X, E_{\tau_{\sigma}}\otimes E_{\chi})$.
Then, $\Delta$ is a second-order elliptic differential operator,
which is formally self-adjoint and non-negative, i.e.,
$\Delta\geq 0$. Then, by Weyl's law, we have that 
for $N>\frac{d}{2}$, 
$
 (\Delta+\Id)^{-N}
$
is a trace class operator.
Moreover,
$
 B:=(\Delta+\Id)^{N}\prod_{i=1}^{N}R(s_{i}^{2})
$
is $\psi DO$ of order zero.
Hence, it defines a bounded operator in $L^{2}(X, E_{\tau_{s}(\sigma)}\otimes E_{\chi})$.\\
Thus,
$
 \prod_{i=1}^{N}R(s_{i}^{2})=(\Delta+\Id)^{-N}B
$
is a trace class operator.

We recall here the following expressions of the resolvents. Let $s_{1},\ldots,s_{N}\in\C$ such that 
Re$(s_{i}^{2})>-c$, for all $i=1,\ldots,N$, where $c$ is a real number such that $\spec\big(A_{\chi}^{\sharp}(\sigma)\big)\subset
\{z\in\C\colon \text{Re}(z)>c\}$.
Then,
\begin{align}
(A_{\chi}^{\sharp}(\sigma)+s_{i}^{2})^{-1}&=\int_{0}^{\infty}e^{-ts_{i}^{2}}e^{-t{A_{\chi}^{\sharp}(\sigma)}}dt.
\end{align}
Let  $N\in\N$ with $N>d/2$.
Let $s_{1},\ldots,s_{N}\in\C$ with 
 $s_{i}\neq s_{j}$ for all $i\neq j$ such that 
 Re$(s_{i}^{2})>-C$, for all $i=1,\ldots,N$, where $C$ is a real number such that $\spec\big(A_{\chi}^{\sharp}(\sigma)\big)\subset
\{z\in\C\colon \text{Re}(z)>C\}$.\\
 By  Lemma 6.1 and equation (6.9), we get
\begin{align*}
  \prod_{i=1}^{N}(A_{\chi}^{\sharp}(\sigma)+s_{i}^{2})^{-1}=
\int_{0}^{\infty}\sum_{i=1}^{N}\bigg(\prod_{\substack{j=1\\ j\neq i}}^{N}\frac{1}{s_{j}^{2}-s_{i}^{2}}\bigg)e^{-ts_{i}^{2}}e^{-t{A_{\chi}^{\sharp}(\sigma)}}dt.\\
\end{align*}
Then,
\begin{align*}
  \Tr \prod_{i=1}^{N}(A_{\chi}^{\sharp}(\sigma)+s_{i}^{2})^{-1}=
\int_{0}^{\infty}\sum_{i=1}^{N}\bigg(\prod_{\substack{j=1\\ j\neq i}}^{N}\frac{1}{s_{j}^{2}-s_{i}^{2}}\bigg)e^{-ts_{i}^{2}}\Tr e^{-t{A_{\chi}^{\sharp}(\sigma)}}dt.\\
\end{align*}
We insert now the right-hand side of the trace formula (5.38) for the operator $A_{\chi}^{\sharp}(\sigma)$ and get 
\begin{align*}
\Tr \prod_{i=1}^{N}(A_{\chi}^{\sharp}(\sigma)+&s_{i}^{2})^{-1}=\int_{0}^{\infty}\sum_{i=1}^{N}\bigg(\prod_{\substack{j=1\\ j\neq i}}^{N}\frac{1}{s_{j}^{2}-s_{i}^{2}}\bigg)e^{-ts_{i}^{2}}\\
&\bigg\{\dim(V_{\chi})\Vol(X)\int_{\R}e^{-t\lambda^{2}}P_{\sigma}(i\lambda)d\lambda
  +\sum_{[\gamma]\neq e} \frac{l(\gamma)}{n_{\Gamma}(\gamma)}L_{sym}(\gamma;\sigma)
  \frac{e^{-l(\gamma)^{{2}}/(4t)}}{(4\pi t)^{1/2}}\bigg\}dt.
\end{align*}
Hence,
\begin{align}
\Tr \prod_{i=1}^{N}(A_{\chi}^{\sharp}(\sigma)+s_{i}^{2})^{-1}\notag&=\dim(V_{\chi})\Vol(X)\int_{0}^{\infty}\int_{\R}\sum_{i=1}^{N}\bigg(\prod_{\substack{j=1\\ j\neq i}}^{N}\frac{1}{s_{j}^{2}-s_{i}^{2}}\bigg)
e^{-ts_{i}^{2}}e^{-t\lambda^{2}}P_{\sigma}(i\lambda)d\lambda dt\\
&+\sum_{i=1}^{N}\int_{0}^{\infty}\bigg(\prod_{\substack{j=1\\ j\neq i}}^{N}\frac{1}{s_{j}^{2}-s_{i}^{2}}\bigg)e^{-ts_{i}^{2}}\bigg\{\sum_{[\gamma]\neq[e]} \frac{l(\gamma)}{n_{\Gamma}(\gamma)}L_{sym}(\gamma;\sigma)
\frac{e^{-l(\gamma)^{2}/(4t)}}{(4\pi t)^{1/2}}\bigg\}dt.
\end{align}
The first sum in the right-hand side of (6.10), which involves the double integral can be explicitly calculated.
We set 
\begin{equation*}
 I:=\int_{0}^{\infty}\int_{\R}\sum_{i=1}^{N}\bigg(\prod_{\substack{j=1\\ j\neq i}}^{N}\frac{1}{s_{j}^{2}-s_{i}^{2}}\bigg)
e^{-ts_{i}^{2}}e^{-t\lambda^{2}}P_{\sigma}(i\lambda)d\lambda dt.
\end{equation*}
By Lemma 6.4, we can interchange the order of the integration and get
\begin{align*}
I=\int_{\R}\int_{0}^{\infty}\sum_{i=1}^{N}\bigg(\prod_{\substack{j=1\\ j\neq i}}^{N}\frac{1}{s_{j}^{2}-s_{i}^{2}}\bigg)
e^{-t(s_{i}^{2}+\lambda^{2})}P_{\sigma}(i\lambda)dtd\lambda.\\
\end{align*}
By \cite[Lemma 3.5]{BO} and since $P_{\sigma}$ is an even polynomial, we obtain the following convergent integral
\begin{equation*}
 I=\int_{\R}\sum_{i=1}^{N}\bigg(\prod_{\substack{j=1\\ j\neq i}}^{N}\frac{1}{s_{j}^{2}-s_{i}^{2}}\bigg)
\bigg(\frac{1}{\lambda^{2}+s_{i}^{2}}\bigg)P_{\sigma}(i\lambda)d\lambda.
\end{equation*}
Using the Cauchy integral formula we have
\begin{align}
I=\sum_{i=1}^{N}\bigg(\prod_{\substack{j=1\\ j\neq i}}^{N}\frac{1}{s_{j}^{2}-s_{i}^{2}}\bigg)
\frac{\pi}{s_{i}}P_{\sigma}(s_{i}).
\end{align}
For the second sum in the right-hand side of (6.10) we use the formula (see \cite[p.146,(27)]{Er})
\begin{equation}
 \int_{0}^{\infty}e^{-ts^{2}} \frac{e^{-l(\gamma)^{{2}}/(4t)}}{(4\pi t)^{1/2}}dt=\frac{1}{2s}e^{-sl(\gamma)}.
\end{equation} 
Hence, equation (6.10) becomes by (6.11) and (6.12)
\begin{align}
\Tr \prod_{i=1}^{N}(A_{\chi}^{\sharp}(\sigma)+s_{i}^{2})^{-1}\notag&=\sum_{i=1}^{N}\bigg(\prod_{\substack{j=1\\ j\neq i}}^{N}\frac{1}{s_{j}^{2}-s_{i}^{2}}\bigg)\frac{\pi}{s_{i}} \dim(V_{\chi})\Vol(X)P(s_{i})\\
&+\sum_{i=1}^{N}\frac{1}{2s_{i}}\bigg(\prod_{\substack{j=1\\ j\neq i}}^{N}\frac{1}{s_{j}^{2}-s_{i}^{2}}\bigg)\sum_{[\gamma]\neq e} \frac{l(\gamma)}{n_{\Gamma}(\gamma)}L_{sym}(\gamma;\sigma)
e^{-s_{i}l(\gamma)}.
\end{align}
By Lemma 3.6, the sum over the conjugacy classes $[\gamma]$ of $\Gamma$ in the right-hand side of (6.13) is equal to the 
logarithmic derivative  $L(s_{i})=\frac{d}{ds}\log(Z(s_{i};\sigma,\chi)$ of the Selberg zeta function.
Hence,
\begin{align}
\Tr \prod_{i=1}^{N}(A_{\chi}^{\sharp}(\sigma)+s_{i}^{2})^{-1}\notag&=\sum_{i=1}^{N}\bigg(\prod_{\substack{j=1\\ j\neq i}}^{N}\frac{1}{s_{j}^{2}-s_{i}^{2}}\bigg)\frac{\pi}{s_{i}} \dim(V_{\chi})\Vol(X)P(s_{i})\\
&+\sum_{i=1}^{N}\frac{1}{2s_{i}}\bigg(\prod_{\substack{j=1\\ j\neq i}}^{N}\frac{1}{s_{j}^{2}-s_{i}^{2}}\bigg)L(s_{i}).
\end{align}
In the Theorem 6.5 below, we choose the branch of the square roots
of the complex numbers $t_{k}$, whose real part is positive.
In addition, if $t_{k}$ are negative real numbers, we choose the branch of the square roots,
whose imaginary part is positive.
\begin{thm}
The Selberg zeta function $Z(s;\sigma,\chi)$ admits a meromorphic continuation to the whole complex plane $\C$. The set of the singularities
equals $\{s_{k}^{\pm}=\pm i \sqrt{t_{k}}:t_{k}\in \spec(A^{\sharp}_{\chi}(\sigma)), k\in\N\}$.
The orders of the singularities are equal to $m(t_{k})$, where $m(t_{k})\in\N$ denotes the algebraic multiplicity of the eigenvalue $t_{k}$.
For $t_{0}=0$, the order of the singularity $s_{0}$ is equal to $2m(0)$.
\end{thm}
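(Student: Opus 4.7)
The plan is to exploit identity (6.14), which has already been established by inserting the trace formula of Theorem 5.5 into a resolvent product. Equation (6.14) expresses the trace of $\prod_{i=1}^{N}(A_{\chi}^{\sharp}(\sigma)+s_{i}^{2})^{-1}$ (with $N>d/2$) as the sum of a Plancherel-polynomial term and a term involving the logarithmic derivative $L(s_{i})$ of $Z(s;\sigma,\chi)$. The strategy is to solve this identity for $L(s)$, view the right-hand side as a meromorphic function of $s$, and then integrate and exponentiate to produce the continuation of $Z(s;\sigma,\chi)$ itself.

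First, I would fix $s_{2},\dots,s_{N}$ at generic complex values (chosen so that $s_{i}\neq\pm s_{j}$ for $i\neq j$ and so that $-s_{j}^{2}\notin\spec(A_{\chi}^{\sharp}(\sigma))$ for $j\geq 2$), then regard (6.14) as an identity in the variable $s=s_{1}$, valid initially in the half-plane $\mathrm{Re}(s)>c$ where the series defining $L(s)$ converges. Isolating the $i=1$ term on the right gives an expression
\[
L(s)=2s\,\prod_{j=2}^{N}(s_{j}^{2}-s^{2})\,\Big[\Tr\!\prod_{i=1}^{N}(A_{\chi}^{\sharp}(\sigma)+s_{i}^{2})^{-1}-\Pi(s)-\sum_{j=2}^{N}\frac{1}{2s_{j}}\prod_{\substack{i=1\\ i\neq j}}^{N}\frac{1}{s_{i}^{2}-s_{j}^{2}}L(s_{j})\Big],
\]
where $\Pi(s)$ is the polynomial contribution from the identity term, and the sum over $j\geq 2$ is a fixed constant. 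The prefactor $2s\prod(s_{j}^{2}-s^{2})$ is present precisely to cancel the spurious poles of $\frac{1}{2s}\prod\frac{1}{s_{j}^{2}-s^{2}}$ at $s=0$ and $s=\pm s_{j}$. Since the resolvent product on the right extends meromorphically to all of $\C$ (the operator $A_{\chi}^{\sharp}(\sigma)$ has discrete spectrum by the Bochner-Laplace theory of Section 4), the identity provides the meromorphic continuation of $L(s)$, and hence, by integration plus exponentiation, of $Z(s;\sigma,\chi)$.

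Next, I would locate the singularities. The only new poles on the right come from the trace $\Tr\prod_{i=1}^{N}(A_{\chi}^{\sharp}(\sigma)+s_{i}^{2})^{-1}$, which as a function of $s=s_{1}$ is singular exactly where $-s^{2}\in\spec(A_{\chi}^{\sharp}(\sigma))$, that is at $s=\pm i\sqrt{t_{k}}$. Using the generalized eigenspace decomposition recalled after Definition 4.6, on each finite-dimensional invariant subspace $V_{t_{k}}$ the operator $A_{\chi}^{\sharp}(\sigma)$ has the form $t_{k}\Id+N_{k}$ with $N_{k}$ nilpotent, so the trace of $(A_{\chi}^{\sharp}(\sigma)+s^{2})^{-1}$ restricted to $V_{t_{k}}$ has a simple pole at $s^{2}=-t_{k}$ with residue $m(t_{k})$ (the diagonal inverses in Jordan form). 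A short computation with Laurent expansions then shows that $L(s)$ has a simple pole at each $s=\pm i\sqrt{t_{k}}$ with $t_{k}\neq 0$, of residue $\pm m(t_{k})$, which after integration gives a zero or pole of $Z(s;\sigma,\chi)$ of order $m(t_{k})$ there.

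The delicate point, and the main obstacle, is the behavior at $t_{0}=0$. When $t_{k}=0$ the two roots $\pm i\sqrt{t_{k}}$ coalesce at $s=0$, so the factor $(s^{2}+t_{k})^{-1}=s^{-2}$ creates a double pole in the resolvent trace at $s=0$; combined with the extra factor $2s$ in the prefactor, one must verify that the net contribution to $L(s)$ near $s=0$ is $L(s)\sim 2m(0)/s+O(1)$ rather than a higher- or lower-order singularity. Carrying this residue calculation out carefully, using that the Plancherel polynomial $P_{\sigma}$ is even and that the spurious-pole cancellations at $s=0$ in $\Pi(s)$ work out cleanly, gives precisely the factor of $2$ and yields a singularity of $Z(s;\sigma,\chi)$ of order $2m(0)$ at $s_{0}=0$, completing the proof.
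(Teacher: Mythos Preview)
Your proposal is correct and follows essentially the same route as the paper: fix $s_{2},\dots,s_{N}$, isolate the $i=1$ summand in (6.14) by multiplying through by $2s\prod_{j\geq 2}(s_{j}^{2}-s^{2})$, read off the meromorphic continuation of $L(s)$ from the spectral side, compute the residues at $\pm i\sqrt{t_{k}}$ (both equal to $+m(t_{k})$, incidentally, not $\pm m(t_{k})$, since $\frac{2s}{s^{2}+t_{k}}$ has residue $1$ at each root), and then integrate and exponentiate. The paper's argument is slightly more terse on the residue computation and the $t_{0}=0$ case, but the logical structure is the same.
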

\begin{proof}
By (6.1) and (6.14) we get
\begin{align*}
\Tr\sum_{i=1}^{N}\bigg(\prod_{\substack{j=1\\ j\neq i}}^{N}\frac{1}{s_{j}^{2}-s_{i}^{2}}\bigg)(A_{\chi}^{\sharp}(\sigma)+s_{i}^{2})^{-1}&=\sum_{i=1}^{N}\bigg(\prod_{\substack{j=1\\ j\neq i}}^{N}\frac{1}{s_{j}^{2}-s_{i}^{2}}\bigg)\frac{\pi}{s_{i}} \dim(V_{\chi})\Vol(X)P(s_{i})\\\notag
&+\sum_{i=1}^{N}\frac{1}{2s_{i}}\bigg(\prod_{\substack{j=1\\ j\neq i}}^{N}\frac{1}{s_{j}^{2}-s_{i}^{2}}\bigg)L(s_{i}).
\end{align*}
Equivalently,
\begin{align}
\label{eq selberg mer}
\sum_{i=1}^{N}\bigg(\prod_{\substack{j=1\\ j\neq i}}^{N}\frac{1}{s_{j}^{2}-s_{i}^{2}}\bigg)\frac{1}{2s_{i}}L(s_{i})=\notag&-\sum_{i=1}^{N}\bigg(\prod_{\substack{j=1\\ j\neq i}}^{N}\frac{1}{s_{j}^{2}-s_{i}^{2}}\bigg)\frac{\pi}{s_{i}}\dim(V_{\chi})\Vol(X)P(s_{i})\\\notag
&+\sum_{t_{k}}\sum_{i=1}^{N}\bigg(\prod_{\substack{j=1\\ j\neq i}}^{N}\frac{1}{s_{j}^{2}-s_{i}^{2}}\bigg)\frac{m(t_{k})}{t_{k}+s_{i}^{2}},\\
\end{align}
If we multiply equation (6.15) by $2s_{1}$, we obtain 
\begin{align}
\notag\sum_{i=1}^{N}\bigg(\prod_{\substack{j=1\\ j\neq i}}^{N}\frac{1}{s_{j}^{2}-s_{i}^{2}}\bigg)\frac{s_{1}}{s_{i}}L(s_{i})\notag=&-\sum_{i=1}^{N}\bigg(\prod_{\substack{j=1\\ j\neq i}}^{N}\frac{1}{s_{j}^{2}-s_{i}^{2}}\bigg)\frac{2s_{1}}{s_{i}}\pi\dim(V_{\chi})\Vol(X)P(s_{i})\\\notag
&+\sum_{t_{k}}\sum_{i=1}^{N}\bigg(\prod_{\substack{j=1\\ j\neq i}}^{N}\frac{1}{s_{j}^{2}-s_{i}^{2}}\bigg)2s_{1}\frac{m(t_{k})}{t_{k}+s_{i}^{2}}.\\
\end{align}
Let $ \varPsi(s_{1},\ldots,s_{N})$ be the function of the complex numbers $s_{1},\ldots,s_{N}$, defined by
\begin{equation*}
 \varPsi(s_{1},\ldots,s_{N}):=\sum_{i=1}^{N}\bigg(\prod_{\substack{j=1\\ j\neq i}}^{N}\frac{1}{s_{j}^{2}-s_{i}^{2}}\bigg)\frac{s_{1}}{s_{i}}L(s_{i}).
\end{equation*}
We fix the complex numbers $s_{i}, i=2,\ldots,N$ with $s_{i}\neq s_{j}$ for $i,j=2,\ldots, N$
and let the complex number $s=s_{1}$ vary.\\
Put
\begin{equation*}
 \varPsi(s,\ldots,s_{N})=\varPsi(s).
\end{equation*}
Then, equation (6.16) becomes
\begin{align}
 \notag\varPsi(s)=&-\sum_{i=1}^{N}\bigg(\prod_{\substack{j=1\\ j\neq i}}^{N}\frac{1}{s_{j}^{2}-s_{i}^{2}}\bigg)\frac{2s}{s_{i}}\pi\dim(V_{\chi})\Vol(X)P(s_{i})\\\notag
&\notag+\sum_{t_{k}}\sum_{i=1}^{N}\bigg(\prod_{\substack{j=1\\ j\neq i}}^{N}\frac{1}{s_{j}^{2}-s_{i}^{2}}\bigg)2s\frac{m(t_{k})}{t_{k}+s_{i}^{2}},\\
\end{align}
where $s_{1}=s$.
The term that contains the logarithmic derivative $L(s)$ in $\varPsi(s)$ is of the form
\begin{equation*}
 \bigg(\prod_{j=2}^{N}\frac{1}{s_{j}^{2}-s^{2}}\bigg)L(s).
\end{equation*}
The term of
\begin{equation*}
 \sum_{t_{k}}\sum_{i=1}^{N}\bigg(\prod_{\substack{j=1\\ j\neq i}}^{N}\frac{1}{s_{j}^{2}-s_{i}^{2}}\bigg)2s\frac{m(t_{k})}{t_{k}+s_{i}^{2}},
\end{equation*}
which is singular at $\pm i\sqrt{t_{k}}$, $k\in\N$ is 
\begin{equation*}
 \bigg(\prod_{j=2}^{N}\frac{1}{s_{j}^{2}-s^{2}}\bigg)2s\frac{m(t_{k})}{t_{k}+s^{2}}.
\end{equation*}
We multiply both sides of the 
equality (6.17) by
\begin{equation*}
 \prod_{j=2}^N(s_j^2-s^2).
\end{equation*}
Then, the residues of $L(s)$ at the points $\pm i\sqrt{t_{k}}$
are $m(t_{k})$, for $k\neq0$, and $2m(0)$ for $k=0$.

By (3.20), $L(s)$ decreases exponentially as Re$(s)\rightarrow \infty$. Hence, the integral
\begin{equation*}
 \int_{s}^{\infty}L(w)dw
\end{equation*}
over a path connecting $s$ and infinity is well defined and 
\begin{equation}
 \log Z(s;\sigma,\chi)=-\int_{s}^{\infty}L(w)dw.
\end{equation}
The integral above depends on the choice of the path, because $L(s)$
has singularities.
Since the residues of the singularities 
are integers, it follows that the 
exponential of the integral in the right-hand side of (6.18)
is independent of the choice of the path. The meromorphic continuation of the 
Selberg zeta function $Z(s;\sigma,\chi)$ to the whole complex plane follows.

\end{proof}

In view of the previous results, we will use the meromorphic continuation of the Selberg zeta function to obtain 
the meromorphic continuation of the Ruelle zeta function to the whole complex plane $\C$ .

Following the analysis in \cite[p. 93-94]{BO}, we
consider the identification $\mathfrak{a}_{\C}^{*}\cong \C\ni \lambda$. 
Let $\alpha>0$ be the unique positive
root of the system $(\mathfrak{g},\mathfrak{a})$. Let $\lambda\colon A\to \C^\times$ be the character,
defined by $\lambda(a)=e^{\alpha(\log a)}$. 

Let $\mathfrak{n}_{\C}$ be the complexification of the Lie algebra $\mathfrak{n}$.
Let $\nu_{p}:=\Lambda^{p}\Ad_{\mathfrak{n}_{\C}}(MA)$ be the representation of $MA$ in $\Lambda^{p}\mathfrak{n}_{\C}$ given by the $p$-th exterior power of the adjoint representation:
\begin{equation*}
 \nu_{p}:=\Lambda^{p}\Ad_{\mathfrak{n}_{\C}}\colon MA\rightarrow \GL(\Lambda^{p}\mathfrak{n}_{\C}),\quad p=0,1,\ldots,d-1.
\end{equation*}
For $p=0,1,\ldots,d-1$, let $J_{p}\subset\{(\psi_{p},\lambda)\colon\psi_{p}\in\widehat{M},\lambda\in\C\}$ be the subset consisting of all pairs of unitary irreducible representations of $M$ 
and one dimensional representations of $A$ such that, as $MA$-modules,
the representations $\nu_{p}$ decompose as
\begin{equation*}
 \Lambda^{p}\mathfrak{n}_{\C}=\bigoplus_{(\psi_{p},\lambda)\in J_{p}}V_{\psi_{p}}\otimes \C_{\lambda},
\end{equation*}
where $\C_{\lambda}\cong \C$ denotes the representation space of $\lambda$.

For $\sigma\in \widehat{M}$ we define  
\begin{equation}
 Z_{p}(s;\sigma,\chi):=\prod_{(\psi_{p},\lambda)\in J_{p}}Z(s+\rho-\lambda;\psi_{p}\otimes\sigma,\chi).
\end{equation}
We have then the following theorem, which gives a representation of $R(s;\sigma,\chi)$ as a product of $Z_{p}(s;\sigma,\chi)$ over $p$.
\begin{thm}
Let $\sigma\in\widehat{M}$. Then the Ruelle zeta function has the representation
\begin{equation}
 R(s;\sigma,\chi)=\prod_{p=0}^{d-1} Z_{p}(s;\sigma,\chi)^{(-1)^{p}}.
\end{equation}
\end{thm}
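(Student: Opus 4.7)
The plan is to verify the stated identity on the right half-plane where both sides converge absolutely, by matching their logarithms term by term, and then to extend it to all of $\mathbb{C}$ via the meromorphic continuation of each $Z_p(s;\sigma,\chi)$ which follows from Theorem 6.5 applied to each shifted Selberg factor.

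First I would substitute the expansion (3.14) into $\log Z(s+\rho-\lambda;\psi_p\otimes\sigma,\chi)$, sum over $(\psi_p,\lambda)\in J_p$, and use the $MA$-module decomposition $\Lambda^p\mathfrak{n}_{\C}=\bigoplus_{(\psi_p,\lambda)\in J_p}V_{\psi_p}\otimes\C_\lambda$ to recognise
\begin{equation*}
\sum_{(\psi_p,\lambda)\in J_p}\tr\psi_p(m_\gamma)\,e^{\lambda l(\gamma)}=\tr\Lambda^p\Ad(m_\gamma a_\gamma)|_{\mathfrak{n}_{\C}}.
\end{equation*}
Taking the alternating sum over $p$ and applying the universal identity $\sum_{p=0}^{d-1}(-1)^p\tr\Lambda^p A=\det(\Id-A)$ then gives
\begin{equation*}
\sum_{p=0}^{d-1}(-1)^p\log Z_p(s;\sigma,\chi)=-\sum_{[\gamma]\neq e}\frac{\tr\chi(\gamma)\,\tr\sigma(m_\gamma)\,e^{-(s+2|\rho|)l(\gamma)}}{n_\Gamma(\gamma)}\cdot\frac{\det(\Id-\Ad(m_\gamma a_\gamma)|_{\mathfrak{n}_{\C}})}{\det(\Id-\Ad(m_\gamma a_\gamma)|_{\overline{\mathfrak{n}}})}.
\end{equation*}
Expanding the infinite product defining $R$ as in the proof of Proposition 3.5 yields, on the other hand,
\begin{equation*}
\log R(s;\sigma,\chi)=(-1)^d\sum_{[\gamma]\neq e}\frac{\tr\chi(\gamma)\,\tr\sigma(m_\gamma)\,e^{-sl(\gamma)}}{n_\Gamma(\gamma)}.
\end{equation*}

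The proof is thus reduced to the purely Lie-theoretic identity
\begin{equation*}
\det(\Id-\Ad(m_\gamma a_\gamma)|_{\mathfrak{n}_{\C}})=(-1)^{d-1}e^{2|\rho|l(\gamma)}\det(\Id-\Ad(m_\gamma a_\gamma)|_{\overline{\mathfrak{n}}}).
\end{equation*}
To prove it, I would diagonalize $\Ad m_\gamma|_{\mathfrak{n}_{\C}}$ with eigenvalues $\zeta_1,\dots,\zeta_{d-1}$ of modulus one. Since $\Ad a_\gamma$ acts as $e^{l(\gamma)}$ on $\mathfrak{n}$ and as $e^{-l(\gamma)}$ on $\overline{\mathfrak{n}}$, and since the Cartan involution $\theta:\mathfrak{n}\to\overline{\mathfrak{n}}$ intertwines the actions of $\Ad m_\gamma$ (as $\theta m_\gamma=m_\gamma$), the eigenvalue multisets of $\Ad(m_\gamma a_\gamma)$ on $\mathfrak{n}$ and $\overline{\mathfrak{n}}$ are $\{e^{l(\gamma)}\zeta_j\}$ and $\{e^{-l(\gamma)}\zeta_j\}$ respectively. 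The factorisation $1-e^{l(\gamma)}\zeta_j=-e^{l(\gamma)}\zeta_j\,(1-e^{-l(\gamma)}\zeta_j^{-1})$, combined with $\prod_j\zeta_j=\det\Ad m_\gamma|_{\mathfrak{n}}=1$ (the connected group $M\subset K$ acts orthogonally on $\mathfrak{n}$ with respect to the inner product $-B(\,\cdot\,,\theta\,\cdot\,)$) and $\{\zeta_j^{-1}\}=\{\overline{\zeta_j}\}=\{\zeta_j\}$ as multisets (since $\Ad m_\gamma$ is the complexification of a real operator), collapses $\prod_j(1-e^{l(\gamma)}\zeta_j)$ to $(-1)^{d-1}e^{(d-1)l(\gamma)}\prod_j(1-e^{-l(\gamma)}\zeta_j)$. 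Using $2|\rho|=d-1$, this is the claim.

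Once the determinant identity is in hand, the ratio appearing in the expression for $\sum_p(-1)^p\log Z_p$ equals $(-1)^{d-1}$; since $d$ is odd, $(-1)^{d-1}=1=-(-1)^d$, so the two expansions of $\sum_p(-1)^p\log Z_p(s;\sigma,\chi)$ and $\log R(s;\sigma,\chi)$ coincide. Exponentiating gives the identity in the half-plane of absolute convergence, and meromorphic continuation extends it to all of $\C$. The main obstacle is the signed determinant identity above; once that is verified, the rest of the argument is essentially a rearrangement of the two logarithmic expansions.
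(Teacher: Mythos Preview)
Your argument is correct and follows essentially the same route as the paper's proof: expand each $\log Z(s+\rho-\lambda;\psi_p\otimes\sigma,\chi)$ via (3.14), collapse the alternating sum over $p$ using $\sum_p(-1)^p\tr\Lambda^p A=\det(\Id-A)$, and compare with the expansion (3.17) of $\log R(s;\sigma,\chi)$. The only substantive difference is that the paper simply asserts the determinant identity $\det(\Id-\Ad(m_\gamma a_\gamma)|_{\overline{\mathfrak{n}}})=(-1)^{d-1}a_\gamma^{-2\rho}\det(\Id-\Ad(m_\gamma a_\gamma)|_{\mathfrak{n}})$ as equation (6.22), whereas you supply an explicit eigenvalue proof of it; your version is therefore slightly more self-contained.
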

\begin{proof}
By (3.14), we have
\begin{equation}
 \log Z(s+\rho-\lambda;\psi_{p}\otimes\sigma,\chi)=-\sum_{[\gamma]\neq{e}}\frac{1}{n_{\Gamma}(\gamma)}
\tr(\chi(\gamma)\otimes\sigma(m_\gamma))\frac{e^{(-s-2\rho+\lambda)l(\gamma)}\tr(\psi_{p}(m))}{\det(1-\Ad(m_\gamma a_\gamma)|_{\overline{\mathfrak{n}}})}.
\end{equation}
We use now the fact that
\begin{equation}
 {\det(1-\Ad(m_\gamma a_\gamma)|_{\overline{\mathfrak{n}}})}=(-1)^{d-1}a_{\gamma}^{-2\rho}{\det(1-\Ad(m_\gamma a_\gamma)|_{{\mathfrak{n}}})}.
\end{equation}
Hence, if we insert (6.22) in (6.21), we get
\begin{equation}
 \log Z(s+\rho-\lambda;\psi_{p}\otimes\sigma,\chi)=(-1)^{d}\sum_{[\gamma]\neq{e}}\frac{1}{n_{\Gamma}(\gamma)}
\tr(\chi(\gamma)\otimes\sigma(m_\gamma))\frac{e^{-sl(\gamma)} e^{\lambda l(\gamma)}\tr(\psi_{p}(m))}{\det(1-\Ad(m_\gamma a_\gamma)|_{{\mathfrak{n}}})}.
\end{equation}
We have
\begin{align}
\log \prod_{p=0}^{d-1}Z_{p}(s;\sigma,\chi)^{(-1)^{p}}=\notag&\sum_{p=0}^{d-1}\log Z_{p}(s;\sigma,\chi)^{(-1)^{p}}\\\notag
&=\sum_{p=0}^{d-1}{(-1)^{p}}\log Z_{p}(s;\sigma,\chi)\\\notag
&=\sum_{p=0}^{d-1}(-1)^{p}\log\prod_{(\psi_{p},\lambda)\in J_{p}}Z(s+\rho-\lambda;\psi_{p}\otimes\sigma,\chi)\\\notag
&=\sum_{p=0}^{d-1}(-1)^{p} \sum_{(\psi_{p},\lambda)\in J_{p}}\log \bigg(Z(s+\rho-\lambda;\psi_{p}\otimes\sigma,\chi)\bigg)\\\notag
&=\sum_{p=0}^{d-1}(-1)^{p}\bigg((-1)^{d}\sum_{[\gamma]\neq{e}}\frac{1}{n_{\Gamma}(\gamma)}\tr(\chi(\gamma)\otimes\sigma(m_\gamma))e^{-sl(\gamma)}\\\notag
&\frac{\sum_{(\psi_{p},\lambda)\in J_{p}} e^{\lambda l(\gamma)}\tr(\psi_{p}(m))}{\det(1-\Ad(m_\gamma a_\gamma)|_{{\mathfrak{n}}})}\bigg ),\\
\end{align}
where in the third line we used the definition (6.19), and  in the last line, equation (6.23).
We recall now the that for any endomorphism of a finite dimensional vector space $W$, we have
\begin{equation*}
 \det(\Id_{W}-W)=\sum_{p=0}^{\infty}(-1)^{p}\tr(\Lambda^{p}W).                                                                  
\end{equation*}
If we apply this identity to $\Ad(m_\gamma a_\gamma)|_{{\mathfrak{n}}}$ we get
\begin{equation*}
\sum_{p=0}^{d-1}(-1)^{p} \frac{\sum_{(\psi_{p},\lambda)\in J_{p}} e^{\lambda l(\gamma)}\tr(\psi_{p}(m))}{\det(1-\Ad(m_\gamma a_\gamma)_{{\mathfrak{n}}})}=1.
\end{equation*}
Hence, by (6.24) we have
\begin{align*}
\log \prod_{p=0}^{d-1}Z_{p}(s;\sigma,\chi)^{(-1)^{p}}&=(-1)^{d}\sum_{[\gamma]\neq{e}}\frac{1}{n_{\Gamma}(\gamma)}\tr(\chi(\gamma)\otimes\sigma(m_\gamma)){e^{-sl(\gamma)}}\\
&=\log R(s;\sigma,\chi),
\end{align*}
where in the last line we used by equation (3.17) in the proof of Proposition 3.5.
\end{proof}
\begin{thm}
For every $\sigma\in\widehat{M}$, the Ruelle zeta function $R(s;\sigma,\chi)$ admits a meromorphic continuation to the whole complex plane $\C$.
\end{thm}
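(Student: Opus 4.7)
The plan is to deduce the meromorphic continuation of $R(s;\sigma,\chi)$ from the meromorphic continuation of $Z(s;\sigma,\chi)$ already established in Theorem 6.5, together with the product identity of Theorem 6.6. In view of that identity,
\[
R(s;\sigma,\chi)=\prod_{p=0}^{d-1} Z_{p}(s;\sigma,\chi)^{(-1)^{p}},\qquad Z_{p}(s;\sigma,\chi)=\prod_{(\psi_{p},\lambda)\in J_{p}}Z(s+\rho-\lambda;\psi_{p}\otimes\sigma,\chi),
\]
the entire problem reduces to showing that each factor $Z(s+\rho-\lambda;\psi_{p}\otimes\sigma,\chi)$ admits a meromorphic continuation to $\C$, since a finite product and finite quotient of meromorphic functions on $\C$ is again meromorphic.

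The only mild subtlety is that Theorem 6.5 is formulated for $\sigma\in\widehat{M}$, while the representation $\psi_{p}\otimes\sigma$ appearing in $Z_{p}$ is generally not irreducible. To handle this, I would first decompose the finite dimensional $M$-representation $\psi_{p}\otimes\sigma$ into irreducibles, $\psi_{p}\otimes\sigma\cong\bigoplus_{j}\sigma_{j}$ with $\sigma_{j}\in\widehat{M}$. The definition of the twisted Selberg zeta function in Definition 3.1 depends on the representation of $M$ only through the characters $\tr(\sigma(m_{\gamma}))$ appearing in the Euler factors, and $\det$ is multiplicative with respect to direct sums, so that
\[
Z(s+\rho-\lambda;\psi_{p}\otimes\sigma,\chi)=\prod_{j}Z(s+\rho-\lambda;\sigma_{j},\chi).
\]
(The argument of Proposition 3.4, which only used $|\tr\sigma(m_{\gamma})|\le\dim\sigma$, still guarantees absolute convergence of each factor on a common right half-plane.)

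Having reduced to the case of irreducible $\sigma_{j}\in\widehat{M}$, I apply Theorem 6.5 to each $Z(s+\rho-\lambda;\sigma_{j},\chi)$: each one extends meromorphically to $\C$, with singularities at $\pm i\sqrt{t_{k}}-\rho+\lambda$ where $t_{k}\in\spec(A^{\sharp}_{\chi}(\sigma_{j}))$. Substituting these meromorphic continuations back into the definition of $Z_{p}(s;\sigma,\chi)$ and then into the product over $p=0,\ldots,d-1$ in Theorem 6.6 yields a meromorphic function on all of $\C$ that agrees with $R(s;\sigma,\chi)$ on the half-plane of absolute convergence $\Re(s)>r$ of Proposition 3.5. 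Uniqueness of analytic continuation then identifies this function as the desired meromorphic extension of $R(s;\sigma,\chi)$.

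There is no serious obstacle here; the substantive analytic work was carried out in Theorems 6.5 and 6.6. The only point worth being careful about is the bookkeeping around $\psi_{p}\otimes\sigma$: verifying the multiplicative factorization of the Selberg zeta function under direct sums of $M$-representations, and checking that the index set $J_{p}$ is finite (which it is, since $\Lambda^{p}\mathfrak{n}_{\C}$ is finite dimensional), so that $Z_{p}(s;\sigma,\chi)$ is genuinely a finite product and no convergence issue arises in assembling the meromorphic continuation.
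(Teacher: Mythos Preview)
Your proposal is correct and follows essentially the same approach as the paper, whose proof is the single sentence ``The assertion follows from Theorem 6.5 together with Theorem 6.6.'' You add the extra care of decomposing $\psi_{p}\otimes\sigma$ into irreducibles before invoking Theorem 6.5, a point the paper leaves implicit; this is a reasonable clarification and does not change the argument.
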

\begin{proof}
 The assertion follows from  Theorem 6.5 together with Theorem 6.6.
\end{proof}
\bibliographystyle{amsalpha}
\bibliography{references3}
UNIVERSIT\"{A}T BONN, 		
MATHEMATISCHES INSTITUT, 	
ENDENICHER ALLE 60, 		
D-53115,			
GERMANY					\\
\textit{E-mail address}: xspilioti@math.uni-bonn.de

\end{document}